\numberwithin{equation}{section}
\newtheorem{main}{Theorem}
\newtheorem{thm}{Theorem}[section]
\newtheorem{lem}[thm]{Lemma}
\newtheorem{prop}[thm]{Proposition}
\newtheorem{cor}[thm]{Corollary}
\newtheorem{fact}[thm]{Fact}
\newtheorem*{meta}{Metaconjecture}
\newcommand{\bc}{{\bf c}}
\newcommand{\bu}{{\bf u}}
\newcommand{\bv}{{\bf v}}
\newcommand{\bx}{{\bf x}}
\newcommand{\by}{{\bf y}}
\newcommand{\bz}{{\bf z}}
\DeclareMathOperator{\Mod}{Mod^{\pm}}
\DeclareMathOperator{\mcg}{Mod}
\DeclareMathOperator{\SMod}{SMod^{\pm}}
\DeclareMathOperator{\Smcg}{SMod}
\DeclareMathOperator{\pmcg}{PMod}
\newcommand{\vR}{\mathcal{R}}
\newcommand{\vM}{\mathcal{M}}
\newcommand{\vG}{\mathcal{G}}
\newcommand{\vC}{\mathcal{C}}
\newcommand{\vN}{\mathcal{N}}
\newcommand{\vI}{\mathcal{I}}
\newcommand{\vSI}{\mathcal{SI}}
\newcommand{\vSN}{\mathcal{SN}}
\DeclareMathOperator{\Aut}{Aut}
\DeclareMathOperator{\Out}{Out}
\DeclareMathOperator{\Ex}{Ex}
\DeclareMathOperator{\Comm}{Comm}
\DeclareMathOperator{\Id}{Id}
\begin{document}
\title[Normal subgroups of braid groups]{Normal subgroups of the braid group and the metaconjecture of Ivanov}
\author{Alan McLeay}
\address{School of Mathematics and Statistics, University of Glasgow, Glasgow, G12 8QQ, Scotland}
\email{a.mcleay.1@research.gla.ac.uk}

\maketitle

\begin{abstract}
We show that many normal subgroups of the braid group modulo its centre, and of the mapping class group of a sphere with marked points, have the property that their automorphism and abstract commensurator groups are mapping class groups of such spheres. As one application, we establish the automorphism groups of each term in the lower central series and derived series of the pure braid group. We also obtain new proofs of results of Dyer-Grossman and Orevkov, showing that the automorphism groups of the braid group and of its commutator subgroup are isomorphic. We then calculate the automorphism and abstract commensurator groups of each term in the hyperelliptic Johnson filtration, recovering a result of Childers for the Torelli case. The techniques used in the paper rely on resolving a metaconjecture of Nikolai V. Ivanov for ``graphs of regions'', extending work of Brendle-Margalit.
\end{abstract}

\section{Introduction}\label{INTRO}

The \emph{braid group} on $n$-strands $B_n$ is generated by $\sigma_1, \dots, \sigma_{n-1}$ subject to the relations
\[
\sigma_i \sigma_j = \sigma_j \sigma_i \mbox{ for } |i-j|>1 \hspace{1cm} \mbox{ and } \hspace{1cm} \sigma_i \sigma_j \sigma_i = \sigma_j \sigma_i \sigma_j \mbox{ for } |i-j|=1.
\]
The latter are known as \emph{the braid relations}. The centre $Z$ of $B_n$ is generated by the element $(\sigma_1 \dots \sigma_{n-1})^n$. In this paper we give the automorphism group of any normal subgroup $N$ of $B_n$ such that $N \cap Z$ is trivial, providing $N$ contains an element that can be expressed product of at most one third of the generators given above. This result is given explicitly in Corollary \ref{BraidCOR}, the proof of which relies on an interpretation of $B_n/Z$ as a subgroup of the \emph{mapping class group} of a sphere with $n+1$ marked points.

\subsection*{Braid groups as mapping class groups.}
Let $\Sigma_{g,n}^b$ be an oriented surface of genus $g$ with $b$ boundary components and $n$ marked points. We can equivalently think of marked points as punctures. If the surface has no boundary components we write $\Sigma_{g,n}$. The focus of this paper will be spheres with marked points, that is, surfaces homeomorphic to $\Sigma_{0,n}$. We denote such a surface by $S_n$. The \emph{mapping class group} $\mcg (\Sigma_{g,n}^b)$ is the group whose elements are precisely all homotopy equivalence classes of orientation preserving self-homeomorphisms of the surface $\Sigma_{g,n}^b$ that fix the boundary pointwise. The \emph{extended mapping class group} $\Mod(\Sigma_{g,n})$ is the group of homotopy classes of all homeomorphisms, including the orientation-reversing ones.  The subgroup of $\mcg(\Sigma_{g,n}^b)$ consisting of all elements represented by homeomorphisms that fix a chosen marked point $p$ is denoted $\mcg(\Sigma_{g,n}^b,p)$.

Let $D_n$ be a disc with $n$ marked points, that is, a surface homeomorphic to $\Sigma_{0,n}^1$. One can define an isomorphism from $B_n$ to $\mcg(D_n)$ such that the image of each $\sigma_i$ is a half twist. By collapsing the boundary $\partial D_n$ to a point $p$, we see that $\mcg(S_{n+1},p)$ is isomorphic to $\mcg(D_n)$ modulo the subgroup generated by the Dehn twist about a curve isotopic $\partial D_n$. This Dehn twist generates the centre $Z$ of $B_n$ and so it follows that
\[
B_n / Z \cong \mcg(S_{n+1},p).
\]
Analogously to $\mcg(S_{n+1},p)$, we define the subgroup $\Mod(S_{n+1},p)$ of $\Mod(S_{n+1})$ consisting of all elements that fix the chosen marked point $p$. We can therefore define the following commutative diagram of inclusion maps:
\[
\begin{tikzcd}
& \mcg(S_{n+1},p) \arrow[hookrightarrow]{dr} \arrow[hookrightarrow]{dd} & \\
 \mcg(S_{n+1} ) \arrow[hookleftarrow]{ur} \arrow[hookrightarrow]{dr}&  & \Mod(S_{n+1},p) \\
 & \Mod(S_{n+1}) \arrow[hookleftarrow]{ur} & 
\end{tikzcd}
\]
Note that if $N$ is a subgroup of $\mcg(S_{n+1},p)$ that is normal in both $\mcg(S_{n+1})$ and $\Mod(S_{n+1},p)$ then it is also normal in $\Mod(S_{n+1})$. Furthermore, each subgroup in the diagram is of finite index. We define the injective homomorphism
\[
\Psi : B_n / Z \hookrightarrow \Mod(S_{n+1})
\]
by the isomorphism and inclusion maps discussed above. We use this interpretation to obtain the following result.

\begin{main}\label{BraidTHM}
Let $N$ be a normal subgroup of $B_n / Z$ containing an element represented by a product of at most $(n-1)/3$ standard generators. Then $\Aut N$ is isomorphic the normaliser of $\Psi(N)$ in $\Mod(S_{n+1})$.
\end{main}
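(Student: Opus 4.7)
The plan is to construct mutually inverse homomorphisms between $\Aut N$ and $\vN := \mathcal{N}_{\Mod(S_{n+1})}(\Psi(N))$. In one direction, since every $f \in \vN$ preserves $\Psi(N)$ by conjugation and $\Psi$ is injective, one obtains an automorphism $c_f$ of $N$; this defines a homomorphism
\[
c : \vN \longrightarrow \Aut N, \qquad f \longmapsto c_f.
\]
Injectivity of $c$ amounts to showing that the centraliser of $\Psi(N)$ in $\Mod(S_{n+1})$ is trivial. The hypothesis that $N$ contains an element represented by a product of at most $(n-1)/3$ standard generators forces $N$ to contain elements supported on small, movable subsurfaces of $S_{n+1}$; applying the Alexander method to enough of their conjugates, any element commuting with all of them must be trivial.

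The substantive content lies in the reverse direction. The approach is the one indicated in the abstract: associate to $N$ a graph of regions $\vG(N)$ purely from the group structure of $N$ (vertices corresponding to supports of ``small'' elements of $N$, edges detected via commutation of powers), in such a way that the metaconjecture for graphs of regions, proved earlier in the paper along the lines of Brendle--Margalit, identifies
\[
\Aut \vG(N) \;\cong\; \Mod(S_{n+1}).
\]
The quantitative hypothesis is used precisely here: the bound $(n-1)/3$ is what guarantees that $N$ meets enough mapping class group conjugacy classes of ``small'' elements for $\vG(N)$ to encode the full arc/curve combinatorics of $S_{n+1}$. Since the vertices and edges of $\vG(N)$ are defined intrinsically in $N$, every $\varphi \in \Aut N$ induces an automorphism of $\vG(N)$, yielding
\[
\rho : \Aut N \longrightarrow \Aut \vG(N) \cong \Mod(S_{n+1}).
\]
Tracing through the identification, one sees that $\rho(\varphi)\,x\,\rho(\varphi)^{-1} = \varphi(x)$ for every $x \in N$, so in particular $\rho(\varphi)$ normalises $\Psi(N)$ and lands in $\vN$. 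This identity also gives $c \circ \rho = \Id_{\Aut N}$; combined with injectivity of $c$ it forces $\rho \circ c = \Id_{\vN}$.

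The main obstacle is the step $\Aut \vG(N) \cong \Mod(S_{n+1})$: verifying that, under the threshold $(n-1)/3$, the abstract graph $\vG(N)$ built from $N$ really is rich enough to be rigid. This is the Ivanov metaconjecture for graphs of regions referenced in the abstract, and it is the technical backbone on which the rest of the argument rests. Once it is available, the proof reduces to the bookkeeping above: defining the two maps, checking that conjugation intertwines them, and invoking triviality of the centraliser of $\Psi(N)$ to close the loop.
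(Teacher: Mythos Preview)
Your plan is essentially the paper's own: define the conjugation map from the normaliser into $\Aut N$, build a graph of regions from $N$ so that the metaconjecture gives a map back into $\Mod(S_{n+1})$, and check the composites are identities. Two technical points you gloss over are handled with care in the paper and are not mere bookkeeping.

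First, supports are only well-defined for \emph{pure} mapping classes, and $N$ need not be pure. The paper therefore passes to $N\cap P$ for a pure finite-index subgroup $P$, routes the argument through $\Comm N\cong\Comm(N\cap P)$, and takes as vertices the supports of \emph{basic subgroups} (minimal non-abelian subgroups under a centraliser ordering) rather than of individual small elements. When $N$ is only normal in $\mcg(S_{n+1},p)$, the $\Mod(S_{n+1})$-orbit of these supports may include regions not realised by basic subgroups of $N$, so the graph is not literally intrinsic to $N$; the action of an abstract automorphism on those extra vertices is defined indirectly via links.

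Second, your claim that ``tracing through the identification'' yields $\rho(\varphi)\,x\,\rho(\varphi)^{-1}=\varphi(x)$ for every $x\in N$ is the heart of the matter, not a formality. The graph argument only gives this on a finite-index subgroup (so $[\alpha_f]=[\alpha]$ in $\Comm N$). Upgrading to equality in $\Aut N$ requires a separate step: for a pseudo-Anosov $h\in N$ one shows $f^{-1}\alpha(j)^{-1}fj$ commutes with a power of $h$, hence fixes its foliation, and varying $h$ over $N$-conjugates forces this element to be trivial. Without this, you have not shown $\rho$ lands in $\Aut N$-inverses of $c$, only in $\Comm N$-inverses.
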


We see then that $\Aut N$ is isomorphic to $\mcg(S_{n+1},p)$, $\mcg(S_{n+1})$, $\Mod(S_{n+1},p)$, or $\Mod(S_{n+1})$. As an example, we can apply Theorem \ref{BraidTHM} to the normal subgroup $\mathcal{BI}/Z$, where $\mathcal{BI}$ is the \emph{braid Torelli group} (see \cite{BMP}). It follows that $\Aut \mathcal{BI}/Z \cong \Mod(S_{n+1})$. Furthermore, this is also true for each of the congruence subgroups of $\mathcal{BI}$ modulo the centre $Z$.

We can also apply Theorem \ref{BraidTHM} to the group $B_n / Z$ itself. We see that $\Aut B_n / Z \cong \Mod(S_{n+1},p)$. In fact, this can be shown using the fact that $\mcg(S_{n+1},p)$ is a finite index subgroup of $\mcg(S_{n+1})$ as discussed in Section \ref{CurveGraph}. Considering the braid relations, and the fact that $Z \cong \mathbb{Z}$, one is able to prove that the natural homomorphism
\[
\Aut B_n \rightarrow \Aut B_n / Z
\]
is an isomorphism. The subgroup $\Mod(S_{n+1},p)$ can be generated by elements of $\mcg(S_{n+1},p)$ and a single orientation reversing element of $\Mod(S_{n+1},p)$ and so we therefore recover the following isomorphism of Dyer-Grossman \cite{DG}.
\begin{cor}\label{DG}
If $n \ge 4$ then
\[
\Aut B_n \cong \Mod(S_{n+1},p) \cong B_n / Z \rtimes \mathbb{Z} / 2 \mathbb{Z}.
\]
\end{cor}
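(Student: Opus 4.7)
The plan is to combine three ingredients: Theorem \ref{BraidTHM} applied to $N = B_n/Z$, the computation of the normalizer of $\mcg(S_{n+1},p)$ inside $\Mod(S_{n+1})$, and a semidirect-product splitting of $\Mod(S_{n+1},p)$.

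First I would verify the hypothesis of Theorem \ref{BraidTHM} for $N = B_n/Z$: the image of $\sigma_1$ is represented by one standard generator, and $1 \le (n-1)/3$ exactly when $n \ge 4$, so the theorem applies and yields $\Aut(B_n/Z) \cong N_{\Mod(S_{n+1})}(\Psi(B_n/Z))$. To identify this normalizer, note that $\Psi(B_n/Z) = \mcg(S_{n+1},p)$ and that $\mcg(S_{n+1})$ is normal of index two in $\Mod(S_{n+1})$; hence for any $\phi \in \Mod(S_{n+1})$ one has $\phi\,\mcg(S_{n+1},p)\,\phi^{-1} = \mcg(S_{n+1},\phi(p))$. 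Since a half-twist distinguishes the stabilisers of distinct marked points, equality forces $\phi(p)=p$, so the normalizer equals $\Mod(S_{n+1},p)$ and $\Aut(B_n/Z) \cong \Mod(S_{n+1},p)$.

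Next I would promote this isomorphism to $\Aut B_n$. Because $Z$ is characteristic, the natural map $\Aut B_n \to \Aut(B_n/Z)$ is well-defined. For injectivity, any $\phi$ that descends to the identity modulo $Z$ must send $\sigma_i \mapsto \sigma_i z_i$ with $z_i \in Z$; substituting into $\sigma_i\sigma_{i+1}\sigma_i = \sigma_{i+1}\sigma_i\sigma_{i+1}$ and using centrality of $Z$ forces $z_i = z_{i+1}$, so all $z_i$ agree with a common $z = w^{nk}$ where $w = \sigma_1\cdots\sigma_{n-1}$. The induced map on $B_n^{\mathrm{ab}} \cong \mathbb{Z}$ is then multiplication by $1+kn(n-1)$, which can be $\pm 1$ only when $k=0$ (using $n\ge 3$). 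Surjectivity is a standard lifting argument: for $\psi \in \Aut(B_n/Z)$ choose lifts of each $\psi(\bar\sigma_i)$ in $B_n$ and correct the central ambiguity, which lives in $Z \cong \mathbb{Z}$, so that both families of braid relations hold. Combining this with the previous step gives $\Aut B_n \cong \Mod(S_{n+1},p)$.

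Finally, to obtain the split form I would exhibit an orientation-reversing involution in $\Mod(S_{n+1},p)$. Placing the $n+1$ marked points symmetrically along a great circle containing $p$, reflection across that circle is such an involution $\iota$, fixing $p$ and preserving the marked-point set. Since the orientation homomorphism $\Mod(S_{n+1},p) \twoheadrightarrow \mathbb{Z}/2\mathbb{Z}$ has kernel $\mcg(S_{n+1},p) \cong B_n/Z$, the element $\iota$ splits the sequence and yields $\Mod(S_{n+1},p) \cong B_n/Z \rtimes \mathbb{Z}/2\mathbb{Z}$. The most delicate step is the surjectivity of $\Aut B_n \to \Aut(B_n/Z)$, since perturbing a single lift by an element of $Z$ affects every braid relation simultaneously; the other steps are either formal or reduce directly to Theorem \ref{BraidTHM}.
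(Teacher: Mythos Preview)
Your approach is essentially the same as the paper's: apply Theorem~\ref{BraidTHM} to $N=B_n/Z$, identify the normaliser as $\Mod(S_{n+1},p)$, show $\Aut B_n \to \Aut(B_n/Z)$ is an isomorphism via the braid relations and $Z\cong\mathbb{Z}$, and split off the orientation-reversing involution. Your write-up is in fact more detailed than the paper's, which only sketches these steps in the introduction.

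One remark on the step you flag as delicate. The surjectivity of $\Aut B_n \to \Aut(B_n/Z)$ does not actually require any correction of lifts: for arbitrary lifts $\tau_i$ of $\psi(\bar\sigma_i)$, both $[\tau_i,\tau_j]$ (for $|i-j|>1$) and $\tau_i\tau_{i+1}\tau_i(\tau_{i+1}\tau_i\tau_{i+1})^{-1}$ lie in $Z\cap[B_n,B_n]$, which is trivial (this is exactly the fact used for Corollary~\ref{Commutator}), so every relation holds on the nose. Adjusting by central elements would not change commutators anyway, so your proposed ``correction'' mechanism could not have worked; fortunately it is unnecessary. Alternatively, as the paper hints, one can bypass this by noting that $\Mod(S_{n+1},p)$ is generated by inner automorphisms of $B_n/Z$ together with the single involution $\bar\sigma_i\mapsto\bar\sigma_i^{-1}$, both of which obviously lift to $B_n$.
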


If $N$ is any normal subgroup of $B_n$ such that $N \cap Z = \{1\}$ then $N$ is isomorphic to a normal subgroup of $B_n / Z$. It follows that Theorem \ref{BraidTHM} also gives the automorphism groups of subgroups of this type.

\begin{cor}\label{BraidCOR}
Let $N$ be a normal subgroup of $B_n$ containing an element represented by a product of at most $(n-1)/3$ standard generators. If $N \cap Z = \{1\}$ then $\Aut N$ is isomorphic to the normaliser of $\Psi(N)$ in $\Mod(S_{n+1})$.
\end{cor}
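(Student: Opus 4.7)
The plan is to reduce Corollary \ref{BraidCOR} directly to Theorem \ref{BraidTHM} via the quotient map $\pi : B_n \to B_n/Z$. First I would observe that the hypothesis $N \cap Z = \{1\}$ means precisely that $\ker(\pi|_N) = N \cap Z$ is trivial, so $\pi$ restricts to an injective homomorphism on $N$ and $N \cong \pi(N)$. In particular, there is an induced isomorphism $\Aut N \cong \Aut \pi(N)$.

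Next I would verify that $\pi(N)$ satisfies the hypotheses of Theorem \ref{BraidTHM}. Normality is automatic: if $N \trianglelefteq B_n$ and $\pi$ is surjective then $\pi(N) \trianglelefteq B_n / Z$. For the generator-length condition, let $w \in N$ be represented by a product of at most $(n-1)/3$ of the standard generators $\sigma_1, \dots, \sigma_{n-1}$; then $\pi(w) \in \pi(N)$ is a product of at most $(n-1)/3$ of the images $\pi(\sigma_i)$, which are the standard generators of $B_n/Z$. Thus $\pi(N)$ is a normal subgroup of $B_n/Z$ containing a suitably short element, and Theorem \ref{BraidTHM} applies.

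Applying the theorem yields $\Aut \pi(N) \cong N_{\Mod(S_{n+1})}(\Psi(\pi(N)))$. Interpreting $\Psi(N)$ in the corollary as the image of $N$ under the composition $N \hookrightarrow B_n \to B_n/Z \xrightarrow{\Psi} \Mod(S_{n+1})$ (which makes sense exactly because $N \cap Z = \{1\}$), this is the same as $\Psi(\pi(N))$. Combining with the isomorphism $\Aut N \cong \Aut \pi(N)$ from the first step gives the claimed identification.

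In terms of obstacles, there is essentially no new mathematical content beyond Theorem \ref{BraidTHM}; the only subtle point to handle carefully is that $\pi(N)$ could a priori have shorter generator expressions than $N$ does, but we only need the existence of some short element, and the image of a short product of standard generators is itself a short product. The proof is therefore a short verification that the hypotheses transfer across $\pi$.
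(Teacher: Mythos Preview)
Your reduction is correct and matches the paper's own argument. The paper does not give a separate proof of Corollary~\ref{BraidCOR}; it simply remarks that if $N \cap Z = \{1\}$ then $N$ is isomorphic to a normal subgroup of $B_n/Z$, so Theorem~\ref{BraidTHM} applies directly --- exactly the content of your proposal, with your additional care in tracking the generator-length hypothesis and the meaning of $\Psi(N)$.
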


In particular, the commutator subgroup $[B_n, B_n]$ has trivial intersection with the centre, leading to the following result.
\begin{cor}\label{Commutator}
If $n \ge 7$ then
\[
\Aut [B_n,B_n] \cong \Mod(S_{n+1},p) \cong B_n / Z \rtimes \mathbb{Z} / 2 \mathbb{Z}.
\]
\end{cor}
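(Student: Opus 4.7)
The plan is to apply Corollary \ref{BraidCOR} directly to $N = [B_n, B_n]$, for which three hypotheses must be verified: normality in $B_n$, trivial intersection with the centre, and the existence of a suitably short element. Normality is automatic. For the intersection, $Z$ is generated by $\Delta^2 = (\sigma_1 \cdots \sigma_{n-1})^n$, whose image in the abelianization $B_n^{\mathrm{ab}} \cong \mathbb{Z}$ is $n(n-1)$; since this is nonzero, no nontrivial element of $Z$ lies in the commutator subgroup and $[B_n,B_n] \cap Z = \{1\}$. For the word-length condition, note that $\sigma_1 \sigma_2^{-1}$ maps to $0$ in the abelianization and hence is a length-two product of standard generators lying in $[B_n, B_n]$; the bound $(n-1)/3 \geq 2$ is exactly the hypothesis $n \geq 7$.

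Corollary \ref{BraidCOR} then yields $\Aut [B_n,B_n] \cong N_{\Mod(S_{n+1})}(\Psi([B_n,B_n]))$. Because $\Psi$ factors through the surjection $B_n \twoheadrightarrow B_n/Z \xrightarrow{\sim} \mcg(S_{n+1}, p)$ and its kernel meets $[B_n, B_n]$ trivially, the image $\Psi([B_n, B_n])$ coincides with the commutator subgroup $C := [\mcg(S_{n+1}, p), \mcg(S_{n+1}, p)]$. Once $N_{\Mod(S_{n+1})}(C)$ has been identified with $\Mod(S_{n+1}, p)$, the decomposition $\Mod(S_{n+1}, p) \cong B_n/Z \rtimes \mathbb{Z}/2\mathbb{Z}$ obtained in Corollary \ref{DG} completes the statement.

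The main obstacle will be this normaliser computation. The inclusion $\Mod(S_{n+1}, p) \subseteq N_{\Mod(S_{n+1})}(C)$ is formal, since $\Mod(S_{n+1}, p)$ normalises its index-two subgroup $\mcg(S_{n+1}, p)$ and $C$ is characteristic in $\mcg(S_{n+1}, p)$. For the reverse inclusion I will argue by contradiction: if $f \in \Mod(S_{n+1})$ normalises $C$ and sends $p$ to a distinct marked point $q$, then conjugation by $f$ carries $\mcg(S_{n+1}, p)$ onto $\mcg(S_{n+1}, q)$, forcing $C = [\mcg(S_{n+1}, q), \mcg(S_{n+1}, q)]$, a group that fixes $q$. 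It therefore suffices to exhibit an element of $C$ that moves $q$. For $n \geq 7$ one can choose two further distinct marked points $r, s \notin \{p, q\}$ and take the commutator $[H_{qr}, H_{rs}]$ of two half-twists, each of which fixes $p$ and so lies in $\mcg(S_{n+1}, p)$; its induced permutation of the marked points is a nontrivial $3$-cycle on $\{q, r, s\}$, which moves $q$ as required.
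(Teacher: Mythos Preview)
Your proof is correct and follows the same approach as the paper: apply Corollary~\ref{BraidCOR} after checking that $[B_n,B_n]$ meets $Z$ trivially and contains a product of two generators, then identify the normaliser. The paper only sketches this, asserting without argument that the normaliser is $\Mod(S_{n+1},p)$; you have supplied the missing justification via the commutator-of-half-twists argument, which is a clean way to see that $C$ does not fix any marked point other than $p$. One cosmetic point: the element $\sigma_1\sigma_2^{-1}$ uses an inverse rather than a generator per se, but the paper's phrasing is equally loose, and what is really at stake (as the paper's discussion of smallness makes clear) is that the support lies in a disc with three marked points, which your element satisfies.
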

This isomorphism was originally proved by Orevkov for $n \ge 4$ using more algebraic methods \cite{Orevkov}. We note that each term further down the lower central series and the derived series of the braid group is equal to $[B_n,B_n]$, see for example \cite{GorLin}. The pure braid group $PB_n$ is the kernel of the natural homomorphism from $B_n$ to the symmetric group on the set of $n$ elements. While $Z$ is also the centre of $PB_n$, it has trivial intersection with its commutator subgroup $[PB_n, PB_n]$. Since $PB_n$ is characteristic in $B_n$ we are able to establish the automorphism group of $[PB_n,PB_n]$ and in fact we arrive at a more general result.

\begin{cor}\label{PBG}
If $n \ge 7$ and $\Gamma$ is any term in the lower central series or derived series of $PB_n$ then
\[
\Aut \Gamma \cong \Aut PB_n / Z \cong \Mod(S_{n+1}).
\]
\end{cor}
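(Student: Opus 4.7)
The strategy is to apply Theorem \ref{BraidTHM} or Corollary \ref{BraidCOR} to each $\Gamma$, verifying the three hypotheses: normality in $B_n$, trivial intersection with $Z$ (for Corollary \ref{BraidCOR}), and existence of an element using at most $(n-1)/3$ of the standard generators. Normality is immediate: by Corollary \ref{DG}, every automorphism of $B_n$ is realised by an element of $\Mod(S_{n+1},p)$ that permutes squared half-twists among themselves, so the subgroup $PB_n$ generated by squared half-twists is characteristic in $B_n$. Consequently each term $\Gamma$ of the lower central series or derived series of $PB_n$ is characteristic in $PB_n$, and hence in $B_n$.

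For the support bound, restrict to $\langle \sigma_1, \sigma_2 \rangle \cong B_3$; the elements $\sigma_1^2, \sigma_2^2$ generate a free subgroup of rank two inside $PB_3$ (since they become free generators in the quotient $PB_3/Z(B_3) \cong F_2$), so the iterated commutators of $\sigma_1^2$ and $\sigma_2^2$ at the appropriate depth furnish, for each $\Gamma$, a nontrivial element written using only two of the standard generators, well within the bound $(n-1)/3 \ge 2$ for $n \ge 7$. When $\Gamma \subseteq [PB_n, PB_n]$, that is, for every term of either filtration after the first, the condition $\Gamma \cap Z = \{1\}$ follows from the statement $[PB_n, PB_n] \cap Z = \{1\}$ recorded in the text, and Corollary \ref{BraidCOR} yields $\Aut \Gamma \cong N_{\Mod(S_{n+1})}(\Psi(\Gamma))$.

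To identify this normaliser with $\Mod(S_{n+1})$, note that $\Psi(PB_n/Z)$ coincides with the pure mapping class group $\pmcg(S_{n+1})$, which is normal in $\Mod(S_{n+1})$ as the kernel of the natural action on the marked points together with orientation. Conjugation by any element of $\Mod(S_{n+1})$ is therefore an automorphism of $\pmcg(S_{n+1})$, and such an automorphism preserves every characteristic subgroup of $\pmcg(S_{n+1})$, in particular $\Psi(\Gamma)$, which is a term of the lower central series or derived series of $\pmcg(S_{n+1}) \cong PB_n/Z$. This gives $\Aut \Gamma \cong \Mod(S_{n+1})$ for every $\Gamma \subseteq [PB_n, PB_n]$. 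For the remaining top case $\Gamma = PB_n$, Theorem \ref{BraidTHM} applied directly to $\Psi(PB_n/Z) = \pmcg(S_{n+1}) \subset \Mod(S_{n+1})$ gives $\Aut(PB_n/Z) \cong \Mod(S_{n+1})$, and the identification $\Aut PB_n \cong \Aut(PB_n/Z)$ follows by adapting the argument sketched in the text for $B_n$, using that $Z \cong \mathbb{Z}$ is the centre of $PB_n$. The subtlest step is this last lifting argument along the central extension $1 \to Z \to PB_n \to PB_n/Z \to 1$; everything else reduces to the characteristic hierarchy $\Gamma \subseteq PB_n \subseteq B_n$ and the free-group estimate that produces the short-support element.
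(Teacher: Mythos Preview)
Your argument for the terms $\Gamma \subseteq [PB_n,PB_n]$ is correct and is exactly the paper's approach: $PB_n$ is characteristic in $B_n$, so each $\Gamma$ is normal in $B_n$; the intersection $[PB_n,PB_n]\cap Z$ is trivial; and an iterated commutator of $\sigma_1^2,\sigma_2^2$ (which generate a free group in $PB_n/Z$) furnishes a nontrivial element in two generators, so Corollary~\ref{BraidCOR} applies once $n\ge 7$. Your identification of the normaliser as all of $\Mod(S_{n+1})$, via the fact that $\Psi(\Gamma)$ is characteristic in the normal subgroup $\pmcg(S_{n+1})$, is a detail the paper leaves implicit but which you handle cleanly.

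The genuine problem is your final paragraph treating $\Gamma = PB_n$. The claim that the natural map $\Aut PB_n \to \Aut(PB_n/Z)$ is an isomorphism, ``by adapting the argument sketched in the text for $B_n$'', is false. The argument for $B_n$ works because $B_n^{ab}\cong\mathbb{Z}$ forces any homomorphism $\chi:B_n\to Z$ with $g\mapsto g\,\chi(g)$ an automorphism to be trivial. For $PB_n$ the abelianisation is $\mathbb{Z}^{\binom{n}{2}}$, and there are many nontrivial $\chi:PB_n\to Z$ for which $g\mapsto g\,\chi(g)$ is an automorphism: take $\chi(A_{ij})=z^{c_{ij}}$ with $\sum c_{ij}=0$ but not all $c_{ij}$ zero; the induced map on $PB_n^{ab}$ then has determinant $1+\sum c_{ij}=1$, and the map is the identity on $[PB_n,PB_n]$, so it is a nontrivial automorphism of $PB_n$ that becomes trivial on $PB_n/Z$. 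Thus the kernel of $\Aut PB_n\to\Aut(PB_n/Z)$ is nontrivial and your lifting step fails. The corollary should be read as asserting $\Aut\Gamma\cong\Mod(S_{n+1})$ for the terms contained in $[PB_n,PB_n]$, together with the separate computation $\Aut(PB_n/Z)\cong\Mod(S_{n+1})$ coming directly from Theorem~\ref{BraidTHM}; it is not claiming anything about $\Aut PB_n$ itself.
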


Note that $[B_n,B_n]$, and every subgroup in the statement of Corollary \ref{PBG}, contain elements that can be written as a product of two standard generators. Therefore to apply Corollary \ref{BraidCOR} we require that $(n-1)/3 \ge 2$, that is, $n \ge 7$.

\subsection*{The hyperelliptic Johnson filtration.}
Let $\iota : \Sigma_{g,0} \rightarrow \Sigma_{g,0}$ be a fixed hyperelliptic involution of a genus $g>0$ surface with no marked points or boundary components. We abuse notation by writing $\iota$ for the element of $\mcg(\Sigma_{g,0})$ represented by this homeomorphism. Recall that $\Smcg(\Sigma_{g,0})$ is the \emph{hyperelliptic mapping class group} (or \emph{symmetric mapping class group}), that is, the subgroup of $\mcg(\Sigma_{g,0})$ consisting of all elements that commute with $\iota$.

Let $\Gamma_k$ denote the $k^{th}$ term in the lower central series of $\pi_1(\Sigma_{g,0})$. The \emph{Johnson filtration} of $\mcg(\Sigma_{g,0})$ is the sequence of groups $\vN_k(\Sigma_{g,0})$ defined to be the kernel of the homomorphisms
\[
\mcg(\Sigma_{g,0}) \rightarrow \Out(\Gamma / \Gamma_k).
\]
It has recently been shown by Brendle-Margalit for $g \ge 7$ that $\Aut \vN_k(\Sigma_{g,0}) \cong \Mod(\Sigma_{g,0})$ \cite{TD}. The first term in the Johnson filtration is the \emph{Torelli group} $\vI(\Sigma_{g,0})$, the subgroup of $\mcg(\Sigma_{g,0})$ consisting of elements acting trivially on homology. We define the \emph{hyperelliptic Torelli group} $\vSI(\Sigma_{g,0})$ to be the intersection $\Smcg(\Sigma_{g,0}) \cap \vI(\Sigma_{g,0})$. It is a result of Childers that
\[
\Aut \vSI(\Sigma_{g,0}) \cong \SMod(\Sigma_{g,0}) / \langle \iota \rangle,
\]
when $g \ge 3$ \cite{LRC}. Here, $\SMod(\Sigma_{g,0})$ is the \emph{extended hyperelliptic mapping class group}. Given these results we may ask the following natural question: what are the automorphism groups are of terms appearing further down the \emph{hyperelliptic Johnson filtration}? That is, the groups
\[
\vSN_k(\Sigma_{g,0}) = \Smcg(\Sigma_{g,0}) \cap \vN_k(\Sigma_{g,0}),
\]
for any $k$.  Recall that for any group $G$ we define $\Comm G$ to be the group of equivalence classes of isomorphisms between finite index subgroups of $G$, where two isomorphisms are equivalent if they agree on some finite index subgroup.

\begin{main}\label{HJF}
For all surfaces $\Sigma_{g,0}$ with $g \ge 6$ we have that
\[
\Comm \vSN_k(\Sigma_{g,0}) \cong \Aut \vSN_k(\Sigma_{g,0}) \cong \SMod(\Sigma_{g,0}) / \langle \iota \rangle.
\]
\end{main}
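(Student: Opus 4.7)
The plan is to reduce to the sphere mapping class group setting via the double branched cover $\Sigma_{g,0} \to S_{2g+2}$. For $g \geq 2$ there are exact sequences
\[
1 \to \langle \iota \rangle \to \Smcg(\Sigma_{g,0}) \to \mcg(S_{2g+2}) \to 1 \quad \text{and} \quad 1 \to \langle \iota \rangle \to \SMod(\Sigma_{g,0}) \to \Mod(S_{2g+2}) \to 1.
\]
Since $\iota$ acts as $-\Id$ on $H_1(\Sigma_{g,0})$ it belongs to no $\vN_k$, so $\vSN_k$ injects onto a subgroup $N_k$ of $\mcg(S_{2g+2})$. Each $\vN_k$ is characteristic in $\mcg(\Sigma_{g,0})$, hence $\vSN_k$ is normal in $\SMod(\Sigma_{g,0})$ and $N_k$ is normal in $\Mod(S_{2g+2})$. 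It therefore suffices to establish
\[
\Aut N_k \cong \Comm N_k \cong \Mod(S_{2g+2}),
\]
after which pulling back through the cover yields the theorem.

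To apply the graph-of-regions machinery of the paper (the same tool that powers Theorem \ref{BraidTHM}), we need $N_k$ to contain an element whose support in $S_{2g+2}$ is a disc enclosing a controlled, small number of marked points. Fix a symmetric separating simple closed curve $c \subset \Sigma_{g,0}$ bounding a $\iota$-invariant subsurface of small genus; one checks that the projection $\bar c \subset S_{2g+2}$ encloses a small odd number of marked points. The Dehn twist $T_c$ lies in $\vSI(\Sigma_{g,0})$, and indeed in $\vN_2$ since separating twists sit in the kernel of the first Johnson homomorphism. For larger $k$, we descend the filtration by taking iterated commutators of $T_c$ with other symmetric mapping classes supported in an $\iota$-invariant neighbourhood of $c$, using $[\vN_i, \vN_j] \subseteq \vN_{i+j}$ to reach $\vSN_k$ without enlarging the support. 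Projecting, this produces the required small-support element of $N_k$ in the regime $g \geq 6$.

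The graph-of-regions rigidity then identifies both $\Aut N_k$ and $\Comm N_k$ with the normaliser of $N_k$ in $\Mod(S_{2g+2})$, which by normality is all of $\Mod(S_{2g+2})$; the commensurator case follows in parallel since any isomorphism between finite-index subgroups of $N_k$ induces the same sort of graph automorphism as an abstract automorphism does. Pulling back through the double cover identifies this with $\SMod(\Sigma_{g,0})/\langle \iota \rangle$.

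The principal obstacle is the construction in the second paragraph: producing, for every $k$, an element of $\vSN_k$ whose projection has small disc support in $S_{2g+2}$. For $k \leq 2$ a symmetric separating twist suffices, but for large $k$ one must verify that the iterated commutators can be confined to an $\iota$-invariant subsurface of bounded genus, so that their projection still has support in a disc with only a few marked points and falls inside the $(m-1)/3$-style bound inherited from Corollary \ref{BraidCOR}.
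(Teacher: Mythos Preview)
Your approach is essentially the same as the paper's: reduce via Birman--Hilden to a normal subgroup $N_k$ of $\mcg(S_{2g+2})$, produce a small-support element of $N_k$, and then invoke Theorem~\ref{Proppy}. The paper resolves the obstacle you flag in your final paragraph by citing a construction of Ivanov to obtain, for every $k$, an element of $\vSN_k(\Sigma_{g,0})$ supported in an $\iota$-invariant genus-two subsurface with one boundary component; its image in $S_{2g+2}$ is then supported in a disc with five marked points, and the smallness condition $2g+2 \ge 3\cdot 5 - 1$ yields exactly $g \ge 6$. Your iterated-commutator idea (using $[\vN_i,\vN_j]\subseteq\vN_{i+j}$ inside a fixed symmetric subsurface) is in the same spirit, but as you note it does not by itself pin down the genus of the subsurface needed; the genus-two bound is what makes the numerics work out.
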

\noindent In fact, as we discuss later in this section, we can recover the better bound of $g \ge 3$ for the hyperelliptic Torelli group given by Childers \cite{LRC}.

\subsection{The metaconjecture of Ivanov}\label{CurveGraph}

The proofs of Theorem \ref{BraidTHM} and Theorem \ref{HJF} rely on the study of a wide class of graphs associated to spheres. A well known member of this class is the \emph{curve graph} of the sphere, defined by Harvey \cite{HAR}. The curve graph $\mathcal C (\Sigma_{g,n})$ has vertices corresponding to the isotopy classes of essential simple closed curves on the surface $\Sigma_{g,n}$. Two vertices are adjacent if they correspond to isotopy classes containing disjoint representatives.

For any surface $\Sigma_{g,n}$ such that $g \ge 2$, other than $\Sigma_{2,0}$, Ivanov proved that the natural homomorphism
\[
\eta : \Mod(\Sigma_{g,n}) \rightarrow \Aut \vC (\Sigma_{g,n})
\]
is an isomorphism \cite{Ivanov}. This result combined with work of Korkmaz \cite{MK} and Luo \cite{Luo} classified all surfaces for which this natural homomorphism is an isomorphism. In particular, Korkmaz proved this for spheres $S_n$ where $n \ge 5$. One of many applications of these results is that $\Aut \mcg (\Sigma_{g,n})$ is isomorphic to $\Mod (\Sigma_{g,n})$ for these surfaces.

A further implication is that each isomorphism between finite index subgroups of $\Mod(\Sigma_{g,n})$ is induced by a mapping class $f \in \Mod(\Sigma_{g,n})$. This fact has been used by Charney-Crisp to prove that the automorphism group of a finite index subgroup of $\Mod(S_n)$ is isomorphic to its normaliser in $\Mod(S_n)$ implying that the groups $\Aut B_n/Z$ and $\Mod(S_{n+1},p)$ are isomorphic \cite[Corollary 4 (2)]{ChCr}. The relationship between spheres with marked points and braid groups has also been studied by Leininger-Margalit \cite{LeinMarg} and Bell-Margalit \cite{BellMarg1}, \cite{BellMarg2}.

It has been shown that many other mathematical objects associated to surfaces besides the curve graph have automorphism group isomorphic to the extended mapping class group, such as the complex of non-separating curves \cite{Nonsep}, the complex of separating curves \cite{Sep}, the complex of domains \cite{MCP}, the truncated complex of domains \cite{MCP}, the arc complex \cite{Arc}, the arc and curve complex \cite{Arccurve}, and the complex of strongly separating curves \cite{Strongly}, the pants complex \cite{Pants}, the Torelli complex \cite{Kida}, and the ideal triangulation graph \cite{Triangulation} among others. Following these results Ivanov made a metaconjecture.

\begin{meta}[Ivanov \cite{Meta}]
Every object naturally associated to a surface $\Sigma_{g,n}$ and having a sufficiently rich structure has $\Mod ( \Sigma_{g,n} )$ as its group of automorphisms. Moreover, this can be proved by a reduction to the theorem about automorphisms of $\mathcal C (\Sigma_{g,n})$.
\end{meta}

Brendle-Margalit have defined a wide class of graphs upon which the group $\Mod (\Sigma_{g,0})$ acts naturally and resolved the metaconjecture for such graphs \cite{TD}. Indeed, they use these graphs to resolve the metaconjecture for many normal subgroups of $\Mod(\Sigma_{g,0})$. In this paper we extend this definition to graphs associated to spheres $S_n$ and resolve the metaconjecture for this class.

\subsection*{Regions and complementary discs.} A \emph{region} $R$ is a connected, compact subsurface of $S_n$ with at least one boundary component, where each such boundary component is an essential simple closed curve in $S_n$. We write $n(R)$ for the number of marked points in the region $R$. It follows that every region $R \subset S_n$ is homeomorphic to a surface $\Sigma_{0,n(R)}^b$ where $b>0$ and $n(R) \le n$. A \emph{complementary disc} of a region $R$ is a disc $D$ that is disjoint from $R$ and homotopic to a component of $S_n \setminus R$. Since the boundary components of a region are essential, it follows that each complementary disc contains at least two marked points.

We now define the set $\vR (S_n)$ consisting of the $\Mod (S_n)$-orbits of all regions of $S_n$. For any region $R$, if the $\Mod(S_n)$-orbit of $R$ belongs to $A \subset \vR(S_n)$ then we say that $R$ \emph{represents} an element of $A$, or that $R$ is \emph{represented} in $A$.

\subsection*{Graphs of regions.} Given a subset $A \subset \vR(S_n)$ we define the \emph{graph of regions}, written $\vG_A(S_n)$, to be a graph whose vertices correspond to the homotopy equivalence classes of regions that represent elements of $A$. Adjacency between two vertices occurs whenever the corresponding equivalence classes of regions contain disjoint representatives. We will say that a vertex $v$ \emph{corresponds} to a region $R$ if it corresponds to the equivalence class of $R$. The results in this paper can equivalently be stated in terms of the simplicial flag complexes admitted by graphs of regions.
\begin{figure}[t]
\centering
\includegraphics[scale=0.8]{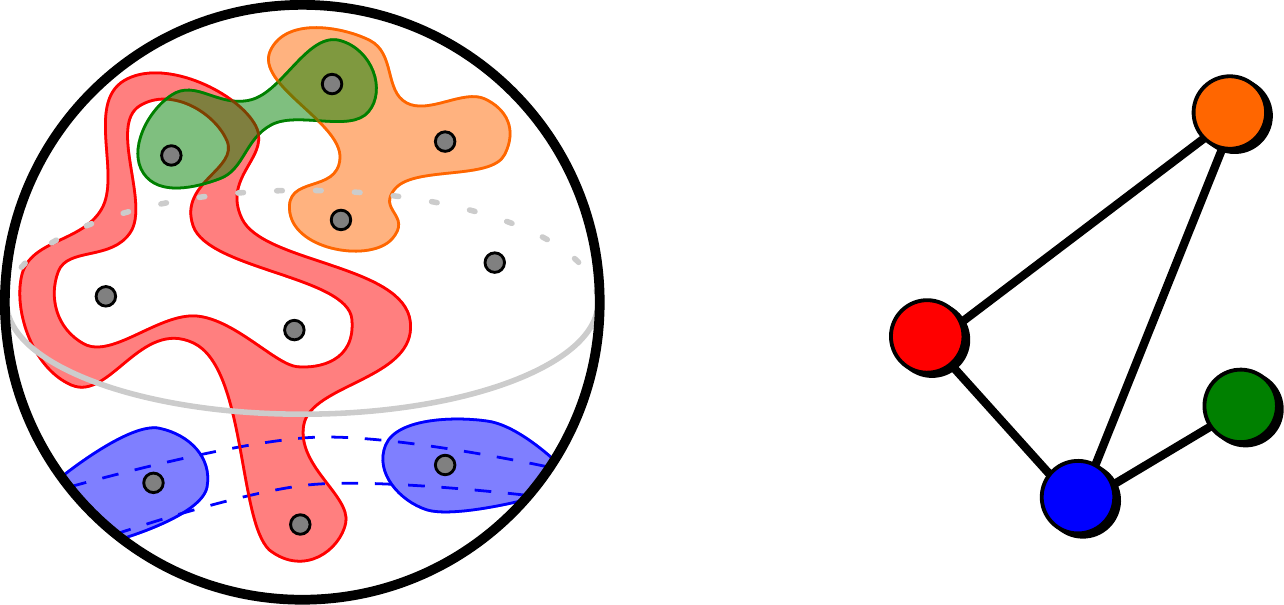}
\caption{Four vertices in a graph of regions.  Two vertices correspond to discs containing two marked points, one corresponds to a disc containing three marked points, and one vertex corresponds to an annulus containing one marked point bounding a disc containing three marked points.}
\label{GORfig}
\end{figure}
Note that if $A \subset \vR (S_n)$ is the set of annular subsurfaces of $S_n$ then $\vG_A(S_n) \cong \mathcal C (S_n)$. It follows then that the curve graph and many of its subgraphs can be interpreted as graphs of regions.

\subsection*{Exchange automorphisms.} Following McCarthy-Papadopoulos \cite{MCP}, we define $\phi \in \Aut \vG_A (S_n)$ to be an \emph{exchange automorphism} if there exist vertices $v_1, v_2 \in \vG_A (S_n)$ such that 
$\phi(v_1)=v_2$, $\phi(v_2)=v_1$ and $\phi(v)=v$ for all other vertices $v \not \in \{v_1, v_2\}$ of $\vG_{A} (S_n)$.

Brendle-Margalit showed for surfaces of the form $\Sigma_{g,0}$ that graphs of regions admitting exchange automorphisms are precisely those which fail to agree with the mataconjecture \cite[Theorem 2.2]{TD}.  This result carries over to spheres with marked points. We visit this fact in Section \ref{EA} using Brendle-Margalit's terminology of \emph{holes} and \emph{corks} which we now describe.

\subsection*{Holes and corks.}
We say that a vertex $v$ of $\vG_{A}(S_n)$ is a \emph{hole} if it corresponds to a region $R$ with complementary disc $D$ such that no subsurface of $D$ represents an element of $A$. Let $Q$ be a union of such complementary discs whose intersection with $R$ is a collection of annuli. The \emph{filling} of a hole $v$ is defined up to homotopy to be the region $R' = R \cup Q$.

We say a vertex $v$ is a \emph{cork} if it corresponds to an annulus $R$ with a complementary disc $D$ representing an element of $A$ such that the only subsurfaces of $D$ representing an element of $A$ are homotopic to $R$. If $u$ is the vertex corresponding to $D$ we call $u$ and $v$ a \emph{cork pair}.

\subsection*{Examples.}
We give an example of holes with equal fillings and cork pairs in Figure \ref{CorkHole}. Let $A \subset \vR (S_n)$ be the subset represented by all regions except discs containing two marked points and the annular boundaries of such discs. The two regions $R_1$ and $R_2$ in Figure \ref{CorkHole}(i) each have a complementary disc containing two marked points. The only regions that are contained in such discs are precisely those which are excluded from the subset $A$ and so both $R_1$ and $R_2$ correspond to holes in $\vG_{A}(S_n)$. The fillings of these holes are equal and homeomorphic to a disc with three marked points. Note that the filling of a hole is not necessarily a region represented in $A$.

Now, consider $B \subset \vR (S_n)$ to be the subset represented by all regions except discs containing two marked points. The regions $R_1$ and $R_2$ do not represent holes in $\vG_{B}(S_n)$ as their complementary discs contain regions homotopic to their annular boundaries, which are represented in $B$.

The annular region $R_1$ shown in Figure \ref{CorkHole}(ii) has a complementary disc $R_2$ containing three marked points. Since $R_2$ represents an element of $A$ and the only proper, non-peripheral regions $R_2$ are those which are excluded from $A$ it follows that $R_1$ corresponds to a cork in $\vG_{A}(S_n)$. Again, $R_1$ does not correspond to a cork in $\vG_{B}(S_n)$ as its complementary disc $R_2$ contains infinitely many annuli that represent elements of $B$.

\begin{figure}[t]
\centering
\labellist \hair 1pt
	\pinlabel {(i)} at 130 -7
	\pinlabel {(ii)} at 415 -7
    \endlabellist
\includegraphics[scale=.8]{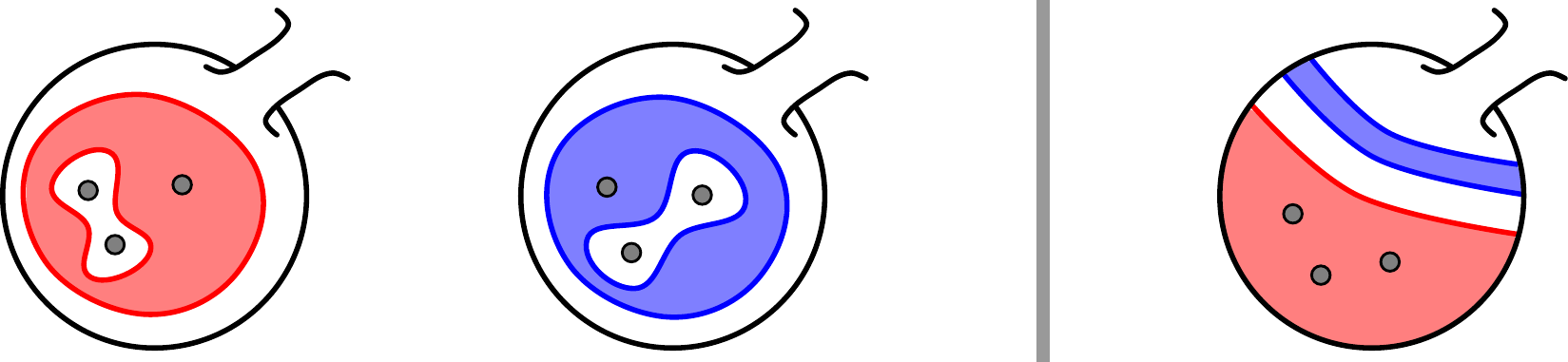}
\caption{(i) Vertices of $\vG_A(S_n)$ corresponding to these two regions are holes with equal fillings if no subsurface of $D_2$ is represented in $A$. (ii) Vertices of $\vG_A(S_n)$ corresponding to these two regions are a cork pair if no subsurface of $D_2$ is represented in $A$.}
\label{CorkHole}
\end{figure}

\subsection*{The natural homomorphism.} For any subset $A \subset \vR(S_n)$ there exists a natural homomorphism
\[
\eta_A \ :\ \Mod(S_n) \rightarrow \Aut \vG_{A}(S_n).
\]
Let $v$ be a vertex of $\vG_A(S_n)$ corresponding to a region $R$. For any $f \in \Mod(S_n)$ we define $\eta_A(f)(v) := u$ where $u$ is the vertex corresponding to the region $f(R)$.

\subsection*{Enveloping discs and smallness.}
An \emph{enveloping disc} $D_R$ of a region $R$ is a disc such that $R \subset D_R$ and for any other disc $D$ satisfying $R \subset D$ we have that $n(D_R) \le n(D)$.

Let $R$ be a region represented in some subset $A \subset \vR(S_n)$. We say that the vertex of $\vG_{A}(S_n)$ corresponding to $R$ is \emph{small} if $3 n(D_R) -1 \le n$. Similarly, we say a mapping class is \emph{small} in $\mcg(S_n) $if its support is a region $R$ such that $3 n(D_R) - 1 \le n$.  Finally, an element of $\mcg(S_n,p)$ is \emph{small} in $\mcg(S_n,p)$ if it is small in $\mcg(S_n)$ and its support is a region $R$ such that $p \notin D_R$.

\subsection{Results and applications.}\label{ResultsApps}
We now state the main theorem of this paper concerning graphs of regions associated to spheres.

\begin{main}\label{BigDaddy}
Let $A \subset \vR(S_n)$ and suppose its corresponding graph of regions $\vG_{A}(S_n)$ contains a small vertex. Then the natural homomorphism
\[
\eta_A : \Mod(S_n) \rightarrow \Aut \vG_{A}(S_n)
\]
is an isomorphism if and only if $\vG_{A}(S_n)$ has no holes and no corks.
\end{main}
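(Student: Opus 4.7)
The plan is to split the argument along the two implications of the ``if and only if'' and to follow, \emph{mutatis mutandis}, the strategy Brendle--Margalit developed for graphs of regions on closed surfaces, making the adjustments forced by the sphere with marked points.

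For the forward implication I would argue contrapositively, by producing an exchange automorphism in $\Aut \vG_A(S_n)$ that is not in the image of $\eta_A$. If $v_1, v_2$ are distinct holes with the same filling $R'$, then every vertex $w \neq v_1, v_2$ corresponds to a region disjoint from $v_i$ iff it is disjoint from $R'$, hence iff it is disjoint from the other $v_j$; the involution swapping $v_1$ and $v_2$ and fixing all other vertices is therefore an automorphism. Similarly, if $(u,v)$ is a cork pair with $v$ the annular vertex and $u$ the disc vertex, then the definition of a cork forces every other vertex to be disjoint from $u$ exactly when it is disjoint from $v$, yielding an exchange automorphism swapping $u$ and $v$. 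Neither exchange can come from a mapping class, since a homeomorphism of $S_n$ fixing the isotopy class of every essential region other than two is readily shown (using e.g.\ the Alexander method) to be the identity.

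For the backward implication the bulk of the work is to show surjectivity of $\eta_A$; injectivity follows from the standard fact that a mapping class acting trivially on all essential curves is trivial, combined with the presence of a small vertex, which guarantees that $\vG_A(S_n)$ carries enough curve information to detect non-triviality. For surjectivity I would take $\phi \in \Aut \vG_A(S_n)$ and build a companion automorphism of the curve graph $\mathcal C(S_n)$, then invoke Korkmaz's theorem to realise it by an element $f \in \Mod(S_n)$, and finally verify that $\eta_A(f) = \phi$. The assignment sending a small vertex corresponding to a region $R$ to the boundary $\partial D_R$ of its enveloping disc should provide the bridge: smallness (i.e.\ $3 n(D_R) - 1 \le n$) ensures that there is enough ``room'' outside $D_R$ to detect $\partial D_R$ graph-theoretically as the unique curve whose complement houses the small vertex in a controlled way, and the absence of holes and corks ensures that this detection is unambiguous and $\phi$-equivariant.

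The main obstacle, as in the Brendle--Margalit treatment, will be extending $\phi$ from $\vG_A(S_n)$ to an automorphism of $\mathcal C(S_n)$. Concretely I would first show that $\phi$ permutes small vertices (since smallness can be characterised combinatorially via the joins/stars it spans in $\vG_A(S_n)$), then show that $\phi$ respects the relation ``$R$ and $R'$ share the same enveloping disc'', so that one obtains a well-defined action on isotopy classes of curves that arise as $\partial D_R$ for small $R$; the no-holes hypothesis rules out pairs of small regions being merged, and the no-corks hypothesis prevents small annuli from being swapped with their complementary discs, so this action is injective. Extending to all of $\mathcal C(S_n)$ then uses connectivity and the local-to-global principle of Ivanov/Korkmaz type, after which Korkmaz's theorem yields the desired $f$. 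The last step, checking that $\eta_A(f)$ and $\phi$ agree on every vertex of $\vG_A(S_n)$ (not just the small ones), will be handled by decomposing an arbitrary region into pieces visible to the small vertices, again exploiting the absence of holes and corks to rule out the only possible discrepancies.
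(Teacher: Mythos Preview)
Your forward implication is exactly what the paper does: holes with equal fillings and cork pairs yield exchange automorphisms outside the image of $\eta_A$. For the backward implication your broad strategy (reduce to a curve-graph automorphism, invoke Korkmaz, then check agreement) is also the paper's, and injectivity is handled the same way.

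The substantive difference is in the bridge from $\vG_A(S_n)$ to curves. You propose sending a \emph{small} vertex $R$ to the boundary $\partial D_R$ of its enveloping disc, after first arguing that smallness is a combinatorial invariant of $\phi$. The paper does not attempt to characterise smallness combinatorially; instead it identifies the \emph{maximal perfect joins} of $\vG_A(S_n)$ and shows these are in bijection with the vertices of $\vC_k(S_n)$, where $k=\min\{n(D_R):R\text{ represented in }A\}$. Compatibility of joins corresponds to adjacency of curves, so every $\phi\in\Aut\vG_A(S_n)$ induces an automorphism of $\vC_k(S_n)$, and one checks this assignment is injective. The upshot is that the target is not $\vC(S_n)$ but the subgraph $\vC_k(S_n)$ spanned by $l$-curves with $l\ge k$.

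This matters because your ``extend to all of $\mathcal C(S_n)$ using connectivity and the local-to-global principle'' is precisely the hard step, and it is not a routine application of Korkmaz. The curves you can directly see from $\vG_A(S_n)$ are $l$-curves with $l\ge k$; getting down to $2$-curves requires a genuine induction. The paper proves separately (Theorem~\ref{etas}) that $\Mod(S_n)\to\Aut\vC_k(S_n)$ is an isomorphism when $n\ge 3k-1$, by introducing \emph{marked point sharing pairs}: a combinatorial configuration of two $k$-vertices that encodes a specific $(k-1)$-curve, together with a connectivity result for the graph of such pairs. This allows one to extend any $\phi\in\Aut\vC_k(S_n)$ uniquely to $\Aut\vC_{k-1}(S_n)$, and hence inductively down to Korkmaz's base case $\vC_2(S_n)=\vC(S_n)$. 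Your outline does not contain any analogue of this mechanism, and without it the extension step is a genuine gap. The ``small vertex $\mapsto \partial D_R$'' idea also only produces $l$-curves with $l\le(n+1)/3$, which is a different and less convenient slice of the curve graph than $\vC_k(S_n)$; the join formulation gives you all of $\vC_k(S_n)$ at once.
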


In their paper, Brendle-Margalit also proved that if $\vG_{A}(\Sigma_{g,0})$ contains a small vertex then the automorphism group of the graph is isomorphic to the semidirect product of the $\Mod (\Sigma_{g,0})$ and the group generated by exchange automorphisms. This is also true in graphs $\vG_{A}(S_n)$ containing a small vertex. The techniques used in the proof of this fact are the same as \cite[Theorem 2.2]{TD} and so it is omitted from this paper.

Using the work of Brendle-Margalit \cite[Section 6]{TD} and Ivanov \cite{TeichModGroups} we then prove the following proposition.

\begin{main}\label{Proppy}
Let $N$ be a normal subgroup of either $\mcg(S_n)$ or $\mcg(S_n,p)$ such that $N$ contains a small element. Then $\Aut N$ is isomorphic to the normaliser of $N$ in $\Mod(S_n)$. Furthermore, if $N$ is normal in $\Mod(S_n)$ then the group of abstract commensurators $\Comm N$ is $\Mod(S_n)$.
\end{main}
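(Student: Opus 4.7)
The plan is to follow the strategy of Brendle--Margalit \cite[Section~6]{TD}, using Theorem \ref{BigDaddy} as the main input together with Ivanov's group-theoretic machinery from \cite{TeichModGroups}. Associate to $N$ the set $A \subset \vR(S_n)$ consisting of the $\Mod(S_n)$-orbits of supports of small elements of $N$; by hypothesis $A$ is non-empty and $\vG_{A}(S_n)$ contains a small vertex. The first step is to verify that $\vG_{A}(S_n)$ has no holes and no corks. Normality of $N$ in $\mcg(S_n)$ (or $\mcg(S_n,p)$) provides conjugates of the original small element inside $N$, and together with products and commutators of these conjugates one obtains small elements of $N$ whose supports lie in any prescribed complementary disc, using the topological room afforded by the inequality $3n(D_R) - 1 \le n$. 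Theorem \ref{BigDaddy} then yields an isomorphism $\eta_A : \Mod(S_n) \to \Aut \vG_{A}(S_n)$.

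The second step is to construct a homomorphism $\Phi : \Aut N \to \Mod(S_n)$. Small elements of $N$ admit a purely group-theoretic characterisation via the isomorphism type of their centralisers in $N$ and the ranks of maximal abelian subgroups containing them (compare \cite{TeichModGroups}), and two small elements have disjoint supports precisely when appropriate powers commute. Consequently any $\phi \in \Aut N$ induces an adjacency-preserving permutation of the vertices of $\vG_{A}(S_n)$, which by the previous step corresponds under $\eta_A^{-1}$ to a unique element $\Phi(\phi) \in \Mod(S_n)$. To identify the image of $\Phi$ with the normaliser of $N$ in $\Mod(S_n)$, observe that $\phi$ and conjugation by $\Phi(\phi)$ agree on every small element of $N$; since $N$ is generated by its small elements (as it is normally generated by the given small element and all conjugates of a small element are again small), these two automorphisms agree on all of $N$, and in particular $\Phi(\phi)$ normalises $N$. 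Conversely every element of the normaliser induces an automorphism of $N$ by conjugation, and this assignment is injective because the centraliser of $N$ in $\Mod(S_n)$ is trivial.

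For the commensurator statement, suppose $N$ is normal in $\Mod(S_n)$, so the normaliser is all of $\Mod(S_n)$. Given an isomorphism $\phi : H_1 \to H_2$ between finite-index subgroups of $N$, a sufficiently high power of the given small element lies in $H_1$, so $H_1$ itself contains a small element; rerunning the above machinery produces a unique $f \in \Mod(S_n)$ such that $\phi$ is conjugation by $f$ restricted to $H_1$. This descends to a well-defined injection $\Comm N \to \Mod(S_n)$, which is surjective because $\Mod(S_n)$ acts on $N$ (and hence on each finite-index subgroup) by conjugation, yielding $\Comm N \cong \Mod(S_n)$. The main obstacle I anticipate is verifying the absence of holes and corks in $\vG_{A}(S_n)$ directly from normality and the presence of a single small element, as this requires extracting enough small elements of $N$ with prescribed supports from products and commutators of the conjugates of the given one.
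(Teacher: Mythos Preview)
Your proposal contains a genuine gap. The assertion that ``$N$ is generated by its small elements (as it is normally generated by the given small element and all conjugates of a small element are again small)'' is unjustified: the hypothesis is only that $N$ \emph{contains} a small element, not that $N$ is the normal closure of that element. For a general normal subgroup there is no reason to expect small elements to generate. This breaks the step where you conclude that $\phi$ and conjugation by $\Phi(\phi)$ agree on all of $N$, and hence the identification of the image of $\Phi$ with the normaliser. The paper avoids this issue entirely: rather than comparing $\phi$ and $\alpha_{\Phi(\phi)}$ on generators, it shows (via the injectivity of the natural map $\Mod(S_n)\to\Aut\vG_{N\cap P}(S_n)$) that for every $h\in N$ the induced graph automorphisms $h_\star$ and $(\Phi(\phi)\,h\,\Phi(\phi)^{-1})_\star$ coincide, and then invokes a foliation argument using an arbitrary pseudo-Anosov element of $N$ to upgrade agreement on a finite-index subgroup to agreement on all of $N$.

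A second omission is the passage to a \emph{pure} finite-index subgroup. Your claim that ``two small elements have disjoint supports precisely when appropriate powers commute'' and your proposed group-theoretic characterisation of small elements via centralisers are only valid for pure mapping classes; the paper first replaces $N$ by $N\cap P$ for a pure normal finite-index subgroup $P$ (Ivanov) before running any centraliser arguments. Relatedly, the paper does not take vertices to be supports of small elements, but rather supports of \emph{basic subgroups} (minimal non-abelian subgroups for the centraliser partial order), a notion that is manifestly preserved by abstract isomorphisms between finite-index subgroups and is stable under passing to finite index (Lemma~\ref{Action}(2)); this is what makes the commensurator map well defined. Finally, the obstacle you anticipate about holes and corks is not resolved in the paper by proving their absence: instead one \emph{fills} the holes of $\vG^\sharp_N(S_n)$ and then passes to the connected component containing the small vertices, which automatically has no holes or corks.
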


Before we apply Theorem \ref{Proppy} to normal subgroups of braid groups and the hyperelliptic Johnson filtration it is useful to consider the following subgroup of $\mcg(S_n)$.  Recall that there is an exact sequence
\[
1 \rightarrow P \rightarrow \mcg(S_n,p) \xrightarrow[]{F} \mcg(S_{n-1}) \rightarrow 1,
\]
given by Birman, where $F$ is the surjective homomorphism that forgets the marked point $p$.  We note that $P$ is not normal in $\mcg(S_n)$ yet contains an element that is a small element of $\mcg(S_n)$. Conversely, $P$ is normal in $\mcg(S_n,p)$ but every element has support containing $p$. Recall from the definition of small that no such element is small in $P$. We conclude therefore that Theorem \ref{Proppy} does not apply in this case. Indeed, the subgroup $P$ is isomorphic to a free group, implying that $\Aut P$ is considerably larger than its normaliser in $\Mod(S_n)$.

\subsection*{Application to braid groups}
To prove Theorem \ref{BraidTHM} we note that a small element of $\mcg(S_{n+1},p)$ has support contained in a disc containing $(n+2)/3$ marked points. We can therefore express this element as a product of at most $(n-1)/3$ half twists. If $N$ is a normal subgroup of $\mcg(S_{n+1},p)$ this implies the corresponding subgroup of $B_n / Z$ contains an element represented by a product of at most $(n-1)/3$ standard generators. Applying Theorem \ref{Proppy} completes the proof of Theorem \ref{BraidTHM}.

\subsection*{Application to the hyperelliptic Johnson filtration}
We have from Birman-Hilden \cite{BH} that
\[
\mcg(S_{2g+2}) \cong \Smcg(\Sigma_{g,0}) / \langle \iota \rangle.
\]
Since the hyperelliptic involution $\iota$ does not act trivially on homology we have that $\mathcal{SI}(\Sigma_{g,0})$ is isomorphic to a normal subgroup of $\mcg(S_{2g+2})$. We note that $\mathcal{SI}(\Sigma_{g,0})$ is generated by Dehn twists about symmetric separating curves of $\Sigma_g$ (separating curves that are fixed by $\iota$) as shown by Brendle-Margalit-Putman \cite{BMP}. Under the isomorphism these generators map to squares of Dehn twists about curves separating an odd number of marked points in $S_{2g+2}$. If $g\ge3$ then it follows that the image of $\mathcal{SI}(\Sigma_{g,0})$ in $\mcg(S_{2g+2})$ contains a small element. The normaliser of this subgroup is $\Mod(S_{2g+2})$ and so from Theorem \ref{Proppy} we recover the result of Leah Childers \cite{LRC} that
\[
\Comm \mathcal{SI}(\Sigma_{g,0}) \cong \Aut \mathcal{SI}(\Sigma_{g,0}) \cong \Mod(S_{2g+2}) \cong \SMod(\Sigma_{g,0}) / \langle \iota \rangle.
\]
Similarly, any term $\vSN_k(\Sigma_{g,0})$ is isomorphic to a normal subgroup of $\Mod(S_{2g+2})$. Using a construction similar to \cite[Proof of Theorem 5.10]{Meta} we can find an element of $\vSN_k(\Sigma_{g,0})$ whose support is contained in a genus two subsurface with one boundary component. This then corresponds to an element of $\Mod(S_{2g+2})$ whose support is contained in a disc with five marked points. From Theorem \ref{Proppy} and the definition of small we have that if $2g+2 \ge 3 (5) - 1$ then
\[
\Comm \vSN_k(\Sigma_{g,0}) \cong \Aut \vSN_k(\Sigma_{g,0}) \cong \Mod(S_{2g+2}) \cong \SMod(\Sigma_{g,0}) / \langle \iota \rangle.
\]
This is precisely the statement of Theorem \ref{HJF}.

\subsection*{Outline of the paper.}
In Section \ref{EA} we visit some results of Brendle-Margalit regarding exchange automorphisms and prove the injectivity part of Theorem \ref{BigDaddy}. The structure of the remainder of the paper centres around a study of the automorphisms of certain subgraphs of the curve graph. First, note that each curve $\bc$ in $S_n$ separates the surface into two discs, say $D_l$ and $D_m$ where $l \le m$. We call $\bc$ an $l$-curve and we call the discs $D_l$ and $D_m$ the small and large \emph{associated discs} of the curve $\bc$ respectively.
If a vertex $v$ of the curve graph $\mathcal C (S_n)$ corresponds to an $l$-curve then we call it an $l$-vertex.

We define $\vC_{k}(S_n)$ to be the subgraph of $\mathcal C (S_n)$ spanned by all $l$-vertices for which $l \ge k$. The proof of Theorem \ref{BigDaddy} relies on the following result which we will prove in Section \ref{Subgraphs}.

\begin{thm}\label{etas}
Let $S_n$ be a surface such that $n \ge 3 k - 1$. The natural homomorphism
\begin{center}$\eta_k\ :\ \Mod(S_n) \rightarrow \Aut \vC_{k} (S_n)$\end{center}
is an isomorphism.
\end{thm}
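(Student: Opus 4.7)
The plan is to reduce to Korkmaz's theorem, which asserts that $\eta : \Mod(S_n) \to \Aut \mathcal C(S_n)$ is an isomorphism whenever $n \ge 5$. Under the hypothesis $n \ge 3k-1$ this applies whenever $k \ge 2$; the case $k=1$ is Korkmaz itself, so we may assume $k \ge 2$. I will establish injectivity and surjectivity of $\eta_k$ separately.

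\emph{Injectivity.} Suppose $f \in \ker \eta_k$, so that $f$ fixes every $l$-vertex with $l \ge k$; equivalently, $f$ preserves the partition of marked points induced by every $k$-curve. For any marked point $p$, the bound $n \ge 3k-1 \ge 2k$ lets us choose two $k$-curves $d_1, d_2$ whose small discs each contain $p$ but whose remaining $k-1$ marked points are disjoint. Then $f(p)$ must lie in the intersection of these two $k$-point sets, which is $\{p\}$. Hence $f$ fixes every marked point; since a simple closed curve on a sphere is determined by its induced partition of marked points, $f$ fixes every vertex of $\mathcal C(S_n)$ and Korkmaz then forces $f = 1$.

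\emph{Surjectivity.} Given $\phi \in \Aut \vC_k(S_n)$, I would construct an extension $\tilde\phi \in \Aut \mathcal C(S_n)$; Korkmaz then provides $f \in \Mod(S_n)$ with $\eta(f) = \tilde\phi$, and $\eta_k(f) = \phi$ by construction. For any $v \in V(\mathcal C(S_n))$, let $L(v)$ be the set of $\vC_k$-vertices adjacent to $v$ in $\mathcal C(S_n)$. If $v$ is an $l$-vertex with $l < k$, then $L(v)$ consists precisely of the $\vC_k$-curves contained in the large disc $D_{n-l}$ of $v$. The hypothesis $n \ge 3k-1$ ensures that the small discs of such curves cover every marked point of $D_{n-l}$, so $L(v)$ determines the partition of marked points and hence the vertex $v$. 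I would then define $\tilde\phi(v)$ to be the unique vertex $v'$ satisfying $L(v') = \phi(L(v))$.

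The main obstacle, and the technical heart of the argument, is to show that $\phi(L(v))$ really is of the form $L(v')$ for some $v' \in V(\mathcal C(S_n))$; equivalently, that $\phi$ preserves the family of ``link-of-a-curve'' subsets of $V(\vC_k(S_n))$. My preferred approach is induction on $k$, extending $\phi$ one index at a time from $\vC_j(S_n)$ to $\vC_{j-1}(S_n)$ down to $\vC_1(S_n) = \mathcal C(S_n)$. At each stage, a $(j-1)$-vertex $w$ with small disc $D_{j-1}$ is characterised by the ``star'' of $j$-curves obtained by adjoining each marked point of the complement to $D_{j-1}$; such stars should be detectable within $\vC_j(S_n)$ via their intersection and link patterns among $j$-vertices, and the bound $n \ge 3k-1 \ge 3j-1$ at every intermediate stage guarantees enough room to distinguish them. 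Once $\tilde\phi$ is defined on all of $V(\mathcal C(S_n))$, bijectivity follows by applying the same construction to $\phi^{-1}$, and adjacency preservation reduces to the observation that disjointness of an arbitrary vertex $v$ from a $\vC_k$-vertex $u$ is encoded precisely by the relation $u \in L(v)$, which $\phi$ respects by hypothesis.
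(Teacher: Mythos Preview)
Your overall architecture matches the paper's: prove injectivity directly, then for surjectivity descend by induction from $\vC_k$ to $\vC_{k-1}$ down to $\vC_2=\vC$, invoking Korkmaz at the base. Your injectivity argument is different from (and in this spherical setting more direct than) the paper's Lemma~\ref{Injectivity}, which uses a filling argument; yours is fine.

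The gap is in the inductive step for surjectivity. You correctly identify the obstacle---that one must show $\phi$ carries the configuration of $j$-curves encoding a $(j-1)$-curve to another such configuration---but the sentence ``such stars should be detectable within $\vC_j(S_n)$ via their intersection and link patterns'' is exactly the content that needs proof, and you have not supplied it. It is not obvious: the $j$-curves in your ``star'' pairwise intersect, so the star is an anticlique in $\vC_j$, and you need a purely combinatorial characterisation in $\vC_j$ of \emph{which} anticliques arise this way, together with an argument that two such configurations determining the same $(j-1)$-curve can be recognised as doing so.

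The paper handles this with two lemmas. First (Lemma~\ref{mp}), it gives a graph-theoretic characterisation of a \emph{marked point sharing pair}: two $k$-vertices $u,v$ whose curves intersect twice and cut off a common $D_{k-1}$. The characterisation is in terms of auxiliary vertices $x,y,z$ with specified adjacency and sidedness properties, and the hypothesis $n\ge 3k-1$ is used to guarantee that the witnesses $x,y$ exist. Second (Lemma~\ref{ShTr}), it shows that the graph $\mathcal{SP}$ whose vertices are sharing pairs and whose edges are \emph{sharing triples} has one connected component $\mathcal{SP}(\bc)$ for each $(k-1)$-curve $\bc$; this is proved via Putman's connectivity criterion for a group action. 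These two lemmas together give a well-defined map from $(k-1)$-vertices to $\phi$-images, which is what your ``star'' idea is reaching for but does not establish. Without an analogue of these lemmas, your extension $\tilde\phi$ is not known to be well defined.
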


The proof follows an inductive argument where the base case is a theorem of Korkmaz discussed above, that is, we use the fact that $\vC_2(S_n) = \vC (S_n)$ and that $\Aut \vC(S_n) \cong \Mod(S_n)$ \cite{MK} . In Section \ref{GOR} we complete the proof of Theorem \ref{BigDaddy} by relating $\Aut \vG_{A}(S_n)$ with $\Aut \vC_{k}(S_n)$ for some $k$. Finally, we dedicate Section \ref{Subbies} to the proof of Theorem \ref{Proppy}.  As mentioned above, this result implies both Theorem \ref{BraidTHM} and Theorem \ref{HJF}.

\subsection*{Acknowledgments.} The author would like to thank his supervisor, Tara Brendle, for her helpful guidance and support. He is grateful to Dan Margalit for several helpful discussions and suggestions that greatly improved the paper. He is also grateful to Tyrone Ghaswala and Luis Paris for detailed comments on an earlier draft. Finally, he would like to thank Javier Aramayona, Vaibhav Gadre, and Shane Scott for their support and helpful discussions about the paper.

\section{Exchange Automorphisms and Injectivity}\label{EA}
In this section we give sufficient conditions for a graph of regions $\vG_{A}(S_n)$ to admit exchange automorphisms. We then prove that the natural homomorphism defined in Section \ref{INTRO} is injective.

Recall the example of holes given in Section \ref{INTRO} whose fillings are homotopic discs containing three marked points, see Figure \ref{CorkHole}(i). Label the two holes $u,v \in \vG_{A}(S_n)$ and write $D$ for one of these discs. A vertex is adjacent to $u$ or $v$ precisely when it corresponds to a region that is disjoint from $D$. It is clear, therefore, that $u$ and $v$ have equal links. We can then define an automorphism of the graph $\vG_{A}(S_n)$ that exchanges the two vertices $u$ and $v$. No such automorphism can belong to the image of the natural homomorphism $\eta_A$. Indeed, there is no element of $\Mod(S_n)$ that swaps these two regions.

Similarly, recall the example of the cork pair from Figure \ref{CorkHole}(ii). Here, any region that is disjoint from the disc $R_2$ is homotopic to a region that is disjoint from the annulus $R_1$, including the annulus itself. Furthermore, any region that is disjoint from $R_1$ is either disjoint from $R_2$, homotopic to $R_2$, or is a proper non-peripheral subsurface of $R_2$. Since there are no such subsurfaces represented in the subset $A \subset \vR (S_n)$ we conclude that the vertices $u,v \in \vG_{A}(S_n)$ corresponding to $R_1,R_2$ have equal stars. We can once again define an automorphsim exchanging $u$ and $v$. Clearly there is no element of $\Mod(S_n)$ that maps a disc to an annulus. We again conclude that the natural homomorphism $\eta_A$ is not surjective.

We give a version of \cite[Theorem 2.1]{TD} for surfaces $S_n$ with $n > 0$. The proof for spheres follows Brendle-Margalit's proof for surfaces of the form $\Sigma_{g,0}$. Indeed, the result follows from the definition of a graph of regions $\vG_A(S_n)$, complementary discs, and the proof of \cite[Theorem 2.1]{TD}.

\begin{lem}[Brendle-Margalit]\label{changies}
Let $S_n$ be a sphere with $n>0$. If $\vG_{A}(S_n)$ is a connected complex of regions then $\vG_{A}(S_n)$ admits exchange automorphisms if and only if it has hole or a cork. Moreover, two vertices admit an exchange automorphism if and only if they have holes with equal fillings or they form a cork pair.
\end{lem}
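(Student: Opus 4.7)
The plan is to prove the two directions separately, noting that the harder content is the implication from an exchange automorphism to the existence of a hole or cork.

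For the \emph{if} direction, I verify directly that the proposed exchanges are automorphisms. Suppose $v_1, v_2$ correspond to holes $R_1, R_2$ with a common filling $R'$. Any vertex $w$ of $\vG_A(S_n)$ with associated region $R_w$ is adjacent to $v_i$ if and only if $R_w$ is isotopic to a subsurface of some complementary disc of $R_i$. The hole hypothesis forces the complementary disc $D_i$ of $R_i$ to contain no region represented in $A$; hence $R_w$ must lie (up to isotopy) in $S_n \setminus R'$, and this condition is symmetric in $R_1, R_2$. Therefore $\Lk(v_1) = \Lk(v_2)$, and the transposition $(v_1\, v_2)$ is an automorphism. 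The cork-pair case is analogous: if $u, v$ correspond to a disc $D$ and its annular boundary $R$, then a region disjoint from $D$ is disjoint from $R$, while a region disjoint from $R$ either lies in $S_n \setminus D$ or is a subsurface of $D$, and by the cork condition the latter is homotopic to $R$ itself. One checks that $\mathrm{star}(u) = \mathrm{star}(v)$, so the swap is an automorphism.

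For the \emph{only if} direction, let $\phi$ swap $v_1, v_2$ (corresponding to regions $R_1, R_2$) and fix everything else. I split into two cases based on whether $v_1$ and $v_2$ are adjacent. If they are not adjacent, then $\Lk(v_1) = \Lk(v_2)$. Any region $Q$ represented in $A$ disjoint from $R_1$ must be disjoint from $R_2$ and vice versa; by analyzing the complementary discs of $R_1$ and $R_2$ and using connectedness of $\vG_A(S_n)$ to exhaust $A$, I conclude that both $R_1, R_2$ have a complementary disc containing no element of $A$, and that filling these discs yields the same region $R'$. If $v_1$ and $v_2$ are adjacent, then $\mathrm{star}(v_1) = \mathrm{star}(v_2)$ and in particular $v_1, v_2$ are mutual neighbours, so $R_1, R_2$ admit disjoint representatives. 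Using that every subsurface of $R_i$ represented in $A$ must appear in $\mathrm{star}(v_{3-i})$, one deduces that one of the two regions is an annulus parallel to the boundary of the other, which is a disc, and that no proper, non-peripheral subsurface of the disc besides the annular boundary is represented in $A$; this is exactly the cork-pair condition.

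The main obstacle is the \emph{only if} direction, and specifically ruling out the exotic configurations that might a priori give two vertices with equal link or star but which are neither a hole-pair nor a cork-pair. The key technical points are: (a) on the sphere $S_n$, the complement of any region is a disjoint union of discs, which constrains how two regions can have identical neighbourhoods; and (b) connectedness of $\vG_A(S_n)$ is essential to propagate local information into the claim that the fillings of the two holes are actually equal. Since these are precisely the ingredients used by Brendle--Margalit in their Theorem 2.1, I expect the argument to reduce, after replacing ``non-separating curve'' input with ``complementary disc'' input, to a direct adaptation of their case analysis with no genuinely new idea required.
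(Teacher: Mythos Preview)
Your proposal is correct and matches the paper's approach: the paper does not give an independent proof but simply states that the result follows from the definitions of graphs of regions and complementary discs together with Brendle--Margalit's proof of \cite[Theorem 2.1]{TD}, and your sketch is precisely an outline of that adaptation. Your final paragraph explicitly identifies the argument as a direct transplant of Brendle--Margalit's case analysis with ``complementary disc'' replacing the genus-dependent input, which is exactly what the paper asserts.
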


We now describe the automorphism group of a complex containing holes or corks, that is, a complex admitting exchange automorphisms. First, we define $\Ex \vG_{A}(S_n)$ to be the subgroup generated by the exchange automorphisms of $\vG_A(S_n)$.

\begin{lem}[Brendle-Margalit]
Let $A \subset \vR(S_n)$ and let $\vG_{A}(S_n)$ contain a small vertex. Then
\[
\Aut \vG_{A}(S_n) \cong \Ex \vG_{A}(S_n) \rtimes \Mod(S_n).
\]
\end{lem}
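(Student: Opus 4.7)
The plan is to exhibit the semidirect product structure as a split short exact sequence
\[
1 \longrightarrow \Ex \vG_A(S_n) \longrightarrow \Aut \vG_A(S_n) \longrightarrow \Mod(S_n) \longrightarrow 1,
\]
split by the natural homomorphism $\eta_A$. Four things must be established: normality of $\Ex \vG_A(S_n)$ inside $\Aut \vG_A(S_n)$; injectivity of $\eta_A$; triviality of the intersection $\Ex \vG_A(S_n) \cap \eta_A(\Mod(S_n))$; and surjectivity of $\Aut \vG_A(S_n)$ onto $\Mod(S_n)$ modulo $\Ex \vG_A(S_n)$.

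Normality is immediate from Lemma \ref{changies}: for any $\phi \in \Aut \vG_A(S_n)$, conjugation carries the exchange swapping $v_1, v_2$ to the transposition of $\phi(v_1)$ and $\phi(v_2)$, and the equal-links or equal-stars condition characterising exchange pairs is preserved by any graph automorphism. Injectivity of $\eta_A$ is the injectivity half of Theorem \ref{BigDaddy}, proved in Section \ref{EA} using the small vertex hypothesis. For the trivial intersection, an element of $\Ex \vG_A(S_n)$ fixes every vertex outside a finite union of exchangeable equivalence classes; if such an element also equals $\eta_A(f)$, then the mapping class $f$ fixes all but finitely many vertices of $\vG_A(S_n)$, and the injectivity argument of Section \ref{EA} together with the abundance of small regions then forces $f$ to be trivial, contradicting the assumed non-triviality of the exchange.

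The main work is surjectivity modulo $\Ex \vG_A(S_n)$. I would pass to a reduced subset $A' \subset A$ obtained by retaining, from each equivalence class of holes with equal filling, exactly one representative, and by discarding the annular cork from each cork pair. Lemma \ref{changies} then ensures $\vG_{A'}(S_n)$ has neither holes nor corks, and the small vertex hypothesis transfers to $\vG_{A'}(S_n)$ upon choosing the retained representatives appropriately. By Theorem \ref{BigDaddy}, $\Aut \vG_{A'}(S_n) \cong \Mod(S_n)$ via $\eta_{A'}$. Any $\phi \in \Aut \vG_A(S_n)$ permutes the exchangeable equivalence classes, so after post-composing with a suitable element of $\Ex \vG_A(S_n)$ we obtain $\phi'$ fixing each chosen representative; this $\phi'$ restricts to an automorphism of $\vG_{A'}(S_n)$ and must therefore equal $\eta_{A'}(f)$ for a unique $f \in \Mod(S_n)$, whereupon a final verification shows $\eta_A(f) \equiv \phi \pmod{\Ex \vG_A(S_n)}$.

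The main obstacle I anticipate is the last step: showing that the restriction map $\phi \mapsto \phi|_{\vG_{A'}(S_n)}$ descends to a well-defined isomorphism from $\Aut \vG_A(S_n) / \Ex \vG_A(S_n)$ onto $\Aut \vG_{A'}(S_n)$, and that the mapping class $f$ extracted from $\phi$ acts correctly on the discarded equivalence classes up to exchanges. This amounts to a bookkeeping argument paralleling the analogous construction in Brendle-Margalit's treatment of surfaces $\Sigma_{g,0}$, with the key combinatorial input being that each exchangeable equivalence class is determined by its link or star inside the reduced graph $\vG_{A'}(S_n)$.
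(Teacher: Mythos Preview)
The paper does not supply its own argument here; it simply records that the proof of \cite[Theorem~2.2]{TD} for closed surfaces uses only the definitions of holes, corks, and graphs of regions, and so carries over to $S_n$ unchanged. Your overall architecture---exhibit a split exact sequence, with $\Ex \vG_A(S_n)$ normal, $\eta_A$ providing the splitting, and surjectivity established by reducing to a hole-free, cork-free graph where Theorem~\ref{BigDaddy} applies---is indeed the shape of the Brendle--Margalit argument.

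The gap is in your construction of $A'$. You propose to obtain $A' \subset A$ by ``retaining, from each equivalence class of holes with equal filling, exactly one representative''. But $A$ is a set of $\Mod(S_n)$-orbits of regions, while holes with equal filling are individual vertices (homotopy classes), so this operation is not well-defined at the level of $A$. More concretely, if $R$ is a hole with filling $R'$, then any mapping class supported in $R'$ that displaces $R$ produces another hole in the \emph{same} $\Mod(S_n)$-orbit with the \emph{same} filling $R'$; thus every orbit that contributes a hole contributes infinitely many holes with that filling, and no deletion of orbits from $A$ can leave exactly one. Moreover, any retained hole remains a hole in $\vG_{A'}(S_n)$, since its offending complementary disc still contains nothing represented in $A' \subset A$---so Lemma~\ref{changies} does not yield the conclusion you want. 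The correct reduction (this is \cite[Lemma~2.4]{TD}, and is precisely what the present paper invokes later in Section~\ref{Subbies} when forming $\vG_N^\flat$ from $\vG_N^\sharp$) is not to shrink $A$ but to pass to the \emph{filling}: replace each region by its filled region. Holes with equal filling then collapse to a single vertex, corks are absorbed, and the filled graph genuinely has no holes or corks, so Theorem~\ref{BigDaddy} applies to it. With that correction, the rest of your outline goes through.
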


The proof for the analogous result for surfaces of the form $\Sigma_{g,0}$ can be found in \cite[Theorem 2.2]{TD}, and as with Lemma \ref{changies} relies solely on the definitions of corks, holes and graphs of regions and carries over to spheres with marked points.

We now prove that the natural homomorphism is injective. Note that we can apply this result to certain subgraphs of the curve graph by invoking the bijective relationship between the set of isotopy classes of curves and the set of homotopy classes of annuli.

\begin{lem}\label{Injectivity}
Let $A \subset \vR(S_n)$ and let $\vG_{A}(S_n)$ be a graph of regions. The natural homomorphism
\[
\eta_A : \Mod(S_n) \rightarrow \Aut \vG_{A}(S_n)
\]
is injective.
\end{lem}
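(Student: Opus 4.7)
The plan is to show that any $f \in \ker \eta_A$ acts trivially on the curve graph $\mathcal{C}(S_n)$, after which we invoke Korkmaz's theorem \cite{MK} that the natural homomorphism $\Mod(S_n) \to \Aut \mathcal{C}(S_n)$ is injective for the relevant range of $n$ (small cases being handled by direct inspection, or else vacuous since we assume $\vG_{A}(S_n)$ is nonempty).

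Let $f \in \ker \eta_A$ and pick any vertex $v$ of $\vG_{A}(S_n)$ corresponding to a region $R$. Because $A$ is a union of $\Mod(S_n)$-orbits, for every $h \in \Mod(S_n)$ the translate $h(R)$ also has a vertex in $\vG_{A}(S_n)$, and this vertex is fixed by $\eta_A(f)$. Hence $f$ preserves the isotopy class of every translate $h(R)$, and in particular $f$ permutes the isotopy classes of the boundary components of each $h(R)$.

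The main step is to upgrade this permutation-level information to pointwise fixing of isotopy classes of curves. Given a boundary curve $c$ of $R$, I would exhibit two regions in $A$ whose collections of boundary isotopy classes meet in exactly $\{c\}$, by choosing translates $h(R)$ arranged to share only $c$ with $R$. Since $f$ preserves both regions, it must preserve the intersection of their boundary sets, forcing $f(c) = c$. Letting $c$ range over the $\Mod(S_n)$-orbit of each boundary component of $R$, we conclude that $f$ fixes every curve in this orbit.

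To finish I would apply an Alexander-type argument within the orbit: a single $\Mod(S_n)$-orbit of essential simple closed curves in $S_n$ contains a pants decomposition of $S_n$ together with curves transverse to it, and a mapping class fixing every element of this filling system is the identity. Equivalently, one can observe that $f$ acts trivially on the subgraph of $\mathcal{C}(S_n)$ spanned by this orbit and bootstrap via \cite{MK} (or via Theorem \ref{etas} once established) to triviality on all of $\mathcal{C}(S_n)$. The principal obstacle is the extraction in the third paragraph: isolating a single boundary component as the intersection of two well-chosen regions in $A$ is immediate when $R$ is a disc but requires more care for multi-boundary regions, where one must ensure enough freedom in the choice of translate $h$ to separate one boundary component from the others.
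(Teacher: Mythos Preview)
Your overall strategy is sound in spirit, but the final step has a genuine gap, and the isolation detour is unnecessary.

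The claim that a single $\Mod(S_n)$-orbit of essential simple closed curves contains a pants decomposition is false: an orbit consists of all $l$-curves for one fixed $l$, every pants decomposition of $S_n$ (for $n\ge 4$) contains a $2$-curve, and for $n\ge 7$ no pants decomposition consists entirely of $2$-curves either. So your Alexander-type conclusion does not follow from what you have shown. Your alternative, appealing to Theorem~\ref{etas}, is circular: the paper proves Theorem~\ref{etas} by first invoking precisely this injectivity lemma. Appealing to Korkmaz directly does not close the gap either, since his result concerns $\Aut\vC(S_n)$, not the automorphism group of the subgraph spanned by a single orbit.

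The paper sidesteps both the isolation step and this bootstrap problem. Rather than pinning down individual boundary components of regions in $A$, it observes that for an arbitrary $2$-curve $\bc$ with large associated disc $D$, the $\Mod(S_n)$-translates of regions represented in $A$ have boundary multicurves (each preserved setwise by $f$) that fill $D$; hence $f$ fixes $\bc$. Since $\bc$ was arbitrary, $f$ fixes every $2$-curve. In particular $f$ fixes a pants decomposition, so $f\in\mcg(S_n)$; and $f$ commutes with every half-twist, so $f$ is central in $\mcg(S_n)$ and therefore trivial. Your argument can be repaired along exactly these lines---fixing all $l$-curves forces fixing all $2$-curves by the same filling observation---but once you do that, the isolation of individual boundary components serves no purpose.
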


\begin{proof}
Let $\bc$ be a $2$-curve. The large associated disc $D$ of $\bc$ is filled by regions that are represented in $A$. Equivalently, there exist regions represented in $A$ whose boundary components are curves that fill $D$. It follows then that if $f \in \Mod(S_n)$ is in the kernel of $\eta_A$ it must also fix $\bc$. Since our choice of $\bc$ was arbitrary we can find a pants decomposition $P$ such that $f$ fixes every curve in $P$. We conclude that $f$ is a product of Dehn twists and half twists and is therefore orientation preserving, hence $f \in \mcg(S_n)$. If $H_\bc$ is the half twist defined by an arbitrary $2$-curve $\bc$ then we have that
\[
H_\bc = H_{f(\bc)} = f H_\bc f^{-1},
\]
that is, $f$ and $H_\bc$ commute. Since our choice of $\bc$ was arbitrary and $\mcg(S_n)$ is generated by half twists we have that $f$ is in the centre of $\mcg(S_n)$. The centre of $\mcg(S_n)$ is trivial and so $\eta_A$ is injective.
\end{proof}

\section{The Curve Graph and its Subgraphs}\label{Subgraphs}

Recall that a vertex $v \in \vC (S_n)$ is an $l$-vertex if it corresponds to a curve whose small associated disc contains exactly $l\ge2$ marked points. We defined $\vC_k(S_n)$ to be the full subgraph of $\vC (S_n)$ spanned by $l$-vertices, where $l \ge k$. In this section we will prove Theorem \ref{etas}, that the natural homomorphism
\[
\eta_k \ :\ \Mod(S_n) \rightarrow \Aut \vC_{k}(S_n)
\]
is an isomorphism when $n \ge 3k - 1$.

Given a vertex $v \in \vC_{k}(S_n)$ corresponding to a curve $\bv$ we say that vertices $u$ and $w$ lie on the same side of $v$ if they correspond to curves that lie on the same associated disc of $\bv$. It is easy to see that $u$ and $w$ lie on the same side of $v$ if and only if there exists a vertex spanning an edge with $v$ but with neither $u$ nor $w$.

\begin{lem}\label{vertex}
If $v$ is a $l$-vertex then $\phi(v)$ is an $l$-vertex for all $\phi \in \Aut \vC_{k}(S_n)$.
\end{lem}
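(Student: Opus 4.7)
The plan is to extract from the combinatorial structure of $\vC_k(S_n)$ an invariant of a vertex that determines its $l$-type and is manifestly preserved by any $\phi \in \Aut \vC_k(S_n)$. The main tool is the observation made just before the lemma: two neighbors $u, w$ of $v$ lie on the same side of $v$ if and only if there exists a neighbor $x$ of $v$ adjacent to neither $u$ nor $w$. Since this condition is phrased entirely in terms of the adjacency relation, $\phi$ carries the partition of $\Lk(v)$ into same-side classes bijectively to the corresponding partition of $\Lk(\phi(v))$; in particular, the multiset of class sizes is an invariant of $v$ under automorphisms.

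Next I would compute these class sizes explicitly. For an $l$-vertex $v$ with small disc containing $l$ marked points and large disc containing $n-l$ marked points, every essential simple closed curve in either disc bounds a subdisc with some $m$ marked points, and such a curve is a vertex of $\vC_k(S_n)$ precisely when $m \geq k$. Counting subsets of marked points of the appropriate sizes gives
\[
|\Lk_+(v)| = \sum_{m=k}^{l-1}\binom{l}{m}, \qquad |\Lk_-(v)| = \sum_{m=k}^{n-l-1}\binom{n-l}{m},
\]
where $\Lk_+(v)$ and $\Lk_-(v)$ denote the small-side and large-side same-side classes respectively.

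Finally, I would observe that the multiset $\{|\Lk_+(v)|, |\Lk_-(v)|\}$ uniquely determines $l$. Note that $|\Lk_+(v)| = 0$ if and only if $l = k$, so the $k$-vertices are characterized as those whose link consists of a single same-side class. For $l > k$, the small-side count $|\Lk_+(v)|$ is strictly increasing in $l$ on the range $k < l \leq \lfloor n/2 \rfloor$ while $|\Lk_-(v)|$ is strictly decreasing, and $|\Lk_+(v)| \leq |\Lk_-(v)|$. Hence the smaller entry of the multiset recovers $l$. Since $\phi$ preserves the multiset, $\phi(v)$ must be an $l$-vertex as well.

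The main obstacle is the nontrivial direction of the same-side criterion used at the outset: given two neighbors $u, w$ of $v$ on the same side, one must produce a $\geq k$-vertex on that same side whose curve intersects both $\bu$ and $\bw$. The hypothesis $n \geq 3k-1$ is precisely what guarantees enough marked points on each side for such a curve to exist; once this is in place the remainder of the argument is routine bookkeeping with the formulas above.
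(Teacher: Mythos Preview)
Your use of the same-side partition of $\Lk(v)$ is on the right track and is indeed the key combinatorial tool. The gap is in the counting step. Isotopy classes of essential simple closed curves in a disc $D_l$ with $l \ge 3$ marked points are \emph{not} parametrised by subsets of the marked points: for each admissible subset there are infinitely many non-isotopic curves enclosing exactly that subset (for example, in $D_3$ the powers of a Dehn twist about one $2$-curve applied to a second $2$-curve that meets it produce infinitely many distinct $2$-curves). Thus your binomial formulas are incorrect, and in fact for every $l$ with $k < l \le \lfloor n/2 \rfloor$ both same-side classes $\Lk_+(v)$ and $\Lk_-(v)$ are countably infinite. The multiset of cardinalities is therefore $\{0,\infty\}$ when $l = k$ and $\{\infty,\infty\}$ for every $l>k$, so it cannot distinguish a $(k+1)$-vertex from a $(k+2)$-vertex.

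Your base case does survive: the $k$-vertices are exactly those with one same-side class empty, and this is essentially how the paper begins. The paper then replaces your cardinality comparison with an inductive refinement of the same partition. Having identified the $k$-vertices, one observes that $v$ is a $(k+1)$-vertex if and only if every neighbour of $v$ on one of its two sides is a $k$-vertex; then $v$ is a $(k+2)$-vertex if and only if one side contains only $k$- and $(k+1)$-vertices; and so on exhaustively. Equivalently one can run this inside a single maximal simplex through $v$, which is finite. This inductive use of the same-side partition, rather than a raw cardinality count, is what is needed to finish the argument.
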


\begin{proof}
Let $\sigma$ be a maximal simplex of $\vC_{k}(S_n)$ containing $v$. The vertex $v$ is a $k$-vertex if and only if every vertex in $\sigma$ distinct from $v$ lies on the same side of $v$. Similarly we see that $v$ is a $(k+1)$-vertex if and only  every vertex on one side of $v$ is a $k$-vertex. We can continue the argument exhaustively, proving the lemma.
\end{proof}

We say that a vertex $u$ lies on the \emph{small side} of a vertex $v$ if it does not lie on the same side as some $\lfloor n/2 \rfloor$-vertex. That is, if $u$ corresponds to a curve that is contained in the small associated disc of a curve $\bv$, where $v$ corresponds to $\bv$.

\subsection*{Marked point sharing pairs.} Let $u$ and $v$ be two $k$-vertices of $\vC_k(S_n)$.  We say that $u$ and $v$ form a \emph{marked point sharing pair} if they correspond to curves with geometric intersection number two, one of the four subsurfaces obtained by cutting $S_n$ along the curves is a disc $D_{k-1}$ containing precisely $k-1$ marked points, and two of the subsurfaces are homeomorphic to a disc with a single marked point.

\begin{figure}[t]
\centering
\labellist \hair 1pt
	\pinlabel {$\bu$} at 90 180
	\pinlabel {$\bv$} at 200 180
	\pinlabel {$\bz$} at 145 100
	\pinlabel {$\by$} at 125 75
	\pinlabel {$\bx$} at 157 75
	\pinlabel {$u$} at 360 180
	\pinlabel {$v$} at 555 180
	\pinlabel {$z$} at 453 245
	\pinlabel {$x$} at 380 60
	\pinlabel {$y$} at 540 60
    \endlabellist
\includegraphics[scale=.65]{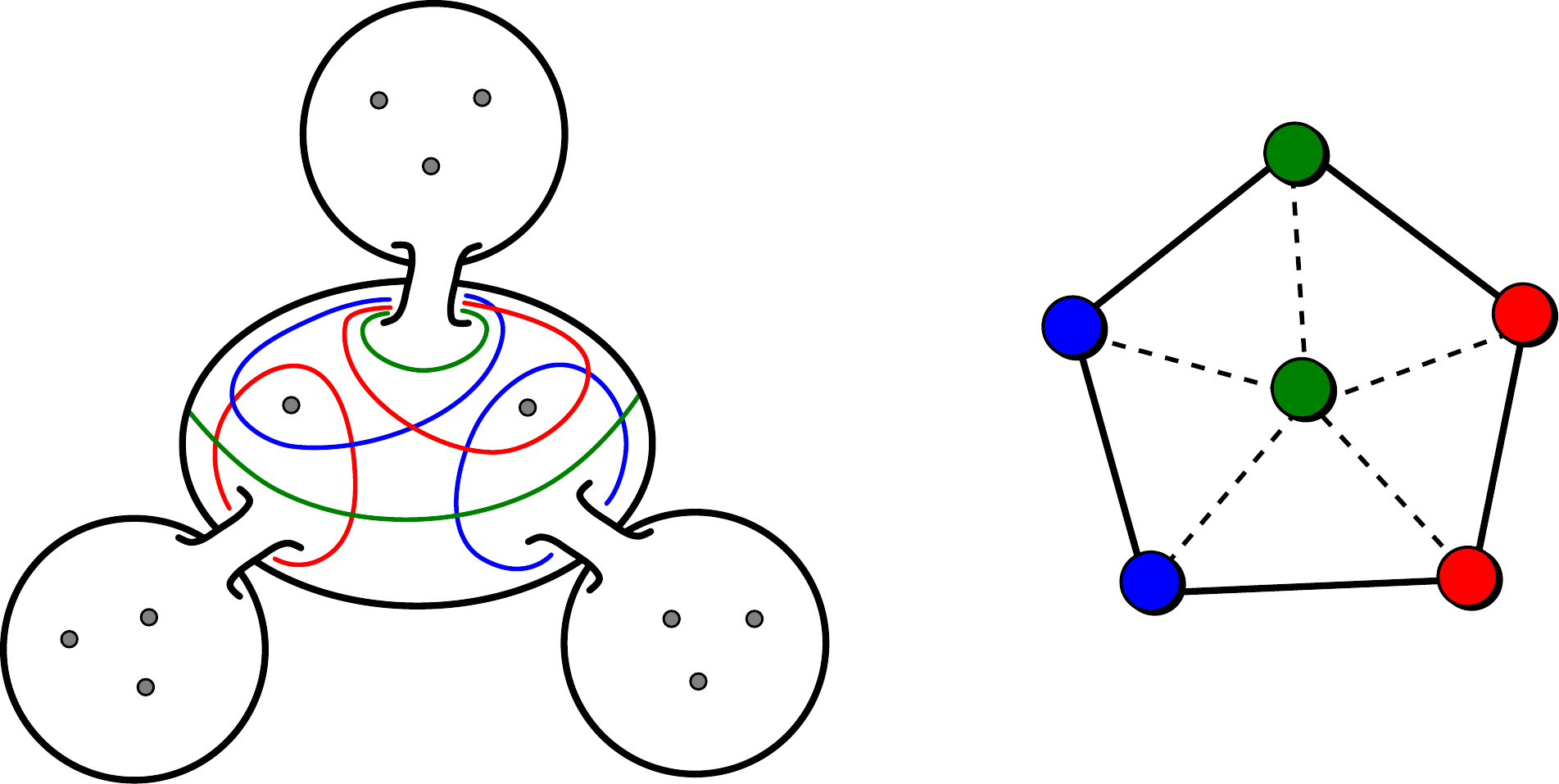}
\caption{A marked point sharing pair consisting of two $4$-vertices $u$ and $v$.}
\label{SharingPair}
\end{figure}

If a marked point sharing pair $u,v$ corresponds to the curves $\bu,\bv$ we say that $\bu$ and $\bv$ share the curve $\bc$, where $\bc$ is the $(k-1)$-curve isotopic to the boundary curve $\partial D_{k-1}$.

\begin{lem}\label{mp}
Let $n \ge 3k - 1$, and let $u$ and $v$ be two vertices of $\vC_{k}(S_n)$ that form a marked point sharing pair. For any $\phi \in \Aut \vC_{k}(S_n)$ the vertices $\phi(u)$ and $\phi(v)$ also form a marked point sharing pair.
\end{lem}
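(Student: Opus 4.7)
The plan is to give a purely combinatorial characterization of marked point sharing pairs in $\vC_k(S_n)$, and then argue that each ingredient of this characterization is preserved under $\phi$.

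I claim that two distinct $k$-vertices $u, v$ form a marked point sharing pair if and only if there is a $(k+1)$-vertex $z$ of $\vC_k(S_n)$ such that both $u$ and $v$ lie on the small side of $z$. For the forward direction, the boundary of the union $D_{k-1} \cup D_1 \cup D_1$ of the three smallest complementary pieces of $S_n \setminus (\bu \cup \bv)$ is a $(k+1)$-curve $\bz$; since $n \ge 3k - 1$ gives $n - k - 1 \ge k + 1$ (for $k \ge 3$), the side of $z$ containing $\bu$ and $\bv$ is indeed the small side. For the reverse direction, the key topological fact is that any $k$-curve contained in a $(k+1)$-disc $D_{\bz}$ is determined up to isotopy by which of the $k+1$ marked points it isolates in the annular region between itself and $\partial D_{\bz}$. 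Distinct $k$-vertices $u, v$ both on the small side of $z$ must therefore isolate distinct marked points; this forces $\bu, \bv$ neither to nest nor to be realized disjointly inside $D_{\bz}$, and $i(\bu, \bv) = 2$ follows. A marked-point count of the four resulting complementary regions gives the distribution $(k-1, 1, 1, n-k-1)$, which is exactly the sharing pair configuration.

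Given the characterization, invariance under $\phi$ is straightforward. By Lemma \ref{vertex}, $\phi$ sends $k$-vertices to $k$-vertices and $(k+1)$-vertices to $(k+1)$-vertices, so $\phi(u), \phi(v)$ are distinct $k$-vertices and $\phi(z)$ is a $(k+1)$-vertex. The ``same side'' relation on pairs of vertices admits the combinatorial description recalled just before Lemma \ref{vertex} and so is preserved by $\phi$; combining this with invariance of $\lfloor n/2 \rfloor$-vertices under $\phi$ shows that ``small side'' is preserved as well. Hence $\phi(u), \phi(v)$ both lie on the small side of $\phi(z)$, and by the characterization they form a marked point sharing pair.

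The main obstacle is the reverse direction of the characterization, where one must rule out non-sharing configurations of two $k$-vertices occupying a common $(k+1)$-envelope. This reduces to a concrete topological analysis of essential curves in a marked disc, and the inequality $n \ge 3k - 1$ is precisely what makes the envelope detectable combinatorially: it ensures both that the $(k+1)$-side of the envelope curve is strictly smaller than the opposite side, and that enough witness vertices remain in $\vC_k(S_n)$ to support the combinatorial characterization of sides. The boundary case $k = 2$ is handled separately, since $\vC_2(S_n) = \vC(S_n)$ and then Korkmaz's theorem provides the conclusion tautologically.
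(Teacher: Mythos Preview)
Your approach is genuinely different from the paper's and in most respects simpler. The paper characterizes sharing pairs using three auxiliary vertices $x,y,z$ (a $(k+1)$-vertex $z$ together with two $k$-vertices $x,y$ satisfying adjacency constraints), and in the reverse direction uses the curves $\bx,\by$ to produce arcs in $D_z$ that pin down the configuration of $\bu,\bv$. You use only the vertex $z$ together with the observation that a $k$-curve inside a $(k+1)$-disc is determined up to isotopy by the single marked point it isolates; this is a cleaner route, and it makes the auxiliary vertices $x,y$ unnecessary for the characterization itself.

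There is, however, one genuine edge case your argument does not cover. Your inequality $n-k-1\ge k+1$ becomes an equality precisely when $n=2k+2$, which under the hypothesis $n\ge 3k-1$ and your assumption $k\ge 3$ occurs for $k=3$, $n=8$. In that situation $z$ is an $\lfloor n/2\rfloor$-vertex and neither side of $\bz$ contains a proper $\lfloor n/2\rfloor$-vertex, so the combinatorial definition of ``small side'' is vacuous: every vertex adjacent to $z$ lies on the small side. Your reverse implication then fails, since one may take $u$ and $v$ to be $3$-vertices on \emph{opposite} sides of $z$; these are disjoint and certainly not a sharing pair. The fix is straightforward: strengthen the characterization to require in addition that $u$ and $v$ lie on the \emph{same} side of $z$ (equivalently, that $u$ and $v$ are not adjacent). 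This extra condition is combinatorial and hence preserved by $\phi$, and with it your topological argument goes through unchanged, since the common side is then necessarily a $(k+1)$-disc.

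A minor point: the assertion ``$i(\bu,\bv)=2$ follows'' deserves a line of justification. One way: cap off $D_z$ to a sphere $S_{k+2}$; then $\bu,\bv$ become $2$-curves whose small discs share exactly one marked point, and by change of coordinates any such pair has geometric intersection number $2$.
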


\begin{proof}
Let $u,v$ be two $k$-vertices of $\in \vC_{k}(S_n)$ with representative curves $\bu,\bv$. We will show that $u,v$ form a marked point sharing pair if and only if there are vertices $x,y,z$ of $\vC_{k}(S_n)$ with the following properties:
\begin{enumerate}
\item Both $u$ and $v$ lie on the small side of the $(k+1)$-vertex $z$;
\item the $k$-vertex $x$ is adjacent to $u$ and $y$ but not $v$; and
\item the $k$-vertex $y$ is adjacent to $v$ and $x$ but not $u$.
\end{enumerate}
First, let $u,v$ be a marked point sharing pair corresponding to curves $\bu,\bv$ with geometric intersection number two. Up to homeomorphism there is a unique configuration for curves $\bu,\bv$ shown in Figure \ref{SharingPair}. The curves $\bu$ and $\bv$ separate $S_n$ into four discs which are homeomorphic to $D_{k-1}$, $D_1$, $D_1$ and $D_{n-k-1}$. Let $\bz$ be the boundary of this final disc. We define $\bx$ and $\by$ to be the curves as shown in Figure \ref{SharingPair}. If we define $x,y,z$ to be the vertices of $\vC_{k}(S_n)$ corresponding to the curves $\bx,\by,\bz$ we see that they satisfy the three conditions above.

Now suppose we have vertices $u,v,x,y$ and $z$ that satisfy the above conditions. The vertex $z$ corresponds to a curve $\bz$ which bounds a disc $D_z$ (with $k+1$ marked points). The vertices $u$ and $v$ correpond to curves contained in the disc $D_z$. The second condition tells us that $x$ corresponds to a curve $\bx$ such that the intersection of $\bx$ with $D_z$ is a set arcs. Each arc in this set is isotopic to some $\bx'$ which lies in a subsurface $Q_x$ homeomorphic to an annulus with a single marked point. The arc $\bx'$ has endpoints on $\bz$, one of the two boundary components of $Q_x$. We want to show that the vertex $u$ corresponds to the other boundary component of $Q_x$.

Given any arc in $Q_x$ with both endpoints in $\bz$, the surface obtained by cutting $Q_x$ along the arc is a disjoint union of an annulus and a disc with one marked point. A boundary component $\bu$ of this annulus is isotopic to the boundary component of $Q_x$ not isotopic to $\bz$. It must therefore be a curve in $D_z$ bounding a disc containing $k$ marked points disjoint from $\bx$. It follows that the curve $\bu$ represents the vertex $u$.

By symmetry, the vertex $v$ corresponds to the boundary component of the equivalent subsurface $Q_y$ which is not isotopic to $\bz$. Define the curve $\by$ and the arc $\by'$ analogously to $\bx$ and $\bx'$. From the second and third conditions, the intersection of $\bx$ and $\by$ with $\bz$ are points as shown in Figure \ref{discs}(i). There exist segments $\gamma_x$ and $\gamma_y$ of $\bz$, with $\gamma_x \cup \gamma_y = \bz$, such that the arc $\bx'$ has endpoints in $\gamma_x$ and the arc $\by'$ has endpoints in $\gamma_y$.

\begin{figure}[h]
\centering
\labellist \hair 1pt
	\pinlabel {(i)} at 135 -10
	\pinlabel {(ii)} at 890 -10
	\pinlabel {$\by$} at 0 230
	\pinlabel {$\by$} at 0 50
	\pinlabel {$\bz$} at 135 290
	\pinlabel {$\bx$} at 270 230
	\pinlabel {$\bx$} at 260 50
	\pinlabel {$\by$} at 690 80
	\pinlabel {$\bz$} at 900 50
	\pinlabel {$\by$} at 1020 200
	\pinlabel {$\bu$} at 760 290
	\pinlabel {$\bv$} at 725 180
	\pinlabel {$\bx$} at 975 140
    \endlabellist
\includegraphics[scale=.3]{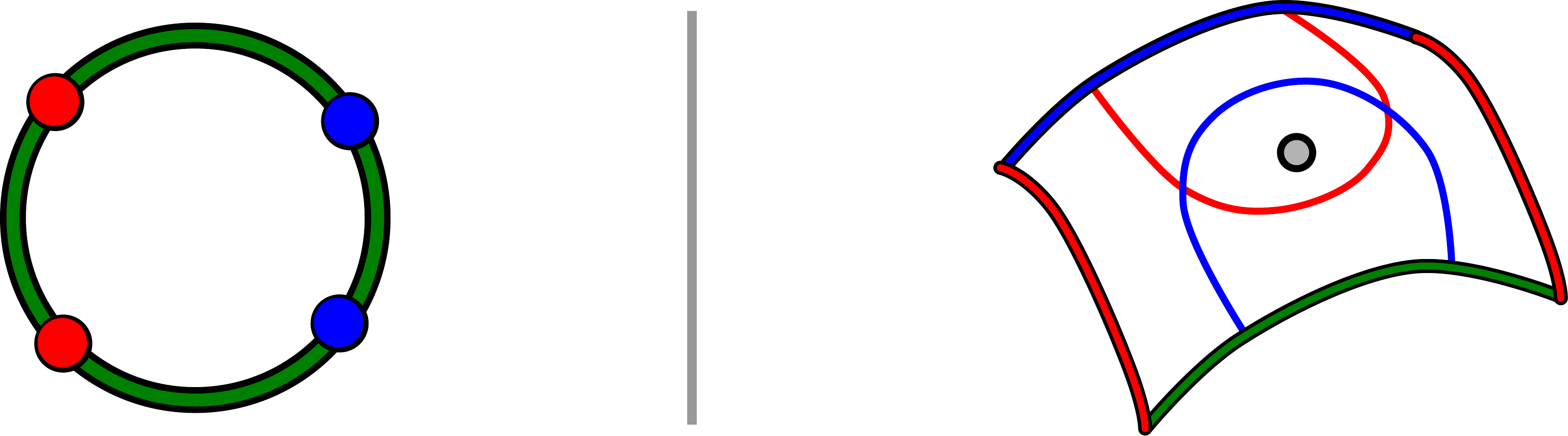}
\caption{(i) The curves $\bx$ and $\by$ intersect $\bz$ as shown.  (ii) The intersection of the curves $\bu$ and $\bv$ is restricted to the boundary of a disc containing a single marked point.}
\label{discs}
\end{figure} 

It follows that the intersection of the arc $\by'$ with $Q_x$ is a set of two isotopic arcs. If we cut along one of these arcs then since $\bu$ intersects $\bv$ they must take the form shown in Figure \ref{discs}(ii) where they intersect exactly twice.
If two simple closed curves in $D_z$ intersect in two points then they divide $D_z$ into four regions, one of which contains $\bz$. Given $u$ and $v$ are both $k$-vertices and $z$ is a $k+1$-vertex, it must be that two regions are homeomorphic to $D_1$, one is homeomorphic to $D_{k-1}$ and the region containing $\bz$ is an annulus. This satisfies the definition of a marked point sharing pair.
\end{proof}

We now move on to show that for a given automorphism $\phi \in \Aut \vC_{k}(S_n)$ we can define a unique extension $\widehat \phi \in \Aut \vC_{k-1}(S_n)$. We do this by first introducing the \emph{graph of sharing pairs} and showing that it consists of infinitely many connected components, each corresponding to a $(k-1)$-vertex of $\vC_{k-1}(S_n)$.

\subsection*{Graph of Sharing Pairs.} We call three $k$-vertices of $\vC_k(S_n)$ $u,v,w$ a \emph{sharing triple} if pairwise they form marked point sharing pairs and correspond to $k$-curves that each share the same $(k-1)$-curve $\bc$. We construct a graph $\mathcal{SP}$ with vertices corresponding to sharing pairs. Two vertices share an edge in $\mathcal{SP}$ if they correspond to sharing pairs $u,v$ and $v,w$ such that $u,v,w$ form a sharing triple. It is clear that sharing triples are characteristic in the graph $\vC_{k}(S_n)$ since sharing pairs are characteristic and the three vertices $u,v$, and $w$ must also lie on the small side of a unique $(k+2)$-vertex.

If two vertices are connected in $\mathcal{SP}$ then they correspond to $k$-curves that share the same $(k-1)$-curve. This implies that $\mathcal{SP}$ is made up of various disconnected components. We will write $\mathcal{SP}(\bc)$ for the components relating to sharing pairs corresponding to curves that share the curve $\bc$.

\begin{lem}\label{ShTr}
Let $S_n$ be a sphere with $n \ge 3k-1$. Let $\mathcal{SP}$ be the graph of sharing pairs associated to $\vC_{k}(S_n)$ and let $\bc$ be a curve shared by a sharing pair. The subgraph $\mathcal{SP}(\bc)$ is a single connected component of $\mathcal{SP}$.
\end{lem}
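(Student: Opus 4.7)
The plan is to identify the graph $\mathcal{SP}(\bc)$ explicitly as a graph on the $2$-subsets of an $(n-k+1)$-element set, with edges between $2$-subsets sharing a single element. This graph (sometimes called a Johnson graph) is well known to be connected whenever the underlying set has at least $3$ elements, and this is what will follow from the hypothesis $n \ge 3k-1$.

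First I would describe the vertex set of $\mathcal{SP}(\bc)$. The curve $\bc$ bounds the disc $D_{k-1}$ with $k - 1$ marked points. Since the four complementary regions of a sharing pair $u,v$ with shared curve $\bc$ are precisely $D_{k-1}$, two discs with a single marked point each, and a disc with $n - k - 1$ marked points, each of $\bu$ and $\bv$ must bound a disc of the form $D_{k-1} \cup \{p\}$ for some marked point $p \in S_n \setminus D_{k-1}^\circ$. The classification of simple closed curves in a disc with marked points then gives a unique isotopy class of such a $k$-curve for each choice of $p$. Hence the set $V(\bc)$ of $k$-curves participating in a sharing pair with shared curve $\bc$ is in bijection with the $n - k + 1$ marked points of $S_n \setminus D_{k-1}^\circ$.

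Next I would verify the realizability of sharing pairs and sharing triples. Given any two distinct marked points $p_i, p_j$ of $S_n \setminus D_{k-1}^\circ$, representing $\bu_i$ and $\bu_j$ as the boundaries of regular neighborhoods of $D_{k-1}$ together with disjoint simple arcs from $\bc$ to $p_i$ and $p_j$ respectively gives two curves that intersect in exactly two points, whose four complementary regions are $D_{k-1}$, two single-marked-point discs, and a disc with $n - k - 1$ marked points. So $\bu_i$ and $\bu_j$ form a sharing pair with shared curve $\bc$. The analogous picture with three pairwise disjoint arcs shows that any three distinct elements of $V(\bc)$ form a sharing triple.

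Thus $\mathcal{SP}(\bc)$ is the graph on $\binom{V(\bc)}{2}$ in which $\{p_i, p_j\}$ and $\{p_j, p_l\}$ are adjacent for all distinct indices $i, j, l$. This graph is connected whenever $|V(\bc)| \ge 3$, and since $n \ge 3k - 1$ with $k \ge 3$ (required for $\bc$ to be an essential $(k-1)$-curve) we have $|V(\bc)| = n - k + 1 \ge 2k \ge 6$, finishing the proof. The main technical step is the realizability argument above: one must confirm that the explicit picture actually achieves minimal intersection number $2$ and the correct four-region decomposition, rather than some pair of curves with higher intersection number or a wrong complementary configuration. Once that is in hand, the connectedness of $\mathcal{SP}(\bc)$ is immediate from its combinatorial structure.
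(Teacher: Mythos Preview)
There is a genuine error in your identification of the vertex set $V(\bc)$. You assert that for each marked point $p$ outside $D_{k-1}$ there is a \emph{unique} isotopy class of $k$-curve whose small side contains $D_{k-1}$ together with $p$, appealing to a ``classification of simple closed curves in a disc with marked points.'' No such uniqueness holds: in a sphere (or disc) with at least four marked points, the isotopy class of an essential simple closed curve is \emph{not} determined by the partition of marked points it induces. Concretely, fix any curve $\bu$ of the type you describe, choose a marked point $q$ in the small disc of $\bu$, and let $\delta$ be a curve in the complement of $D_{k-1}$ separating $\{p,q\}$ from the remaining marked points. Then $\delta$ intersects $\bu$ essentially, so the Dehn twist $T_\delta(\bu)$ is not isotopic to $\bu$; yet $T_\delta$ fixes $\bc$ and every marked point, so $T_\delta(\bu)$ is again a $k$-curve disjoint from $\bc$ whose small disc contains $D_{k-1}$ and $p$, and it participates in the sharing pair $(T_\delta(\bu),T_\delta(\bv))$ with shared curve $\bc$. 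Hence $V(\bc)$ is infinite, the map to marked points is far from injective, and the Johnson-graph model collapses entirely.

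The paper's argument accounts for exactly this phenomenon. Rather than parametrising $V(\bc)$ by marked points, it lets the (infinite) group $\Mod(S_n,D_{k-1})$ of mapping classes fixing $D_{k-1}$ act on $\mathcal{SP}(\bc)$; this action is transitive on vertices. One then applies Putman's connectivity criterion: choosing a generating set of half twists about $2$-curves, it suffices to check that a fixed base sharing pair and its image under each generator are joined by an edge in $\mathcal{SP}(\bc)$, which is verified by exhibiting an explicit sharing triple. Your picture with arcs to $p_i,p_j,p_l$ is essentially this local step, but it cannot replace the global transitivity argument.
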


The proof of Lemma \ref{ShTr} makes use of the following result of Andrew Putman \cite[Lemma 2.1]{PutmanConnect}.

\begin{lem}[Putman]\label{Put}
Let $G$ be a group acting on a simplicial complex $X$ with $v$ a fixed vertex in $X^0$. Let $H$ be a set of generators of $G$ and such that:
\begin{enumerate}
\item for all $u \in X^0$, the orbit $G \cdot v$ intersects the connected component of $X$ containing $u$; and
\item for all $h \in H^{\pm 1}$, there is a path $P_h$ in $X$ from $v$ to $h \cdot v$.
\end{enumerate}
Then $X$ is connected.
\end{lem}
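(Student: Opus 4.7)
The plan is to show that every vertex of $X$ lies in the connected component of $v$; by condition (1), it suffices to prove that for every $g \in G$ the vertex $g \cdot v$ is in the same connected component as $v$.

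First I would fix $g \in G$ and write it as a word $g = h_1 h_2 \cdots h_m$ with each $h_i \in H^{\pm 1}$, which is possible since $H$ generates $G$. I then argue by induction on $m$ that there is a path in $X$ from $v$ to $g \cdot v$. The base case $m = 0$ is trivial since $g = e$. For the inductive step, assume there is a path from $v$ to $g' \cdot v$ where $g' = h_1 \cdots h_{m-1}$. By hypothesis (2), there is a path $P_{h_m}$ from $v$ to $h_m \cdot v$. Applying the simplicial automorphism $g'$ to $P_{h_m}$ yields a path from $g' \cdot v$ to $g' h_m \cdot v = g \cdot v$. Concatenating with the inductive path gives a path from $v$ to $g \cdot v$.

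With this claim in hand, the orbit $G \cdot v$ lies entirely in the connected component of $v$. Combined with condition (1), which asserts that every vertex $u \in X^0$ shares a connected component with some element of $G \cdot v$, we conclude that every vertex lies in the component of $v$, so $X$ is connected. (For completeness one should note that $X$ is nonempty, which is implicit in the existence of the fixed vertex $v$.)

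There is no substantive obstacle here: the entire content of the argument is the observation that the $G$-action carries paths to paths, so the two hypotheses together with an induction on word length in $H^{\pm 1}$ close immediately. The only place one must be careful is to verify that the translate $g' \cdot P_{h_m}$ really is a path in $X$, which follows from the fact that $G$ acts by simplicial automorphisms and thus preserves edges.
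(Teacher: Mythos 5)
Your argument is correct: the induction on word length in $H^{\pm 1}$, translating the paths $P_{h}$ by the simplicial action and concatenating, then invoking condition (1) to pull every vertex into the component of $v$, is exactly the standard proof of this lemma. The paper does not reprove it but simply cites Putman, and your proof coincides with Putman's original argument, so there is nothing to add.
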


\begin{proof}[Proof of Lemma \ref{ShTr}]
Let $u,v$ be a marked point sharing pair corresponding to the curves $\bu,\bv$ that share the $k-1$-curve $\bc$. Let $D_{k-1}$ be the small associated disc of the curve $\bc$. Let $\Mod(S_n, D_{k-1})$ be the subgroup of $\Mod(S_n)$ that fixes the disc $D_{k-1}$. All curves that share $\bc$ must be of the form $f(\bu), f(\bv)$ for some $f \in \Mod(S_n, D_{k-1})$. This satisfies the first condition in Lemma \ref{Put} for the graph $\mathcal{SP}(\bc)$. It remains to show that the second condition is satisfied.
\begin{figure}[t]
\centering
\labellist \hair 1pt
	\pinlabel {$H(\bv)$} at 175 93
	\pinlabel {$\bv$} at 140 35
	\pinlabel {$\bu$} at 75 15
    \endlabellist
\includegraphics[scale=.6]{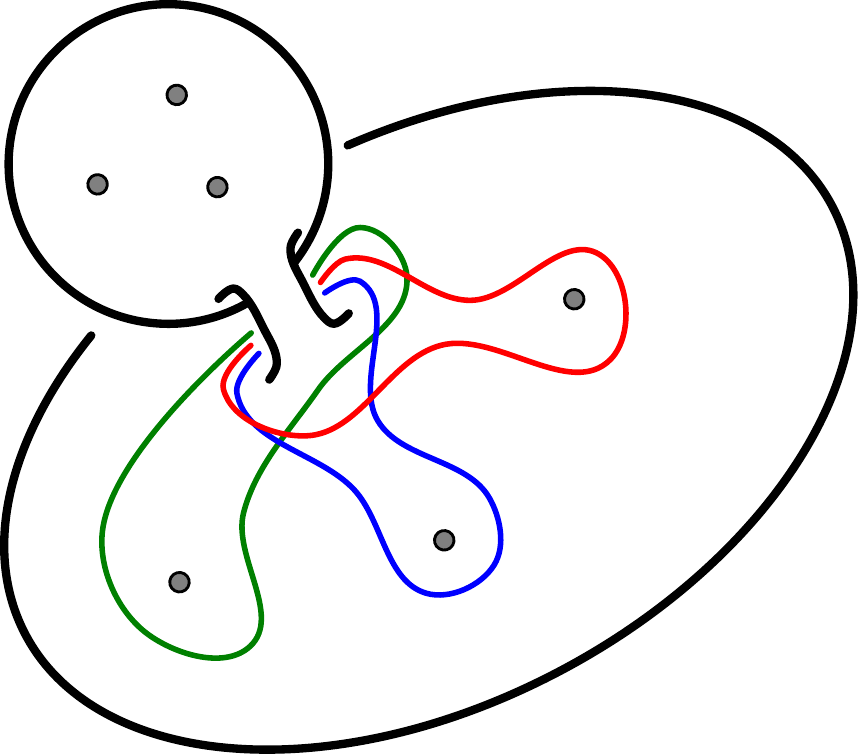}
\caption{The vertices corresponding to the sharing pairs $\bu, \bv$ and $H(\bu), H(\bv)$ share an edge in $\mathcal{SP}$ as $\bu, \bv, H(\bv)$ define a sharing triple.}
\label{SharingTriple}
\end{figure}

We can find a finite generating set for $\Mod(S_n, D_{k-1})$ consisting of half twists about $2$-curves. We can choose twists such that only one such curve intersects $\bu$ and a one curve intersects $\bv$. By Lemma \ref{Put} and symmetry we need only show that the vertex of $\mathcal{SP}(\bc)$ corresponding to the curves $\bu,\bv$ is connected to the vertex corresponding to the curves $\bu, H(\bu)$, where $H$ is a half twist fixing the curve $\bv$. As shown in Figure \ref{SharingTriple} the vertices share an edge.
\end{proof}

We now prove Theorem \ref{etas} which states that the homomorphism $\eta_k$ is an isomorphism. Note that the subgraph $\vC_{2}(S_n)$ is precisely the curve graph $\vC(S_n)$ itself. We use the result of Korkmaz showing that the natural homomorphism $\Mod(S_n) \rightarrow \Aut \vC_{2} (S_n)$ is an isomorphism \cite{MK}.

\begin{proof}[Proof of Theorem \ref{etas}]
By Lemma \ref{Injectivity} we have that $\eta_k$ is injective. As noted above, we have that $\eta_2$ is an isomorphism. Now we will use induction to show that if $\eta_{k-1}$ is an isomorphism then $\eta_k$ is surjective, hence an isomorphism. Let $\phi \in \Aut \vC_{k}(S_n)$ be an automorphism.  By Lemma \ref{ShTr} there exists a well defined automorphism $\widehat \phi$ of the vertices of $\vC_{k-1}(S_n)$ such that $\widehat \phi$ restricts to $\phi$. We will show that $\widehat \phi$ in fact extends to an automorphism of $\vC_{k-1}(S_n)$.

Suppose $u$ and $v$ are vertices of $\vC_{k-1}(S_n)$. If both vertices are also in $\vC_{k}(S_n)$ then this is clear. If $v$ is a vertex of $\vC_{k}(S_n)$ and $u$ is not then they are adjacent if and only if there exists some vertex $w$ adjacent to $v$ in $\vC_{k}(S_n)$ which contributes to a marked point sharing pair of $u$. Suppose now that neither $u$ nor $v$ are vertices of $\vC_{k}(S_n)$, then they are adjacent if and only if there are adjacent vertices $w_1$ and $w_2$ in $\vC_{k}(S_n)$ such that $w_1$ contributes to a sharing pair of $u$, and $w_2$ contributes to a sharing pair of $v$.

By assumption there exists some $f \in \Mod(S_n)$ whose image in $\Aut \vC_{k-1}(S_n)$ is precisely $\widehat \phi$. Since the restriction of $\widehat \phi$ to $\vC_{k}(S_n)$ is $\phi$ it follows that the image of $f$ in $\Aut \vC_{k}(S_n)$ is indeed $\phi$.
\end{proof}

\section{Graphs of Regions}\label{GOR}

Recall that the enveloping disc $D_R$ of a region $R$ is such that if any disc $D$ sastisfies $R \subset D$ then $n(D_R) \le n(D)$. We now fix the value of $k$ with respect to a subset $A \subset \vR(S_n)$ as follows:
\[
k = \min\{ n(D_R)\ |\ R \text{ is represented in } A\}.
\]
\noindent In this section we will construct a homomorphism from $\Aut \vG_{A}(S_n)$ to $\Aut \vC_{k}(S_n)$. We will then use this homomorphism and Theorem \ref{etas} to prove Theorem \ref{BigDaddy}.

The first step is defining the set map
\[
\Phi \ :\ \{ \text{vertices of }\vC_{k}(S_n) \} \rightarrow \{ \text{subgraphs of }\vG_{A}(S_n)\}.
\]
For any vertex $v \in \vC_{k}(S_n)$ corresponding to a curve $\bv$ we define $\Phi(v)$ to be the full subgraph spanned by vertices of $\vG_{A}(S_n)$ that correspond to regions contained in either of the associated discs of $\bv$.

\subsection*{Joins}
Recall that a \emph{join} $X$ is a full subgraph spanned by subsets of vertices $V_1, \dots, V_N$ such that every vertex in $V_i$ spans an edge with every vertex in $V_j$, for all $i \ne j$.  We assume that no $V_i$ is itself a join and we call the set of $V_i$ the \emph{components} of $X$.  We say that a join $X$ is \emph{perfect} if it has at most three components, one or two which consist of infinitely many vertices, and at most one component consisting of a single vertex.  A perfect join $X$ is \emph{maximal} if there are no vertices $v \notin X$ such that $X \cup \{v\}$ spans a perfect join.

\begin{figure}[t]
\centering
\includegraphics[scale=.75]{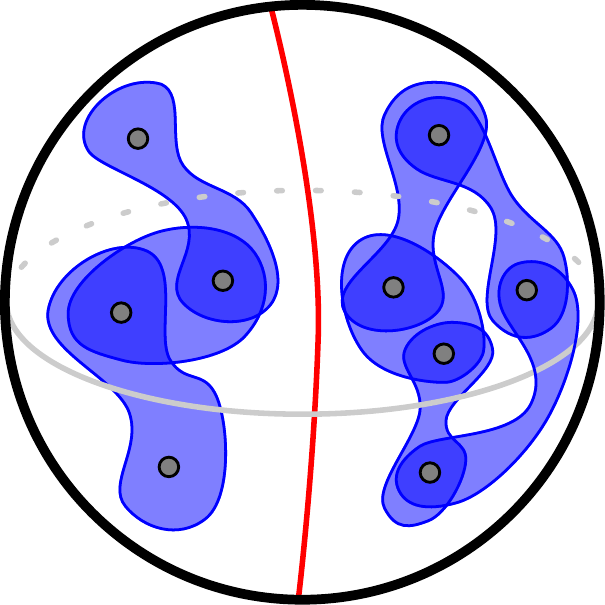}
\caption{Each vertex in $\vC_2(S_9)$ corresponds to a maximal perfect join in $\vG_A(S_9)$, where discs containing two marked points are represented in $A$.}
\label{Joins}
\end{figure}

\begin{lem}\label{partial1}
Let $A \subset \vR(S_n)$ and assume that $\vG_{A}(S_n)$ contains a small vertex. Then the function
\begin{center}$\Phi \ :\  \{ \text{vertices of }\vC_{k}(S_n) \} \rightarrow \{ \text{maximal perfect joins in }\vG_{A}(S_n)\}$\end{center}
is a bijection.
\end{lem}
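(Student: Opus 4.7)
The plan is to construct a two-sided inverse for $\Phi$: every maximal perfect join in $\vG_A(S_n)$ should arise as $\Phi(v)$ for a unique $v \in \vC_k(S_n)$. I would split the argument into three tasks, namely well-definedness of $\Phi$ into the set of maximal perfect joins, injectivity, and surjectivity, with surjectivity carrying the main technical weight.

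For well-definedness, fix $v \in \vC_k(S_n)$ corresponding to a curve $\bv$ with associated discs $D_l, D_m$, where $k \le l \le m$. I would decompose the vertex set of $\Phi(v)$ into $V_l$ (vertices whose regions admit a representative in $D_l$), $V_m$ (regions with a representative in $D_m$), and, when the annulus parallel to $\bv$ represents an element of $A$, a singleton component $V_a$. Regions in $V_l$ are disjoint from those in $V_m$, and the annulus is adjacent to both classes, so $\Phi(v)$ is a join. Minimality of $k$ combined with $l \ge k$ gives non-emptiness of $V_l$, and non-emptiness of $V_m$ follows since $n \ge 3k - 1$ implies $m = n - l \ge k$. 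A short case analysis on the possible sizes of $V_l$ and $V_m$ verifies that the components satisfy the definition of perfect. For maximality, a putative additional vertex $u$ would correspond to a region $R$ that essentially intersects $\bv$ or sits in neither disc and is not parallel to $\bv$; choosing specific regions in $V_l \cup V_m$ whose representatives together fill $S_n$ up to a regular neighbourhood of $\bv$ produces a vertex of $\Phi(v)$ not adjacent to $u$, contradicting the join condition.

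Injectivity then follows quickly: distinct curves $\bv \ne \bw$ yield different partitions of the regions of $S_n$ into ``sides'', hence $\Phi(v) \ne \Phi(w)$ as subgraphs of $\vG_A(S_n)$.

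The main obstacle is surjectivity. Given a maximal perfect join $J$ with infinite components $W_1, W_2$, I would first choose representatives for all regions corresponding to vertices in $W_1$ that are jointly disjoint from fixed representatives of those in $W_2$; these representatives then lie in disjoint subsurfaces $U_1, U_2 \subset S_n$. Using the small vertex hypothesis $n \ge 3k - 1$ together with the minimality of $k$, each $W_i$ contains a region whose enveloping disc has exactly $k$ marked points, and I would argue that this forces $U_1 \cup U_2$ to exhaust $S_n$ up to an annular neighbourhood of a single essential simple closed curve $\bv$. This curve has at least $k$ marked points on each side, so it represents a vertex $v \in \vC_k(S_n)$, and maximality of $J$ then gives $J = \Phi(v)$; moreover, when $J$ has a singleton component it must be the annulus parallel to $\bv$, providing an independent identification of the curve. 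The chief technical difficulty is extracting $\bv$ canonically from the abstract join data, ruling out ``spurious'' separating curves that could be produced by non-maximal configurations, and this is precisely the point at which the small vertex hypothesis and the minimality of $k$ interact.
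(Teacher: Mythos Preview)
Your overall architecture matches the paper's: verify that $\Phi$ lands in maximal perfect joins, note injectivity, and then recover a curve from an arbitrary maximal perfect join for surjectivity. The well-definedness and injectivity portions are essentially identical to what the paper does.

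Your surjectivity sketch, however, leans on a claim that is both unjustified and unnecessary. You assert that, for a maximal perfect join $J = W_1 * W_2$, ``each $W_i$ contains a region whose enveloping disc has exactly $k$ marked points,'' and that this somehow forces $U_1 \cup U_2$ to exhaust $S_n$ up to an annulus. You do not explain either implication, and the first is not available a priori: before you know $J = \Phi(v)$ you have no control over which homeomorphism types of regions populate a given component, so the existence of a $k$-region in \emph{each} $W_i$ is exactly the kind of statement that would follow \emph{from} the conclusion rather than lead to it. The paper bypasses this entirely and argues directly from maximality: take $R_i$ to be a connected subsurface containing all regions of $W_i$; if $R_1$ were not a disc it would have a complementary disc $D$ not containing $R_2$, and any vertex supported near $D$ would be adjacent to all of $W_2$ but not lie in $J$, contradicting maximality. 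The same maximality argument forces $S_n \setminus (R_1 \cup R_2)$ to be an annulus, whose core is the desired curve. This is both shorter and avoids the circularity in your outline; you should replace your $k$-region step with this direct use of maximality.
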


\begin{proof}
We first show that for any vertex $v \in \vC_k(S_n)$ the subgraph $\Phi(v)$ is a maximal perfect join.  If $v$ corresponds to the curve $\bv$ then the large associated disc of $\bv$ contains infinitely many regions represented in $A$.  If the small associated disc contains infinitely many vertices then $\Phi(v)$ is a perfect join. In this case two components contain infinitely many vertices and a possible third component contains a single vertex corresponding to the annulus whose boundary components are isotopic to the curve $\bv$.  Any vertex $u \in \vG_A(S_n)$ which is not in $\Phi(v)$ must correspond to a region that intersects both associated discs of $\bv$.  Hence $\Phi(v) \cup \{u\}$ is not a perfect join and so $\Phi(v)$ is maximal.  If the small associated disc of $\bv$ contains a single subsurface represented in $A$, then again $\Phi(v)$ is perfect with one component containing infinitely many vertices and one component containing the vertex corresponding to either the small associated disc, or its annular boundary.  As before, any vertex not in $\Phi(v)$ must intersect at least one vertex in each component.  It follows that $\Phi(v)$ is a maximal perfect join.

We will now show that every maximal perfect join is contained in the image of $\Phi$.  Suppose $X$ is a maximal perfect join with two components containing infinitely many vertices.  The vertices in these components correspond to vertices contained in two connected regions $R_1$ and $R_2$.  Suppose $R_1$ is not a disc.  Then there exists a complementary disc $D$ of $R_1$ that does not contain $R_2$.  We can therefore find a vertex of $\vG_A(S_n)$ that is not in $X$ yet spans an edge with every vertex in one of the components of $X$, hence $X$ is not maximal.  Up to relabeling we have shown that both $R_1$ and $R_2$ are discs. It is clear that the discs $R_1$ and $R_2$ are disjoint up to homotopy.  The subsurface $Q:=S_n \setminus \{R_1, R_2\}$ is an annulus. Indeed, otherwise there exists a region represented in $A$ that intersects $R_1$ and is disjoint from $R_2$ and the existence of such a region contradicts the maximality of $X$.  We have therefore shown that any maximal perfect join of this type is equal to the image of a vertex of $\vC_k(S_n)$.  Namely, $X=\Phi(v)$ where $v$ is the vertex that corresponds to the boundary of the annulus $Q$.

A similar proof shows that maximal perfect joins with a single component consisting of infinitely many vertices belongs to the image of $\Phi$ and the details are left to the reader.
\end{proof}

The previous result says that any automorphism of $\vG_A(S_n)$ induces a permutation of the vertices of $\vC_k(S_n)$.  We now prove that this permutation extends to an automorphism of the graph $\vC_{k}(S_n)$.  We say that two joins are \emph{compatible} if they can be written as $V_1 * V_2$ and $W_1 * W_2$ with $V_1 \subset W_1$.
We say that a subgraph $V$ is \emph{compatible} with a subgraph $W$ if $V$ can be written as $V_1 * V_2$ where $V_1$ is nonempty and $V_1 \subset W$.

\begin{lem}\label{partial2}
Let $A \subset \vR(S_n)$ and assume that $\vG_{A}(S_n)$ has no isolated vertices. Let $v,w$ be vertices of $\vC_{k}(S_n)$. Then $v$ and $w$ are connected by an edge in $\vC_{k}(S_n)$ if and only if $\Phi(v)$ is compatible with $\Phi(w)$.	
\end{lem}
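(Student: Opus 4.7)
The plan is to prove the two implications separately.

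For the forward direction, suppose $v$ and $w$ are adjacent in $\vC_k(S_n)$. Then the curves $\bv$ and $\bw$ have disjoint representatives, so one associated disc $D$ of $\bv$ is contained up to isotopy in one associated disc $D'$ of $\bw$. Every region represented in $A$ contained in $D$ is then contained in $D'$, so the component $V_1$ of $\Phi(v)$ coming from regions in $D$ sits inside the component $W_1$ of $\Phi(w)$ coming from regions in $D'$. Grouping the remaining components of $\Phi(v)$ into $V_2$ and those of $\Phi(w)$ into $W_2$ (both nonempty since each $\Phi$ is a perfect join with at least two components) yields $\Phi(v)=V_1*V_2$ and $\Phi(w)=W_1*W_2$ with $V_1\subset W_1$, establishing compatibility.

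For the reverse direction, assume $\Phi(v)=V_1*V_2$ and $\Phi(w)=W_1*W_2$ with $V_1\subset W_1$, and suppose for contradiction that $\bv$ and $\bw$ meet essentially. Using the description of components from Lemma \ref{partial1}, $V_1$ is a union of components of $\Phi(v)$, each of one of two types: an infinite component corresponding to an associated disc of $\bv$, or a single-vertex component corresponding either to an annular neighborhood of $\bv$ or to a small associated disc containing a unique region represented in $A$. If any component of $V_1$ is a single vertex, then $V_1\subset W_1$ forces the unique region it represents to lie in an associated disc of $\bw$, which directly forces $\bv$ and $\bw$ to be disjoint, a contradiction. Otherwise all components of $V_1$ are infinite; pick one, say $C$, corresponding to an associated disc $D_v$ of $\bv$. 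Since $\bv$ and $\bw$ are non-isotopic and meet essentially, $D_v$ must contain marked points on both sides of $\bw$. By the definition of $k$ there is a region $R_0$ represented in $A$ whose enveloping disc has exactly $k$ marked points; applying a suitable element of $\Mod(S_n)$ carries $R_0$ to a region $R$ in its orbit whose enveloping disc lies inside $D_v$ and contains marked points from both sides of $\bw$. The vertex for $R$ then lies in $C\subset V_1$, yet $R$ has no representative contained in any single associated disc of $\bw$. Since $W_1$ is a proper union of components of $\Phi(w)$, its only possible component not arising from an associated disc of $\bw$ is the single-vertex component for the annular neighborhood of $\bw$, which has a different topological type from $R$; hence the vertex for $R$ is not in $W_1$, the desired contradiction.

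The main obstacle is the construction of the region $R$ in the infinite-component case, which requires verifying that $D_v$ contains $k$ marked points distributed on both sides of $\bw$; this follows from the fact that $v\in\vC_k(S_n)$ and that $\bv,\bw$ are non-isotopic essential curves meeting essentially. A secondary subtlety lies in handling the single-vertex cases cleanly, which in each subcase reduces to matching the topological type of the representing region against the possibilities allowed by $W_1$.
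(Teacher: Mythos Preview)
Your forward direction is fine and matches the paper's (terse) argument.

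The reverse direction has a genuine gap. The claim that ``$D_v$ must contain marked points on both sides of $\bw$'' is false in general. For a counterexample, take two curves $\bv$ and $\bw$ in $S_n$ that induce the \emph{same} partition of the marked points but are not isotopic: for instance set $\bw = T_c(\bv)$ for any curve $c$ with $i(\bv,c)>0$, so that $i(\bv,\bw)=i(\bv,c)^2>0$ while $\bw$ separates the marked points exactly as $\bv$ does. Then every marked point of $D_v$ lies on a single side of $\bw$, and your construction of the region $R$ (whose enveloping disc is supposed to contain marked points from both sides of $\bw$) collapses. Your own justification of this step, ``this follows from the fact that $v\in\vC_k(S_n)$ and that $\bv,\bw$ are non-isotopic essential curves meeting essentially'', does not rule out this case; the minimal-position/no-bigon argument only guarantees that each innermost arc of $\bw$ in $D_v$ cuts off a marked point, not that those innermost discs lie on both sides of $\bw$.

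What you actually need in the infinite-component case is a region $R\subset D_v$ representing an element of $A$ that is not homotopic into either associated disc of $\bw$. That is a statement about how the boundary curves (or enveloping-disc boundaries) of the infinitely many $A$-regions inside $D_v$ interact with the essential arcs of $\bw\cap D_v$, not about the distribution of marked points; one wants these boundary curves to fill $D_v$, forcing one of them to cross $\bw$. The paper's own proof simply asserts the conclusion without supplying this mechanism, so there is no alternative argument to compare against, but the specific one you propose does not go through. A smaller omission: in your single-vertex case you should also dispose of the possibility that the singleton lands on the annular component of $\Phi(w)$ rather than in an associated disc of $\bw$; this is easy (it forces $\bv\simeq\bw$), but it is not covered by ``lie in an associated disc of $\bw$''.
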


\begin{proof}
If the vertices $v$ and $w$ are connected by an edge in $\vC_{k}(S_n)$ then they correspond to disjoint curves. If $\Phi(v)=V_1 * V_2$ then it follows that up to renumbering, $V_1 \subset \Phi(w)$. Suppose $v$ and $w$ are not connected by an edge in $\vC_{k}(S_n)$. The vertices do not correspond to disjoint curves and so there is no way to choose $V_1$ and $W$ such that $V_1 \subset \Phi(w)$. This is a contradiction and so the result follows.
\end{proof}

The next lemma relates the connectivity of the graph of regions $\vG_A(S_n)$ to the connectivity of the subgraph $\vC_k(S_n)$ where $k$ is the value stated at the beginning of this section.

\begin{lem}\label{BigDaddyHelp}
Let $A \subset \vR(S_n)$ so that $\vG_{A}(S_n)$ has no corks or holes and is connected.
\begin{enumerate}
\item Let $R$ be a region represented in $A$; then a simplex in $\vC_{k}(S_n)$ corresponds to $\partial R$.
\item The complex $\vC_{k}(S_n)$ is connected.
\end{enumerate}
\end{lem}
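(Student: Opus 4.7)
The plan is to use part (1) as a bridge between regions represented in $A$ and simplices of $\vC_k(S_n)$, and then to chain these simplices together using the connectedness of $\vG_{A}(S_n)$. For part (1), let $R$ be represented in $A$ with boundary curves $\bc_1, \dots, \bc_b$. These curves are pairwise disjoint, so they automatically span a simplex of the curve graph; I need only check that each $\bc_i$ is a vertex of $\vC_k(S_n)$, that is, that both associated discs of $\bc_i$ contain at least $k$ marked points. One side of $\bc_i$ is a complementary disc $D_i$ of $R$: the no-holes hypothesis produces a subsurface $R'$ of $D_i$ represented in $A$, whose enveloping disc sits inside $D_i$, giving $n(D_i) \geq n(D_{R'}) \geq k$. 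The other side is a disc containing $R$, so by the minimality clause in the definition of $D_R$ it has at least $n(D_R) \geq k$ marked points.

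For part (2), combining part (1) with the connectedness of $\vG_{A}(S_n)$ immediately places all vertices of $\vC_k(S_n)$ arising as boundary components of regions represented in $A$ into a single connected component $C$: two adjacent vertices of $\vG_{A}(S_n)$ correspond to disjoint regions, so the union of their boundary curves spans a simplex of $\vC_k(S_n)$. The task reduces to showing that every $v \in \vC_k(S_n)$ lies in $C$. Fix, once and for all, a region $R_0$ represented in $A$ realising the minimum $n(D_{R_0}) = k$, which exists by definition of $k$.

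Given $v \in \vC_k(S_n)$ corresponding to a curve $\bv$ whose small associated disc $D$ carries $l \geq k$ marked points, I apply the change-of-coordinates principle to produce $f \in \Mod(S_n)$ mapping $D_{R_0}$ inside $D$. When $l > k$, I position $f(D_{R_0})$ as a proper sub-disc of $D$; by part (1), $\partial f(R_0)$ spans a simplex of $\vC_k(S_n)$ disjoint from $\bv$, and the count $k < l$ prevents any component from being $\bv$-parallel, so $v$ is adjacent in $\vC_k(S_n)$ to vertices of $C$ and hence lies in $C$. When $l = k$, I instead arrange $f(D_{R_0}) = D$; since one boundary component of $R_0$ is always isotopic to $\partial D_{R_0}$ (whether or not $R_0$ is itself a disc), $\bv$ arises directly as a boundary component of the region $f(R_0)$ represented in $A$, so $v \in C$. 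The only delicate step is the marked-point bookkeeping in the $l > k$ case, but once the counts $n(D_{R_0}) = k < l = n(D)$ are tallied, the construction of $f$ is a standard change-of-coordinates exercise.
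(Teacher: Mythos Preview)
Your argument is correct and follows essentially the same route as the paper: for (1) the no-holes hypothesis forces each complementary disc to contain a region represented in $A$ and hence to carry at least $k$ marked points, and for (2) the boundary-simplices of regions along a path in $\vG_{A}(S_n)$ chain together in $\vC_{k}(S_n)$, with an arbitrary vertex attached to this chain via a region placed in one of its associated discs. Your organisation of (2)---first assembling a connected component $C$ from all boundary-simplices and then attaching an arbitrary vertex to $C$ via an explicit change of coordinates---is a mild restructuring of the paper's direct argument connecting two arbitrary vertices, and your case split $l=k$ versus $l>k$ spells out what the paper leaves implicit; one small imprecision is that $D_{R'}$ need not literally sit inside $D_i$, but the needed inequality $n(D_i)\ge n(D_{R'})\ge k$ follows anyway from the minimality of the enveloping disc, since $D_i$ is itself a disc containing $R'$.
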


\begin{proof}
It is clear that $\partial R$ is a union of pairwise disjoint curves, one for each complementary disc of $R$. If $R$ is an annulus then there is a unique vertex in $\vC_{k}(S_n)$ that corresponds to $\partial R$. If $R$ is not an annulus then since the complex $\vG_{A}(S_n)$ has no holes, each complementary disc of $R$ contains a region represented in $A$. It follows that each curve in $\partial R$ has associated discs containing at least $k$ marked points. That is, each curve in $\partial R$ is an $l$-curve for some $l \ge k$.

For the second statement let $u$ and $v$ be vertices of $\vC_{k}(S_n)$. Let $R_0, R_N$ be represented in $A$ such that $u$ corresponds to a curve with associated disc containing $R_0$ and $v$ corresponds to a curve with associated disc containing $R_N$. Now let $a_0,\dots,a_N$ be a path in $\vG_{A}(S_n)$ where the vertices $a_0$ and $a_N$ correspond to the regions $R_0$ and $R_N$ respectively. It follows from the first statement that there exist simplices $\sigma_i$ in $\vC_{k}(S_n)$ corresponding to the vertices $a_i$ of $\vG_{A}(S_n)$. Furthermore, since the vertices $a_i$ and $a_{i+1}$ are adjacent in $\vG_{A}(S_n)$ it follows that $\sigma_i \cup \sigma_{i+1}$ spans a simplex of $\vC_{k}(S_n)$. This implies that the vertices $u$ and $v$ are connected via a path contained in the simplices $\sigma_0 , \dots , \sigma_N$.
\end{proof}

Before completing the proof of the main theorem we note that there is a partial order on vertices of $\vG_{A}(S_n)$. We say that $u \preceq v$ if the link of $v$ is contained in the link of $u$. A vertex is \emph{minimal} when it is minimal with respect to the ordering. We say that a vertex of $\vG_{A}(S_n)$ is \emph{$1$-sided} if it corresponds to a region $R$ such that exactly one of its complementary discs contains a region represented in $A$.

\begin{proof}[Proof of Theorem \ref{BigDaddy}]
Let $\vG_{A}(S_n)$ be a connected graph of regions with a small vertex and no holes or corks. We would like to show that
\[
\eta_A : \Mod(S_n) \rightarrow \Aut \vG_{A}(S_n)
\]
 is an isomorphism. It follows from Lemma \ref{Injectivity} that $\eta_A$ is injective. It remains to show that $\eta_A$ is surjective. We have from Lemmas \ref{partial1} and \ref{partial2} that there is a well-defined map $\partial :\Aut \vG_{A}(S_n) \rightarrow \Aut \vC_{k}(S_n)$ where the image under $\phi \in \Aut \vG_A(S_n)$ of a maximal join determines the automorphism $\partial ( \phi )$. We aim to show that $\partial$ is injective.

Suppose $\partial ( \phi )$ is the identity. Let $v$ be a $1$-sided vertex of $\vG_{A}(S_n)$ corresponding to an annulus, we need to show that $\phi(v)=v$. Suppose $v$ corresponds to an annulus whose boundary components are isotopic to a curve $\bv$. Let $R$ and $Q$ be the small and large associated discs of $\bv$ respectively. We want to find a vertex of $\vC_{k}(S_n)$ corresponding to a curve which is not isotopic to the curve $\bv$. Since $\vG_{A}(S_n)$ is connected it contains a vertex $w$ corresponding to a region contained in $Q$. If $w$ corresponds to an annulus then the desired vertex of $\vC_{k}(S_n)$ corresponds to the unique isotopy class of curve contained in the annulus. If $w$ does not correspond to an annulus then from Lemma \ref{BigDaddyHelp}(1) we can find the desired vertex. We do not consider the case where $w$ corresponds to the disc $Q$ as $\vG_{A}(S_n)$ does not contain corks.

Having found a non-peripheral curve in $Q$ we deduce that there exist vertices of $\vC_{k}(S_n)$ which correspond to curves filling $Q$. Each of these vertices is fixed by $\partial ( \phi )$ by assumption and it follows that $\phi(v)$ corresponds to a region disjoint from $Q$. Since $v$ is a $1$-sided vertex we have that $\phi(v)=v$. It can be shown using a similar argument that if $v$ is a $1$-sided minimal vertex of $\vG_{A}(S_n)$ that does not correspond to an annulus then we can deduce that $\phi(v)=v$.

Now assume that $v$ is any other vertex of $\vG_{A}(S_n)$. Let $D$ be a complementary disc of a region $R$ such that $v$ corresponds to $R$. Since $v$ is not a $1$-sided minimal vertex, $R$ is not an annulus, and $\vG_{A}(S_n)$ does not contain holes we have that there exist vertices that span an edge with $v$ and correspond to regions contained in $D$. We will label the set of all such vertices $X$.

Suppose vertices $u,w \in X$ corresponds to regions $R_u , R_w \subset D$. Let $D_u, D_w$ be the enveloping discs of $R_u$ and $R_w$ respectively.  We will write $u \le w$ if $Q_u \subseteq Q_w$ up to homotopy.  It can be shown that a minimal element with respect to this partial order is a $1$-sided vertex of $\vG_{A}(S_n)$ that either corresponds to an annulus, or is minimal with respect to $\preceq$.
It follows that one of the two conditions below holds:

\begin{enumerate}
\item there exist $1$-sided vertices corresponding to annuli and $1$-sided minimal vertices corresponding to regions that fill the disc $D$, or
\item there exists a $1$-sided vertex of $\vG_{A}(S_n)$ corresponding to the annular boundary of $D$ and no vertices of $\vG_{A}(S_n)$ correspond to non-peripheral subsurfaces of $D$.
\end{enumerate}

Given a vertex $v$ let $Y \subset X$ be the set of all the $1$-sided vertices corresponding to annuli and the $1$-sided minimal vertices that do not correspond to annuli. We conclude that $v$ is the unique vertex of $\vG_{A}(S_n)$ spanning an edge with every vertex in $Y$. We have shown that such vertices are fixed by $\phi$ so $\phi(v)=v$. Hence $\partial$ is injective.

The graph of regions $\vG_{A}(S_n)$ contains a small vertex so $n \ge 3k$. From Lemma \ref{BigDaddyHelp} the graph $\vC_{k}(S_n)$ is connected and by Theorem \ref{etas} the natural homomorphism $\eta_k$ is an isomorphism. The diagram
\[
\begin{tikzcd}
\Mod(S_n) \arrow[hookrightarrow]{rr}{\eta_A} \arrow{dr}[swap]{\eta_k}{\cong} & & \Aut \vG_{A}(S_n) \arrow[hookrightarrow]{dl}{\partial} \\ 
 & \Aut \vC_{k}(S_n) & 
\end{tikzcd}
\]
commutes from the definition of $\partial$ and so both $\eta_A$ and $\partial$ are isomorphisms, completing the proof.
\end{proof}

This proof is analogous to the proof of \cite[Theorem 1.3]{TD}. There is an extra detail to consider when the surface has positive genus. Here, a bijective correspondence is constructed between certain types of joins in a complex of regions associated to a closed surface $\Sigma_{g,0}$ and \emph{dividing sets}, multicurves that separate the surface into two regions.

\section{Normal Subgroups}\label{Subbies}

In this section we prove Theorem \ref{Proppy} which determines the automorphism groups of many normal subgroups of $\mcg(S_n)$ and $\mcg(S_n,p)$. This result then implies Theorem \ref{BraidTHM} and Theorem \ref{HJF} as discussed in Section \ref{ResultsApps}.

We will define a graph of regions $\vG_{N}(S_n)$ related to a normal subgroup $N$ of either $\mcg(S_n)$ or $\mcg(S_n,p)$ and then use Theorem \ref{BigDaddy} to prove the result. The vertices of $\vG_{N}(S_n)$ will correspond to the supports of so-called \emph{basic subgroups} of $N$. We first state some results which arise from the study of the Nielsen-Thurston classification of elements of $\mcg(S_n)$.

A \emph{partial pseudo-Anosov} element of $\mcg(S_n)$ is the image of a pseudo-Anosov element of $\mcg(R)$ under the map $\mcg(R) \rightarrow \mcg(S_n)$ induced by the inclusion of a region $R$ in the surface $S_n$. The region $R$ is called the \emph{support} of the partial pseudo-Anosov element and is unique up to isotopy; see \cite{BLM} for more details.

\subsection*{Pure mapping classes.} Using the terminology of Ivanov in \cite{TeichModGroups} we call an element $f \in \Mod(S_n)$ \emph{pure} if it can be written as a product $f_1 \dots f_k$ where
\begin{enumerate}
\item each $f_i$ is a partial pseudo-Anosov element or a power of a Dehn twist; and
\item if $i \ne j$ then the supports of $f_i$ and $f_j$ have disjoint non-homotopic representatives.
\end{enumerate}

The $f_i$ are called the \emph{components} of $f$. We note that the pure elements of $\Mod(S_n)$ are also elements of $\mcg(S_n)$. Furthermore, while the components of a pure mapping class are not canonical in general, the support of a pure element (the union of the supports of the components) is canonical \cite{BLM}.

\subsection*{Pure subgroups.} We call a subgroup of $\mcg(S_n)$ or $\mcg(S_n,p)$ \emph{pure} if each of its elements is pure. The support of a pure subgroup is well-defined and is invariant under passing to finite index subgroups.

Let $\mathcal B$ be a subset of $m < n$ marked points of a surface $\Sigma_{g,n}^b$. We denote by $\mcg(\Sigma_{g,n}^b,\mathcal B)$ the subgroup of $\mcg(\Sigma_{g,n}^b)$ consisting of elements that induce the trivial permutation of the marked points in $\mathcal B$. If $R$ is a component of the support of a pure subgroup $H$, then there is a well defined \emph{reduction map}
\[
H \rightarrow \mcg(R^\circ,\mathcal B)
\]
where $R^\circ$ is the surface obtained by collapsing each boundary component of $R$ to a marked point and $\mathcal B$ is the set of such points.

Recall that two pseudo-Anosov elements with equal support are independent if their corresponding foliations are distinct. Similarly two partial pseudo-Anosov elements are \emph{independent} if their images under the reduction map are independent pseudo-Anosov elements of $\mcg(R^\circ,\mathcal B)$, otherwise they are \emph{dependent}.

We can now state a key result that follows from many of the ideas given above and in \cite{BLM} and \cite[Lemma 5.10]{TeichModGroups}.

\begin{fact}\label{FACT}
Two elements of a pure subgroup commute if and only if
\begin{enumerate}
\item the supports of their components are pairwise disjoint or equal and
\item in the case where two partial pseudo-Anosov components have equal support $R$ the components are dependent.
\end{enumerate}
In particular if two pure elements commute then all of their nontrivial powers commute and if two pure elements do not commute then all of their nontrivial powers fail to commute.
\end{fact}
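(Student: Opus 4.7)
My plan is to prove both directions by reducing to the component-by-component analysis of pure mapping classes, then to invoke the relevant facts from \cite{BLM} and \cite{TeichModGroups} on each component.

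For the backward direction, assume conditions (1) and (2) hold for pure elements $f = f_1 \cdots f_r$ and $g = g_1 \cdots g_s$. I would pair up components by their (shared) supports. If $f_i$ and $g_j$ have disjoint supports then they commute because they can be realised by homeomorphisms with disjoint supports. If $f_i$ is a power of a Dehn twist and $g_j$ is also a power of a Dehn twist about the same curve, they commute trivially. If $f_i$ and $g_j$ are both partial pseudo-Anosov with equal support $R$ and are dependent, then by definition their images in $\mcg(R^\circ, \mathcal B)$ share invariant foliations; by the classical fact that the centraliser of a pseudo-Anosov in $\mcg(R^\circ, \mathcal B)$ is virtually cyclic (a result of McCarthy contained in \cite{BLM}), they commute. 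Since all pairs $f_i, g_j$ commute and the various components of $f$ (and of $g$) already commute amongst themselves by assumption, $f$ and $g$ commute.

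For the forward direction, suppose $f$ and $g$ are pure and $[f,g] = 1$. The canonical reduction system of a pure element is exactly the multicurve given by the boundaries of the components of its support, and both $f$ and $g$ preserve each other's canonical reduction systems (since conjugation by $g$ permutes the components of $f$, but purity forbids any permutation, so every component of $g$ sends every component of $f$'s support to itself). Cutting along the union of these canonical reduction systems and applying the reduction homomorphism to each piece, I reduce to the following two situations on each piece $R$: either one of $f, g$ is trivial on $R$ (disjoint supports), or both act nontrivially on the same subsurface $R$. In the latter case each of $f|_R$ and $g|_R$ is either a power of a Dehn twist about $\partial R$ or a pseudo-Anosov on $R^\circ$, and commutativity of the restrictions, together with the centraliser result of \cite{BLM} cited as \cite[Lemma 5.10]{TeichModGroups}, forces either matching Dehn twists about the same curve or dependent pseudo-Anosov components. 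This is exactly conditions (1) and (2).

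For the ``in particular'' clause, I would observe that nontrivial powers of a pure element are again pure with the same component supports (the support of $f_i^N$ equals the support of $f_i$, since $f_i^N$ is nontrivial for each $N \ne 0$: Dehn twist powers and pseudo-Anosov powers both have infinite order). Moreover, two partial pseudo-Anosov components $f_i$ and $g_j$ on a shared support $R$ are dependent if and only if their nontrivial powers are dependent, because dependence only depends on the projective classes of invariant foliations. Thus conditions (1) and (2) are preserved under taking nontrivial powers, and the equivalence of the first part yields the ``in particular'' statement.

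The main obstacle I anticipate is making the reduction along the canonical reduction systems fully rigorous, in particular justifying that $g$ preserves each component of the support of $f$ (not merely permutes them) and that the reduction homomorphism intertwines the relevant centraliser computations. This is where purity is essential, and where the precise formulation of \cite[Lemma 5.10]{TeichModGroups} does the heavy lifting; the rest is a mechanical component-by-component check.
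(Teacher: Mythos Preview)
The paper does not actually give a proof of this statement: it is presented as a \emph{Fact} and attributed to \cite{BLM} and \cite[Lemma~5.10]{TeichModGroups}, with no argument beyond that citation. Your proposal therefore goes further than the paper itself, sketching the component-by-component argument that underlies the cited results rather than merely invoking them.

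Your outline is essentially the standard one and is correct in spirit. One point worth tightening: in the backward direction you deduce that dependent partial pseudo-Anosov components commute from the fact that the centraliser of a pseudo-Anosov is virtually cyclic, but virtually cyclic alone does not give commutativity. What makes this work is that pure elements (in Ivanov's sense) are torsion-free, so the relevant stabiliser of the foliation pair is in fact infinite cyclic, and two elements of an infinite cyclic group commute. You also correctly flag the genuine technical step in the forward direction---that a commuting pure element $g$ fixes each component of the support of $f$, not merely the union---and you are right that this is exactly where purity and the machinery of \cite[Lemma~5.10]{TeichModGroups} are needed; the paper likewise defers this entirely to the references.
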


The next lemma uses this result to relate the centraliser of a non-abelian pure subgroup $H$ and the complement of one of the components of its support.

\begin{lem}[Brendle-Margalit]\label{AbelianCentral}
Let $H$ be a pure non-abelian subgroup of $\mcg(S_n)$. Then there is a component $R$ of the support of $H$ so that the reduction map $H \rightarrow \mcg(R^\circ,\mathcal B)$ has non-abelian image. For any such $R$, the centraliser of $H$ is supported in the complement of $R$.
\end{lem}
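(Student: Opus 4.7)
The plan is to extract $R$ directly from the non-commutativity of $H$ via Fact~\ref{FACT}, and then use the same fact in reverse to forbid any element of the centraliser from having support that meets $R$.

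For the first assertion, pick $f,g\in H$ with $[f,g]\neq 1$ and write their Nielsen--Thurston decompositions $f=f_1\cdots f_s$ and $g=g_1\cdots g_t$. By Fact~\ref{FACT}, some pair $(f_i,g_j)$ either has supports that cross (not disjoint and not equal) or has equal support on which $f_i$ and $g_j$ are independent partial pseudo-Anosovs. In either case $\mathrm{supp}(f_i)\cup\mathrm{supp}(g_j)$ lies in a single component $R$ of the support of $H$. The reduction homomorphism $\rho_R : H\to\mcg(R^\circ,\mathcal B)$ preserves this Nielsen--Thurston obstruction to commuting, so $\rho_R(f)$ and $\rho_R(g)$ do not commute, and $\rho_R(H)$ is non-abelian.

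For the centraliser claim, fix such an $R$ and take $c$ in the centraliser of $H$ in $\mcg(S_n)$. Replacing $c$ by a sufficiently high power (Ivanov's standard pure-element trick), we may assume $c$ is pure; since $c$ normalises $H$ it permutes the canonical components of the support of $H$, and a further power fixes $R$ setwise. Suppose for contradiction that some component $c_k$ of $c$ has support $T$ with $T\cap R\neq\emptyset$. I would then rule out each possible configuration of $T$ relative to $R$ using the disjoint-or-equal rule of Fact~\ref{FACT} applied to $c_k$ against the components of suitable $h\in H$. If $T$ crosses $\partial R$, or sits as a proper subsurface of $R$ (or properly contains part of $R$) while meeting a component of some $h\in H$ supported on $R$, the disjoint-or-equal condition fails immediately. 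The remaining case is $T=R$; since $\rho_R(H)$ is non-abelian, $R$ must be non-annular, so $c_k$ is a partial pseudo-Anosov of $R$. Fact~\ref{FACT} then forces every partial pseudo-Anosov component of every $h\in H$ supported on $R$ to be dependent with $c_k$, and rules out Dehn-twist components of $h$ about curves in the interior of $R$ altogether. Consequently $\rho_R(H)$ would consist of mutually dependent partial pseudo-Anosovs together with Dehn twists about boundary-parallel curves of $R$, which is abelian---contradicting the first part.

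The hard part, I expect, is the $T=R$ case: turning the ``mutually dependent pseudo-Anosov'' conclusion into a genuine proof that $\rho_R(H)$ is abelian. This should reduce to another pass of Fact~\ref{FACT} inside $\mcg(R^\circ,\mathcal B)$, combined with the standard fact that the centraliser of a pseudo-Anosov is virtually cyclic. The remaining bookkeeping---making precise the passage to powers of $c$ needed for the notion of support of the whole centraliser, and tracking boundary-parallel Dehn twists that lie outside $R^\circ$ but could still arise as components of $c$---is routine.
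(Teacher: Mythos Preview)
The paper does not actually prove this lemma: immediately after the statement it says ``The proof of Lemma~\ref{AbelianCentral} is given in \cite[Lemma 6.2]{TD},'' adding only a remark about the notational difference between $\pmcg(R^\circ)$ and $\mcg(R^\circ,\mathcal B)$ in the presence of marked points. So there is no in-paper proof to compare against; your argument is a reconstruction of the Brendle--Margalit proof, and the overall strategy---extract $R$ from a non-commuting pair via Fact~\ref{FACT}, then use the same fact contrapositively to exclude centralising elements from $R$---is the natural one and is essentially what one finds in \cite{TD}.

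Your argument is sound. One small point worth tightening: your passage to powers of $c$ (to make it pure and to fix $R$ setwise) establishes that a power of $c$ is supported off $R$, not $c$ itself, and the notion of ``support'' for a non-pure element has not been defined in the paper. In the paper's applications (Lemma~\ref{Complement} and Lemma~\ref{Action}) the centraliser is always taken inside a pure subgroup $G$, so $c$ is already pure and this step is unnecessary; you could simply note that the relevant centralisers are pure and drop the power trick. Also, in the $T=R$ case the boundary-parallel twists you mention become trivial under the reduction map (boundary components of $R$ collapse to marked points in $R^\circ$), so $\rho_R(H)$ literally consists of mutually dependent partial pseudo-Anosovs and is therefore abelian---no further bookkeeping is needed there.
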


The proof of Lemma \ref{AbelianCentral} is given in \cite[Lemma 6.2]{TD}. Note that as the only marked points discussed in this proof are the collapsed boundary components of subsurfaces, the groups $\pmcg(R^\circ)$ and $\mcg(R^\circ,\mathcal B)$ are equal. This is not the case for spheres with marked points.

In general a subgroup of a normal subgroup $N$ does not have connected support. As, however, we are aiming to define a graph of regions where vertices correspond to subgroups of $N$ this issue must now be addressed.

\begin{lem}[Brendle-Margalit]\label{Complement}
Let $N$ be a pure normal subgroup of $\mcg(S_n)$ or $\mcg(S_n,p)$ and let $G$ be a finite index subgroup of $N$.

Let $f$ be an element of $G$ and let $R$ be a region of $S_n$ so that some component of $f$ has support contained as a non-peripheral subsurface of $R$ and all other components have support that is either contained in or disjoint from $R$. Let $J$ be the subgroup of $G$ consisting of all elements supported in $R$. Then
\begin{enumerate}
\item the subgroup $J$ is not abelian;
\item the subgroup $J$ contains an element with support $R$; and
\item the centraliser $C_G(J)$ is supported in the complement of $R$.
\end{enumerate}
\end{lem}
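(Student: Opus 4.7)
The plan is to exploit the normality of $N$ in $\mcg(S_n)$ (or $\mcg(S_n,p)$) together with the finite index of $G$ in $N$ to produce many elements of $J$ by a commutator construction, and then to reduce part (3) to Lemma~\ref{AbelianCentral}. Using Fact~\ref{FACT}, decompose $f = f_{\mathrm{in}} \cdot f_{\mathrm{out}}$, where $f_{\mathrm{in}}$ is the product of the components of $f$ supported in $R$ and $f_{\mathrm{out}}$ is the product of the components disjoint from $R$; by the hypothesis on $f$ this exhausts all components, and $f_{\mathrm{in}}$ is non-trivial because it contains the component whose support is the non-peripheral subsurface $R_0 \subset R$. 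Components of a pure mapping class commute, so $f^M = f_{\mathrm{in}}^M f_{\mathrm{out}}^M$ for every integer $M$.

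For any $\psi \in \mcg(S_n)$ supported in $R$, $\psi$ commutes with $f_{\mathrm{out}}$, whence
\[
f^{-M}\,\psi f^{M}\psi^{-1} \;=\; f_{\mathrm{in}}^{-M}\,\psi f_{\mathrm{in}}^{M}\psi^{-1},
\]
which is supported in $R$. Because $N$ is normal, $\psi f \psi^{-1} \in N$, and since $[N:G] < \infty$ we may choose $M = M(\psi) \ge 1$ so that $f^M$ and $\psi f^M \psi^{-1}$ both lie in $G$; the displayed element then lies in $J$. This gives a family of elements of $J$ parametrised by $\psi$. For part (1), selecting two such $\psi$ that translate $R_0$ to intersecting but non-isotopic positions produces, via Fact~\ref{FACT}, two elements of $J$ whose partial pseudo-Anosov components have neither disjoint nor equal supports, so they fail to commute. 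For part (2), I would choose $\psi_1, \dots, \psi_j \in \mcg(S_n)$ supported in $R$ so that $\bigcup_i \psi_i(R_0)$ fills $R$ up to isotopy (possible because $R_0$ is non-peripheral in $R$), and assemble the resulting elements of $J$ into a product whose support is exactly $R$. Part (3) is then immediate from Lemma~\ref{AbelianCentral}: by (1) and (2) the group $J$ is pure, non-abelian, and has connected support $R$, and the non-commuting pair from (1) survives collapsing $\partial R$, so the reduction $J \to \mcg(R^\circ, \mathcal{B})$ has non-abelian image; hence $C_G(J)$ is supported in the complement of $R$.

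The main obstacle is part (2). The commutator construction naturally produces elements supported in $R_0 \cup \psi(R_0)$ together with the supports of the other components of $f_{\mathrm{in}}$, and one must track carefully, using Fact~\ref{FACT}, that the pure components assembled from such a product do not cancel or collapse the total support to a proper subsurface of $R$. In the $\mcg(S_n,p)$ case one further requires each $\psi_i$ to fix the marked point $p$, which is automatic when $p \notin R$ and otherwise follows from the richness of $\mcg(R,p)$ in moving non-peripheral subsurfaces.
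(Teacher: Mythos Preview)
Your approach is essentially the one the paper invokes: the paper does not give a self-contained proof of this lemma but simply names the \emph{commutator trick} and refers to \cite[Section 6.2]{TD}, and what you have written is precisely that trick---splitting $f=f_{\mathrm{in}}f_{\mathrm{out}}$, conjugating by $\psi$ supported in $R$, and using normality of $N$ together with $[N:G]<\infty$ to land the resulting commutators in $J$. Your derivation of part~(3) from Lemma~\ref{AbelianCentral} once (1) and (2) are in hand is exactly how the reduction is meant to go.

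The one place where your sketch is genuinely thinner than the Brendle--Margalit argument is part~(2), as you yourself flag. Writing down a single product of commutators whose support is \emph{exactly} $R$ is delicate, because the canonical (Nielsen--Thurston) decomposition of a product of pure elements with overlapping supports is not the naive one, and cancellation of components is a real possibility. The cleaner route, which is closer to what \cite{TD} actually does, is inductive on complexity: once you have \emph{some} nontrivial $h\in J$ with support $R'\subsetneq R$, choose $\psi$ supported in $R$ with $\psi(R')$ meeting $R'$ essentially and with $R'\cup\psi(R')$ filling a strictly larger subsurface of $R$; then a suitable power of $h\cdot(\psi h\psi^{-1})$ (or their commutator) lies in $J$ and has support strictly containing $R'$, and the process terminates with support $R$. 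This avoids having to control the canonical form of a long product all at once.
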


The proof of this lemma depends upon the fact that $N$ is pure. If $f$ is an element of $N$ with a component whose support is a non-annular region $R$ then the \emph{commutator trick} allows us to find an element of $N$ whose support is $R$. The proof is detailed in \cite[Section 6.2]{TD} along with a deeper discussion of the \emph{commutator trick}.

\subsection{Basic Subgroups}

Recall from Section \ref{INTRO} that for a region $R$ we call the disc $D_R$ an enveloping disc of $R$ if $R \subset D$ and for all discs $D$ such that $R \subset D$ we have $n(D_R) \le n(D)$. We say a mapping class is small in $\mcg(S_n)$ if its support is a region $R$ such that $3 n(D_R) -1 \le n$. Similarly, we say a mapping class is small in $\mcg(S_n,p)$ if it is small in $\mcg(S_n)$ and it has support $R$ such that $p \notin R$.

We turn our attention to the \emph{basic subgroups} of a pure subgroup $N$ that is normal in $\mcg(S_n)$ or $\mcg(S_n,p)$. Brendle-Margalit originally defined subgroups of this type for normal subgroups of $\mcg(\Sigma_{g,0})$ \cite[Section 6]{TD}. The supports of these subgroups will correspond to the vertices of the graph of regions we construct in the next section. It follows then that we need to show that the supports are regions of $S_n$, the group $\Mod (S_n)$ acts on the set of supports, and that there is a small \emph{basic subgroup} of $N$. For technical reasons we define a \emph{basic subgroup} of a finite index subgroup $G$ of $N$.

\subsection*{Basic subgroups.} There exists a strict partial order on subgroups of $G$ as follows:
\begin{center}$H \prec H'$ \ \ \ if \ \ \  $C_G(H') \subsetneq C_G(H)$.\end{center}
We say a subgroup of $G$ is a \emph{basic subgroup} if among all non-abelian subgroups of $G$ it is minimal with respect to the partial order.

The next lemma tells us that the supports of basic subgroups are indeed suitable candidates from which we can build a graph of regions.

\begin{lem}[Brendle-Margalit]\label{Action}
Let $N$ be a pure normal subgroup of $\mcg(S_n)$ or $\mcg(S_n,p)$ that contains a small element and let $G$ be a finite index subgroup of $N$.

\begin{enumerate}
\item The support of a basic subgroup of $G$ is a non-annular region of $S_n$.
\item If $B$ is a basic subgroup of $N$ then $B \cap G$ is a basic subgroup of $G$; similarly, any basic subgroup of $G$ is a basic subgroup of $N$.
\item $N$ contains a small basic subgroup.
\end{enumerate}
\end{lem}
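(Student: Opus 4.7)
The plan is to verify the three assertions in turn, using Lemma \ref{AbelianCentral} and Lemma \ref{Complement} as the principal tools. For (1), first note that a pure element supported on an annulus is a power of the Dehn twist about the core curve, so any subgroup of $G$ with annular support is cyclic; since basic subgroups are non-abelian by definition, annular support is impossible. To establish connectedness, suppose for contradiction that a basic subgroup $B \le G$ has support $R_1 \sqcup R_2$ with both pieces non-trivial. Lemma \ref{AbelianCentral} supplies a component, say $R_1$, on which the reduction of $B$ is non-abelian. The commutator trick, applied to a suitable element of $B$, produces an element with a non-peripheral component supported in $R_1$, and Lemma \ref{Complement} then yields a non-abelian subgroup $J \le G$ supported in $R_1$ with $C_G(J)$ supported in the complement of $R_1$. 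Since the complement of $R_1$ strictly contains the complement of $R_1 \cup R_2$, a second application of Lemma \ref{Complement} with the region $R_2$ produces elements of $G$ supported in $R_2$ that fail to commute with an appropriate element of $B$; such elements lie in $C_G(J) \setminus C_G(B)$. This forces $J \prec B$, contradicting minimality.

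For (2), since $[N:G]<\infty$, every element of $B$ has a positive power in $B \cap G$, and Fact \ref{FACT} shows that two pure elements commute precisely when all their non-trivial powers do. Hence $B$ and $B \cap G$ have identical commuting relations, the same support, and after the Fact \ref{FACT} bookkeeping, aligned centralizers both in $N$ and in $G$; the partial orders on non-abelian subgroups of $G$ and of $N$ therefore correspond under the operations $B \mapsto B \cap G$ and the inclusion $B' \hookrightarrow N$, so basic subgroups match. For (3), pick a small element $f \in N$ and pass to a power so that $f \in G$; by assumption its support lies in a disc $D_R$ with $3n(D_R)-1 \le n$. By purity, $f$ has a component whose support is a non-peripheral subsurface of some region $R' \subseteq D_R$, and Lemma \ref{Complement} provides a non-abelian subgroup of $G$ supported in $R'$. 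Among the non-abelian subgroups of $G$ with support contained in $D_R$ there are only finitely many topological types of possible supports, so one can select such a subgroup with maximal centralizer; part (1) then forces this subgroup to be basic in $G$, and its support sitting inside $D_R$ makes it small.

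The main obstacle is verifying the strict containment $C_G(B) \subsetneq C_G(J)$ in the proof of (1): one needs to manufacture an explicit element of $G$ supported in $R_2$ that lies outside $C_G(B)$. This depends on a coordinated application of Lemma \ref{Complement} to the region $R_2$ (to guarantee there are any such elements at all) with Fact \ref{FACT} (to promote a non-commuting pair on $R_2$ in the image of the reduction map into a non-commuting pair in $G$ itself, using the commutator trick to realise components as genuine elements of $G$). A secondary technical point, relevant to both (2) and (3), is the delicate bookkeeping required when passing centralisers between $N$ and the finite index subgroup $G$; here Fact \ref{FACT} is the indispensable tool, since it ensures that centralisers detect commutation robustly under taking powers.
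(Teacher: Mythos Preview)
Your overall strategy matches the paper's closely, but there are two genuine gaps.

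In part (1) you show the support is non-annular and connected, but you never rule out the possibility that the support is all of $S_n$. A region by definition has nonempty boundary, and a pure subgroup can certainly have full support (take any pseudo-Anosov). The paper handles this case separately: if the support of $H$ is all of $S_n$ then $C_G(H)$ is trivial by Lemma~\ref{AbelianCentral}, but since $G$ contains a small element one can use Lemma~\ref{Complement} to produce a non-abelian subgroup $J$ supported in a small disc whose centraliser is nontrivial, contradicting the minimality of $H$. Without this step your argument for (1) is incomplete.

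In part (3) the sentence ``finitely many topological types of possible supports, so one can select such a subgroup with maximal centraliser; part (1) then forces this subgroup to be basic'' does not work as written. Finitely many homeomorphism types does not bound the number of isotopy classes of supports inside $D_R$, so it does not by itself guarantee a $\prec$-minimal element; and invoking part (1), which is a statement about basic subgroups, to conclude that your candidate is basic is circular. The paper instead runs an explicit descent: starting from $J_{R_1}=\{h\in G:\operatorname{supp}(h)\subseteq R_1\}$, if $J_{R_1}$ is not basic one finds $J'\prec J_{R_1}$, applies Lemma~\ref{AbelianCentral} to extract a component $R_2$ of the support of $J'$ that is non-peripheral in $R_1$, and passes to $J_{R_2}$. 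This produces a strictly nested chain $R_1\supsetneq R_2\supsetneq\cdots$ of regions, which terminates because the complexity drops at each step; the terminal $J_{R_m}$ is then basic and has support in $D_R$. Your sketch has the right ingredients but needs this descending-chain argument to actually terminate.
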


\begin{proof}
To prove the first statement we let $H$ be a basic subgroup of $G$. The support of $H$ is not empty and is not an annulus as $H$ is not abelian by the definition of basic subgroup. Suppose the support of $H$ is the entire sphere $S_n$. From Lemma \ref{AbelianCentral} it follows that $C_G(H)$ is trivial. Now, since $G$ is finite index it contains a small element $f$. From Lemma \ref{Complement} we can find a disc $D$ with $n \ge 3n(D) -1$ and a non-abelian subgroup $J$ containing an element $j$ with support $D$ and with centraliser $C_G(J)$ containing an element of the form $hjh^{-1}$. Since $J$ is not trivial this contradicts the fact that $H$ is basic, thus the support of $H$ is not the entire sphere $S_n$.

In order to finish the proof of the first statement we need to show that the support of $H$ is connected, hence a region. Assume then that the support of $H$ is not connected. By Lemma \ref{AbelianCentral} there exists a component $R$ of the support of $H$ so that the image of the reduction map $H \rightarrow \mcg(R^\circ,\mathcal B)$ is not abelian. There must then be an element of $H \subset G$ with a component whose support is a non-peripheral subsurface of $R$. This satisfies the conditions of Lemma \ref{Complement} so the subgroup $J \subset G$ consisting of all elements supported in $R$ is not abelian. By Lemma \ref{AbelianCentral} the centraliser $C_G(H)$ is supported in the complement of $R$, hence $C_G(H) \subseteq C_G(J)$. In order to contradict our assumption that the support of $H$ is not connected we will show that there is an element of $C_G(J) \setminus C_G(H)$.

Let $h$ be an element of $H$ whose support is not contained in $R$. There exists a non-annular region $Q$ disjoint from $R$ that contains a component of the support of $h$ as a non-peripheral subsurface. Once again, this satisfies Lemma \ref{Complement} so there exists an element of $C_G(J)$ not in $C_G(H)$. This contradicts the minimality of the basic subgroup $H$, so the support of $H$ must be connected. This completes the proof of the first statement.

The proof of the second statement is given in \cite[Lemma 6.4]{TD}.

Now we will prove the third statement. We have that $G$ contains a non-trivial pure element $f$ with a small component. Let the support of this small component be a subsurface of the disc $D$ where $n \ge 3n(D)$. We can find a region $R_1$ contained in $D$ satisfying the conditions of Lemma \ref{Complement}. Let
\begin{center}$J_{R_1} = \{h \in G \ |\ $ the support of $h$ is contained in $R_1 \}$.\end{center}
Now, from Lemma \ref{Complement}, $J_{R_1}$ is not abelian and $C_G(J_{R_1})$ is precisely the elements of $G$ with support disjoint from $R_1$. If we can show that $J_{R_1}$ contains a basic subgroup then the result follows from the second statement of this lemma. We assume that $J_{R_1}$ is not itself minimal, so it contains a non-abelian subgroup $J'_{R_1} \prec J_{R_1}$. By Lemma \ref{AbelianCentral} the support of $J'_{R_1}$ has a component that is a subsurface $R_2$ which is non-peripheral in $R_1$. We define $J_{R_2}$ in the same way as $J_{R_1}$ and we see that $J_{R_2} \prec J_{R_1}$. Repeating this process algorithmically we will arrive at a basic subgroup $H$ with support in $D$, hence $H$ is small.
\end{proof}

We note that if $N$ is normal in $\mcg(S_n)$ then $\Mod(S_n)$ acts on the set of supports of basic subgroups of $N$. If $N$ is normal in $\mcg(S_n,p)$ but not normal in $\mcg(S_n)$ or $\Mod(S_n)$ then $\Mod(S_n)$ may not act on the set of supports of basic subgroups.

\subsection{Action on a Graph of Regions}
Let $N$ be a fixed pure normal subgroup of $\mcg(S_n)$ or $\mcg(S_n,p)$. We will now define a graph of regions $\vG_{N}(S_n)$ such that $\Comm N$ has a natural action on $\vG_{N}(S_n)$. We first define $\mathcal G^\sharp_N(S_n)$ to be the graph of regions whose vertices correspond to be the $\Mod(S_n)$-orbit of the supports of the basic subgroups of $N$. From Lemma \ref{Action}(1) we have that the graph of regions $\mathcal G^\sharp_N(S_n)$ has no cork pairs. From Lemma \ref{Action}(3) we have that $\mathcal G^\sharp_N(S_n)$ contains a small vertex. This graph may be disconnected and may contain holes.

Recall that if $v$ is a hole corresponding to the region $R$ then the filling of $v$ is the union of $R$ and the complementary discs with no representatives of subregions in $A$. If a graph of regions $\vG_{A}(S_n)$ has holes then we define its \emph{filling} to be the graph defined by replacing holes with vertices corresponding to their fillings.

We define $\mathcal G^\flat_N(S_n)$ to be the filling of $\mathcal G^\sharp_N(S_n)$. By \cite[Lemma 2.4]{TD} the graph $\mathcal G^\flat_N(S_n)$ has no holes, no corks, and contains a small vertex. By \cite[Lemma 6.5]{TD} the small vertices of $\mathcal G^\flat_N(S_n)$ lie in the same connected component of the graph. We define this connected component to be the graph of regions $\vG_{N}(S_n)$. It is easy to check that since $\mathcal G^\flat_N(S_n)$ has no holes and no corks the graph $\vG_{N}(S_n)$ has no holes and no corks. We have therefore proven the following fact.

\begin{prop}\label{PRP1}
Let $N$ be a pure normal subgroup of $\mcg(S_n)$ or $\mcg(S_n,p)$ that contains a small element. Then the natural map
\[
\Mod(S_n) \rightarrow \Aut \vG_{N}(S_n)
\]
is an isomorphism.
\end{prop}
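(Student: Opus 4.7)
The plan is to verify that $\vG_N(S_n)$ satisfies the hypotheses of Theorem \ref{BigDaddy} and then apply that theorem directly. The construction of $\vG_N(S_n)$ in the preceding discussion was set up precisely so that this reduction goes through, and the proposition is essentially a packaging of Lemma \ref{Action}, the results cited from \cite{TD}, and Theorem \ref{BigDaddy}.

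The three properties I need to check are that $\vG_N(S_n)$ contains a small vertex, has no holes, and has no corks. Connectedness is automatic, since $\vG_N(S_n)$ was defined as the connected component of $\mathcal G^\flat_N(S_n)$ containing its small vertices. For the small vertex itself, Lemma \ref{Action}(3) produces a small basic subgroup of $N$ whose support $R$ satisfies $3 n(D_R) - 1 \le n$; the corresponding vertex persists through the filling that yields $\mathcal G^\flat_N(S_n)$ and lies in $\vG_N(S_n)$ by the Brendle--Margalit result cited from \cite{TD}. The absence of holes is built into the definition of filling. For the absence of corks, Lemma \ref{Action}(1) says that every vertex of $\mathcal G^\sharp_N(S_n)$ corresponds to a non-annular region; filling a hole only enlarges its representing region by adjoining complementary discs, so no annular vertex can be introduced, and hence $\vG_N(S_n)$ admits no cork pair.

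With these hypotheses in hand, Theorem \ref{BigDaddy} delivers an isomorphism $\eta_A : \Mod(S_n) \to \Aut \vG_N(S_n)$, and it remains only to identify $\eta_A$ with the natural map of the proposition. When $N$ is normal in $\mcg(S_n)$, conjugation by any $f \in \Mod(S_n)$ carries basic subgroups of $N$ to basic subgroups of $N$, and the induced permutation of their supports coincides with $\eta_A$ by definition. When $N$ is normal only in $\mcg(S_n,p)$, the set of basic subgroup supports may fail to be $\Mod(S_n)$-invariant, but the vertex set of $\mathcal G^\sharp_N(S_n)$ was constructed from the full $\Mod(S_n)$-orbits of those supports, so the $\Mod(S_n)$-action on the ambient graph is tautological and restricts to the connected component $\vG_N(S_n)$ (the small vertices being permuted among themselves). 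The main obstacle is therefore not in this proposition but in the earlier work it draws upon: verifying that basic subgroup supports are connected and non-annular in Lemma \ref{Action}, and resolving the Ivanov metaconjecture for graphs of regions in Theorem \ref{BigDaddy}.
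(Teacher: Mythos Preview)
Your proposal is correct and follows essentially the same route as the paper: the proposition is stated immediately after the construction of $\vG_N(S_n)$, and the paper's ``proof'' is precisely the preceding paragraph, which observes that $\mathcal G^\sharp_N(S_n)$ has no corks by Lemma \ref{Action}(1), that filling to $\mathcal G^\flat_N(S_n)$ removes holes and preserves smallness (citing \cite[Lemma 2.4]{TD}), and that the connected component $\vG_N(S_n)$ inherits these properties, whereupon Theorem \ref{BigDaddy} applies. Your discussion of the identification of the natural map in the $\mcg(S_n,p)$ case goes slightly beyond what the paper spells out here, but is consistent with the construction.
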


We will now define the action of $\Comm N$ on the graph of regions $\vG_{N}(S_n)$. To that end, for a basic subgroup $B$ we define $v_B$ to be the vertex corresponding to the support of $B$.

\begin{prop}\label{PRP2}
Let $N$ be a pure normal subgroup of $\mcg (S_n)$ or $\mcg(S_n,p)$ that contains a small element. There is a well defined homomorphism
\[
\Comm N \rightarrow \Aut \vG_{N}(S_n).
\]
\end{prop}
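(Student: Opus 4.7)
The plan is to associate to each class $[\phi] \in \Comm N$, represented by an isomorphism $\phi\colon G_1 \to G_2$ between finite index subgroups of $N$, an automorphism $\phi_*$ of $\vG_N(S_n)$. The first step is to observe that the notion of basic subgroup depends only on the partial order $\prec$ defined via reverse inclusion of centralisers, so $\phi$ carries basic subgroups of $G_1$ to basic subgroups of $G_2$. Combined with Lemma \ref{Action}(2), which provides a bijection (via intersection) between basic subgroups of $N$ and basic subgroups of any finite index subgroup, this yields an induced bijection $\phi^\natural$ on the set of basic subgroups of $N$: send $B \subset N$ to the unique basic subgroup of $N$ whose intersection with $G_2$ equals $\phi(B \cap G_1)$.

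The next step is to push this bijection forward to the vertex set of $\vG_N(S_n)$. Since vertices of $\mathcal{G}^\sharp_N(S_n)$ correspond to homotopy classes of supports of basic subgroups, and distinct basic subgroups may share a support, the following descent property must be verified: if $B_1, B_2$ are basic subgroups of $N$ with homotopic supports, then $\phi^\natural(B_1)$ and $\phi^\natural(B_2)$ also have homotopic supports. The strategy is to find a purely algebraic characterisation of ``sharing a support'', phrased in terms of centraliser data or of membership in a common minimal non-abelian subgroup of $N$ whose centraliser in $G$ detects the complement of the shared region; any such characterisation is automatically preserved by $\phi$. Once descent is established, adjacency is preserved because by Fact \ref{FACT} disjointness of supports is equivalent to commutation of basic subgroups, and commutation is preserved by $\phi$. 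This gives a well-defined vertex permutation of $\mathcal{G}^\sharp_N(S_n)$ which extends to $\vG_N(S_n)$ through the filling and connected-component construction introduced earlier in this section.

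It remains to verify that the assignment $\phi \mapsto \phi_*$ descends to $\Comm N$ and is a homomorphism. If $\phi, \phi'\colon G_1 \to G_2$ agree on some finite index subgroup $G_3 \subseteq G_1$, then for any basic subgroup $B$ of $N$, the subgroup $B \cap G_3$ is basic in $G_3$ by Lemma \ref{Action}(2), and $\phi(B \cap G_3) = \phi'(B \cap G_3)$; the unique basic subgroups of $N$ produced by $\phi$ and $\phi'$ therefore agree, so $\phi_* = \phi'_*$. Functoriality of the intersection construction under composition of representatives gives the homomorphism property.

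The main obstacle is the descent step of the second paragraph: identifying an algebraic characterisation of the ``same support'' equivalence relation on basic subgroups that is both preserved by arbitrary group isomorphisms and sharp enough to detect the homotopy class of the supporting region. This is the analogue for spheres of the corresponding step in \cite[Section 6]{TD}, and the proof is expected to follow the same pattern, using Lemma \ref{Complement} to translate between subgroups supported in a fixed region and algebraic data about centralisers in $N$.
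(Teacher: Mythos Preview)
Your outline follows the same strategy as the paper (which in turn defers the hard steps to \cite[Section~6]{TD}): set $\alpha_*(v_B)=v_{\alpha(B\cap G_1)}$, use Lemma~\ref{Action}(2) to see that $\alpha(B\cap G_1)$ is again basic in $N$, isolate the descent problem (same support $\Rightarrow$ same image support) as the crux, and resolve it by an algebraic characterisation of the support via centraliser data. Two corrections are in order.

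First, a minor one: Lemma~\ref{Action}(2) does not assert a \emph{bijection} between basic subgroups of $N$ and of a finite index $G$ via intersection. It says only that $B\cap G$ is basic in $G$ when $B$ is basic in $N$, and that every basic subgroup of $G$ is already basic in $N$. Fortunately you do not need a bijection: $\alpha(B\cap G_1)$ is basic in $G_2$, hence basic in $N$, so you may take its support directly and write $\alpha_*(v_B)=v_{\alpha(B\cap G_1)}$ without ever lifting back to a ``unique'' basic subgroup of $N$.

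Second, and more seriously, there is a genuine gap in the case where $N$ is normal in $\mcg(S_n,p)$ but not in $\mcg(S_n)$. By construction, vertices of $\vG^\sharp_N(S_n)$ correspond to the full $\Mod(S_n)$-orbit of supports of basic subgroups. When $N$ is only $\mcg(S_n,p)$-normal, that orbit can contain regions $R$ (specifically those with $p\in D_R$) that are \emph{not} the support of any basic subgroup of $N$. Your map $\alpha_*$ is so far defined only on vertices of the form $v_B$, and the filling and connected-component passages do not manufacture these extra vertices, so your proposed extension does not reach them. The paper closes this gap in two stages: it first extends $\alpha_*$ to the \emph{admissible} vertices (those corresponding to a region $R$ which is either a basic-subgroup support or satisfies $p\in R$, the point being that in the latter case every neighbour already has the form $v_B$), using that distinct vertices of $\vG_N(S_n)$ have distinct links; it then extends to the remaining vertices by the same link argument, after checking that each non-admissible vertex has an admissible neighbour on its $p$-side. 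You should insert this argument between your construction of $\alpha_*$ on the $v_B$'s and the passage to $\vG_N(S_n)$.
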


\begin{proof}
We define the homomorphism as follows: if $\alpha : G_1 \rightarrow G_2$ is an isomorphism between finite index subgroups of $N$ and $\alpha_\star$ is the image in $\Aut \vG_{N}(S_n)$ of the equivalence class of $\alpha$, then for any basic subgroup $B$ of $N$ we have
\[
\alpha_\star(v_B)=v_{\alpha(B \cap G_1)}.
\]
The fact that this makes sense is a consequence of Lemma \ref{Action}(2). That is, $B \cap G_1$ is a basic subgroup of $G_1$ and since $\alpha$ is an isomorphism $\alpha (B \cap G_1)$ is a basic subgroup of $G_2$, and is therefore a basic subgroup of $N$. A consequence of \cite[Lemma 6.8]{TD} is that if $N$ is a normal subgroup of $\mcg(S_n)$ then $\Mod(S_n)$ acts on the set of supports of basic subgroups of $N$. In other words the set of these supports are a subset of $\vR(S_n)$.

If $N$ is not a normal subgroup of $\mcg(S_n)$ or $\Mod(S_n)$ then there may exist vertices of $\vG_N (S_n)$ that do not correspond to the supports of basic subgroups.  If $v$ is a vertex of this type and corresponds to a region $R$ we note that $p \in D_R$.  If $p \in R$ then every vertex in the link of $v$ must correspond to the support of a basic subgroup of $N$.  Suppose now that $p \notin R$.  Since $\vG_N (S_n)$ has no holes and no corks it must be that the complementary disc of $R$ that contains $p$ also contains a region $Q$ such that $p \in Q$.  We refer to a vertex $v$ as admissible if it corresponds to a region $R$ such that; either $R$ is the support of a basic subgroup, or $p \in R$.

Now, each vertex in $\vG_N (S_n)$ is completely determined by its link and so we may extend $\alpha_\star$ to all admissible vertices.  Moreover, every vertex that is not admissible is uniquely determined by the subset of admissible vertices in its link.  As above we may now extend $\alpha_\star$ to all vertices of $\vG_N (S_n)$.

To show that the homomorphism is well defined we must show that if $B$ is a finite index subgroup of $N$ and $\alpha : G_1 \rightarrow G_2$ and $\alpha' : G_1' \rightarrow G_2'$ are representatives of the same element of $\Comm N$ then
\[
v_\alpha(B \cap G_1) = v_{\alpha'}(B \cap G_1').
\]
This is true due to \cite[Proposition 6.8]{TD}. Furthermore, this result tells us that this permutation of vertices of $\vG_N(S_n)$ extends to an automorphism of the subgraph of vertices corresponding to supports of basic subgroups. By definition of $\alpha_\star(v)$ for any other vertex $v$ we have that $\alpha_\star$ extends to a automorphism of $\vG_N(S_n)$.
\end{proof}

\subsection{Proof of Theorem \ref{Proppy}}

For any mapping class $f \in \Mod(S_n)$ we denote by $\alpha_f$ the automorphism of $\Mod(S_n)$ given by conjugation by $f$. If $f$ belongs to the normaliser of $N$ then we may consider $\alpha_f$ as an element of $\Aut N$. If there is a restriction of $\alpha_f$ that is an isomorphism between finite index subgroups of $N$ then we can think of the equivalence class $[\alpha_f]$ as an element of $\Comm N$. For $f \in \Mod(S_n)$ let $f_\star$ be its image in $\Aut \vG_N(S_n)$ by the isomorphism in Propoosition \ref{PRP1}. Throughout this section we will use the fact that $f_\star (v_B) = v_{f B f^{-1}}$. A proof of this fact can be found in \cite[Lemma 6.7(4)]{TD}.

\begin{proof}[Proof of Theorem \ref{Proppy}]
Let $N$ be a normal subgroup of $\mcg(S_n)$ or $\mcg(S_n,p)$. Let $\vM_1$ be the normaliser of $N$ in $\Mod(S_n)$. One can see that $\vM_1$ is equal to one of $\mcg(S_n,p)$, $\Mod(S_n,p)$, $\mcg(S_n)$ or $\Mod(S_n)$. Let $P$ be a pure normal subgroup of finite index in $\vM_1$. A subgroup of this type always exists, as shown by Ivanov in \cite{TeichModGroups}. We can define the following sequence of homomorphisms:
\[
\vM_1 \xrightarrow[]{\Phi_1} \Aut N \xrightarrow[]{\Phi_2} \Comm N \xrightarrow[]{\Phi_3} \Comm N \cap P \xrightarrow[]{\Phi_4} \Aut \vG_{N \cap P} (S_n) \xrightarrow[]{\Phi_5} \Mod(S_n).
\]
Where;
\begin{itemize}
\item The map $\Phi_1$ is defined so that $\Phi_1(f) = \alpha_f$,
\item the map $\Phi_2$ sends an element of $\Aut N$ to its equivalence class in $\Comm N$,
\item the map $\Phi_3$ sends an element of $\Comm N$ to the equivalence class of any restriction that is an isomorphism between finite index subgroups of $N \cap P$,
\item the map $\Phi_4$ is defined in Proposition \ref{PRP2}, and
\item the map $\Phi_5$ is defined in Proposition \ref{PRP1}.
\end{itemize}
\noindent We claim that each $\Phi_i$ is injective and that
\[
\Phi_5 \circ \Phi_4 \circ \Phi_3 \circ \Phi_2 \circ \Phi_1 = \Id_{\vM_1}.
\]
We show that $\Phi_1$ is injective by using similar methods to that of Lemma \ref{Injectivity}. If $f \in \vM_1$ commutes with an element $h \in N$ then the reduction system of $h$ is fixed by $f$. There exists an element of $N$ with nonempty reduction system. Elements of the $\vM_1$-orbit of this reduction system fills the large associated disc of an arbitrary $2$-curve. Similar to Lemma \ref{Injectivity} the injectivity of $\Phi_1$ follows.

Now let $\alpha \in \Aut N$ be an element of the kernel of $\Phi_2$ and let $h \in N$ be pseudo-Anosov. Since $\Phi_2(\alpha)$ is the identity the restriction of $\alpha$ to a finite index subgroup $G$ of $N$ is also the identity. There is some $m>0$ so that both $h^m$ and $(fhf^{-1})^m$ lie in $G$ and so 
\[
fh^mf^{-1} = (fhf^{-1})^m = \alpha((fhf^{-1})^m) = \alpha(f)\alpha(h^m)\alpha(f)^{-1} = \alpha(f) h^m \alpha(f)^{-1}.
\]
It follows that $f^{-1}\alpha(f)$ commutes with $h^m$ and so fixes the unstable foliation of $h$. Since $h$ was arbitrary and $N$ is normal in $\vM_1$ it follows that $f^{-1}\alpha(f)$ fixes all $2$-curves. Again, this implies that $f^{-1}\alpha(f)$ is the identity and so $\alpha(f)=f$ for all $f \in N$.

Now, the map $\Phi_3$ is an isomorphism as a finite index subgroup of $N \cap P$ is a finite index subgroup of $N$.

To show that $\Phi_4$ is injective we first fix an isomorphism $\alpha : G_1 \rightarrow G_2$ that represents an element $[\alpha]$ of $\Comm N \cap P$. We denote the image of $[\alpha]$ in $\Aut \vG_{N \cap P}$ by $\alpha_\star$ as in Proposition \ref{PRP2}. We assume that $\alpha_\star$ is the identity. We will show that $G_1=G_2$, hence $[\alpha]$ is the identity. Let $h \in G_1$. Since the natural map $\Mod(S_n) \rightarrow \Aut \vG_{N \cap P}(S_n)$ is an isomorphism it suffices to show that the images of $\alpha(h)$ and $h$ are equal. We denote these images $\alpha(h)_\star$ and $h_\star$ respectively. 

Let $B$ be a basic subgroup of $N$. By Lemma \ref{Action} (2) we can assume that $B \subset G_1$. It follows that
\[
h_\star(v_B) = v_{hBh^{-1}} = \alpha_\star v_{hBh^{-1}} = v_{\alpha(hBh^{-1})} = v_{\alpha(h)\alpha(B)\alpha(h)^{-1}}
\]
\[
= \alpha(h)_\star v_{\alpha(B)} = \alpha(h)_\star \alpha_\star v_B = \alpha(h)_\star v_B.
\]
Since every element of $\Aut \vG_N(S_n)$ is uniquely determined by the image of vertices of the form $v_B$ we have that $h_\star = \alpha(h)_\star$ as desired.

As mentioned previously the map $\Phi_5$ is an isomorphism due to Proposition \ref{PRP1}. By construction the composition is the identity on $\vM_1$ and the claim follows.  If image of $\Phi_5 \circ \Phi_4 \circ \Phi_3$ is equal to $\vM_1$ then the result follows from the fact that each homomorphism is injective. That is, 
\[
\Aut N \cong \Comm N \cong \vM_1.
\]
We now consider the cases where $\Comm N$ is not isomorphic to $\vM_1$. Let $\vM_2$ be the image of $\Phi_5 \circ \Phi_4 \circ \Phi_3$, hence $\vM_1 \subset \vM_2$. Recall that $\Phi_5^{-1} = \eta_{N \cap P}$. We can now construct the following commutative diagram.
\[
\begin{tikzcd}
 & \vM_2 \arrow{ddd}{\Phi_6} \arrow[hookleftarrow]{dr} &  \\
 \eta_{N \cap P}(\vM_2) \arrow{ur}{\Phi_5}[swap]{\cong} &  & \vM_1 \arrow[hookrightarrow]{d}{\Phi_1} \\
 \Comm N \cap P \arrow[hookrightarrow]{u}{\Phi_4}&  & \Aut N \\
 & \Comm N \arrow{ul}{\Phi_3}[swap]{\cong} \arrow[hookleftarrow]{ur}[swap]{\Phi_2} & 
\end{tikzcd}
\]
where $\Phi_6$ is the natural homomorphism. We will show that $\Phi_6$ is the left inverse of $\Phi_5 \circ \Phi_4 \circ \Phi_3$, that is,
\[
\Phi_6 \circ \Phi_5 \circ \Phi_4 \circ \Phi_3 ([\alpha]) = [\alpha].
\]
We fix an isomorphsim $\alpha : H_1 \to H_2$ between finite index subgroups that represents an element $[\alpha] \in \Comm N \cap P \cong \Comm N$. As usual we denote by $\alpha_\star$ the image of $[\alpha]$ in $\eta_{N\cap P}(\vM_2)$. Assume that $\Phi_5 ( \alpha_\star ) = f \in \vM_2$. We need to show that $[\alpha]$ is equal to the restriction of $\alpha_f$, the conjugation map defined by $f$.

To that end, let $h \in H_1$. We want to show that $\alpha(h) = fhf^{-1}$. For any $j \in \vM_2$ let $j_\star$ be its image in $\eta_{N \cap P}(\vM_2)$. In particular, we have that $\alpha_\star = f_\star$. Since $\Phi_5$ is an isomorphism it suffices to show that $(fhf^{-1})_\star = \alpha(h)_\star$.

Let $B$ be a basic subgroup of $N$. Without loss of generality we assume that $B$ is contained in $H_1$. We now have
\begin{align*}
(fh)_\star \big ( v_B \big )&= f_\star h_\star \big ( v_B \big )= \alpha_\star h_\star \big (v_B \big ) = \alpha_\star \big ( v_{h B h^{-1}} \big ) = v_{\alpha(hBh^{-1})}
\\ &=v_{\alpha(h) \alpha(B) \alpha(h^{-1})} = \alpha(h)_\star \big ( v_{\alpha(B)} \big ) = \alpha(h)_\star \alpha_\star \big ( v_B \big )
\\ &= \alpha(h)_\star f_\star \big ( v_B \big ) = (\alpha(h)f)_\star \big ( v_B \big ).
\end{align*}

It follows that $(fh)_\star = (\alpha(h)f)_\star$, that is, $\alpha(h) = fhf^{-1}$, as required. Thus, we have shown that $\Phi_6$ is a left inverse of $\Phi_5 \circ \Phi_4 \circ \Phi_3$. Since the diagram commutes however, we have that it is an isomorphism.  We can use this to prove that $\Phi_1$ is surjective, hence an isomorphism.  Let $\alpha \in \Aut N$, denote $\Phi_2(\alpha)$ by $[\alpha]$, and let $f = \Phi_5 \circ \Phi_4 \circ \Phi_3 ([\alpha])$. From the argument above we have that $\Phi_6(f) = [\alpha]$, that is, $[\alpha_f] = [\alpha]$. We will now show that $\alpha_f = \alpha$. Since $\vM_1$ is the normaliser of $N$ and the diagram commutes, this implies that $\alpha$ belongs to the image of $\Phi_1$ and that it is surjective.

We want to show that $\alpha(j)=fjf^{-1}$ for all $j \in N$. Since $[\alpha]=[\alpha_f]$, there is a finite index subgroup $G$ of $N$ such that the restriction of $\alpha$ to $G$ agrees with the restriction of $\alpha_f$ to $G$. Let $h$ be a pseudo-Anosov element of $N$. There is some $m > 0$ such that $h^m$ and $jh^mj^{-1}$ lie in $G$. We have that $\alpha(h^m)=fh^mf^{-1}$ and $\alpha(jh^mj^{-1})=fjh^mj^{-1}f^{-1}$ and so
\[
f j h^m j^{-1} f^{-1} = \alpha( j h^m j^{-1} ) = \alpha(j) \alpha(h^m) \alpha(j)^{-1} = \alpha(j) f h^m f^{-1} \alpha(j)^{-1}.
\]
Hence
\[
(f^{-1} \alpha(j)^{-1} f j ) h^m = h^m (f^{-1} \alpha(j)^{-1} f j ),
\]
and therefore $f^{-1} \alpha(j)^{-1} f j$ fixes the unstable foliation of $h$. As above, this implies that $f^{-1} \alpha(j)^{-1} f j$ is the identity. So $\alpha(j) = f j f^{-1}$, as desired.
\end{proof}

\bibliography{braidbib}{}

\begin{thebibliography}{10}

\bibitem{BellMarg1}
Robert Bell and Dan Margalit.
\newblock Braid groups and the co-{Hopfian} property.
\newblock {\em Journal of Algebra}, 303:275-294, 2006.

\bibitem{BellMarg2}
Robert Bell and Dan Margalit.
\newblock Injections of {Artin} groups.
\newblock {\em Commenarii Mathemetici Helvetici}, 82:725-751, 2007.

\bibitem{BH}
Joan~S. Birman and Hugh~M. Hilden.
\newblock On isotopies of homeomorphisms of {Riemann} surfaces.
\newblock {\em Annals of Mathematics}, 97:424-439, 1973.

\bibitem{BLM}
Joan~S. Birman, Alex Lubotzky, and John McCarthy.
\newblock Abelian and solvable subgroups of the mapping class groups.
\newblock {\em Duke Mathematics Journal}, (4) 50: 1107--1120, 1983.

\bibitem{Strongly}
Brian Bowditch.
\newblock Rigidity of the strongly separating curve graph.
\newblock {\em Michigan Mathematical Journal}, (65):813-832, 2016.

\bibitem{Sep}
Tara Brendle and Dan Margalit.
\newblock Commensurations of the {Johnson} kernel.
\newblock {\em Geometry and Topology}, (8):1361-1384, 2004.

\bibitem{TD}
Tara Brendle and Dan Margalit.
\newblock Normal subgroups of mapping class groups and the metaconjecture of
  {Ivanov}.
\newblock {\em pre-print}, 2017.

\bibitem{BMP}
Tara Brendle, Dan Margalit, and Andrew Putman.
\newblock Generators for the hyperelliptic {Torelli} group and the kernel of
  the {Burau} representation at t = -1.
\newblock {\em Inventiones Mathematicae}, 200(1):263-310, 2015.

\bibitem{ChCr}
Ruth Charney and John Crisp.
\newblock Automorphism groups of some affine and finite type {Artin} groups.
\newblock {\em Mathematical Research Letters}, 12:321--333, 2005.

\bibitem{LRC}
Leah~R. Childers.
\newblock The automorphism group of the hyperelliptic {Torelli} group.
\newblock {\em New York Journal of Mathematics}, 23:671-697, 2017.

\bibitem{DG}
J.L. Dyer and E.~K. Grossman.
\newblock The automorphism group of the braid groups.
\newblock {\em American Journal of Mathematics}, 103:1151-1169, 1981.

\bibitem{Meta}
Benson Farb.
\newblock Fifteen problems about the mapping class groups.
\newblock {\em Proceedings of Symposia in Pure Mathematics}, 74 - Problems on
  Mapping Class Groups and Related Topics, 2006.

\bibitem{GorLin}
E.~A. Gorin and V~Ja. Lin.
\newblock Algebraic equations with continuous coefficients and some problems of
  the algebraic theory of braids.
\newblock {\em Math. USSR Sbornik}, 7:569-596, 1969.

\bibitem{HAR}
W.~J. Harvey.
\newblock Boundary structure of the modular group.
\newblock {\em Annals of Mathematical Studies}, 97, 1981.

\bibitem{Nonsep}
Elmas Irmak.
\newblock Complexes of nonseparating curves and mapping class groups.
\newblock {\em Michigan Mathematical Journal}, (54):81-110, 2006.

\bibitem{Arc}
Elmas Irmak and John~D. McCarthy.
\newblock Injective simplicial maps of the arc complex.
\newblock {\em Turkish Journal of Mathematics}, (34):2055-2077, 2010.

\bibitem{TeichModGroups}
Nikolai~V. Ivanov.
\newblock {\em Subgroups of {Teichm\"{u}ller} Modular Groups}.
\newblock American Mathematical Society, 1992.

\bibitem{Ivanov}
Nikolai~V. Ivanov.
\newblock Automorphisms of complexes of curves and of {Teichm\"uller} spaces.
\newblock {\em International Mathematics Research Notices}, (14):651-666, 1997.

\bibitem{Kida}
Yoshikata Kida.
\newblock Automorphisms of the {Torelli} complex and the complex of separating
  curves.
\newblock {\em Journal of the Mathematical Society of Japan}, 63, 2011.

\bibitem{MK}
Mustafa Korkmaz.
\newblock Automorphisms of complexes of curves on punctured spheres and on
  punctured tori.
\newblock {\em Topology and its applications}, 95, 1999.

\bibitem{Arccurve}
Mustafa Korkmaz and Athanase Papadopoulos.
\newblock On the arc and curve complex.
\newblock {\em Mathematical Proceedings of the Cambridge Philosophical
  Society}, (148):473-483, 2010.

\bibitem{Triangulation}
Mustafa Korkmaz and Athanase Papadopoulos.
\newblock On the ideal triangulation graph of a punctured surface.
\newblock {\em Annalesde l'Institut Fourier}, (62):1367-1384, 2012.

\bibitem{LeinMarg}
Chris Leininger and Dan Margalit.
\newblock Abstract commensurators of braid groups.
\newblock {\em Journal of Algebra}, 299(2):447-455, 2006.

\bibitem{Luo}
Feng Luo.
\newblock Automorphisms of the complex of curves.
\newblock {\em Topology}, 39(2):283-298, 200.

\bibitem{Pants}
Dan Margalit.
\newblock Automorphisms of the pants complex.
\newblock {\em Geometry and Topology}, (8):1361-1384, 2004.

\bibitem{MCP}
John~D. McCarthy and Athanase Papadopolous.
\newblock Simplicial actions of mapping class groups.
\newblock {\em Handbook of Teichm\"uller theory}, 3, 2013.

\bibitem{Orevkov}
Stepan Orevkov.
\newblock Automorphism group of the commutator subgroup of the braid group.
\newblock {\em Annales de la Facult\'e des Sciences de Toulouse}, to appear.

\bibitem{PutmanConnect}
Andrew Putman.
\newblock A note on the connectivity of certain complexes associated to
  surfaces.
\newblock {\em L{'}Enseignement Math\'{e}matique}, (2) 54: 287-301, 2008.

\end{thebibliography}
\bibliographystyle{plain}

\end{document}